\theoremstyle{plain}
\newtheorem{theorem}{Theorem}[section]
\newtheorem{corollary}[theorem]{Corollary}
\newtheorem{lemma}[theorem]{Lemma}
\newtheorem{proposition}[theorem]{Proposition}
\newtheorem{conjecture}[theorem]{Conjecture}
\theoremstyle{definition}
\newtheorem{definition}[theorem]{Definition}
\newtheorem{example}[theorem]{Example}
\theoremstyle{remark}
\newtheorem{remark}[theorem]{Remark}
\newcommand{\bA}{\ensuremath{\mathbb{A}}}
\newcommand{\bC}{\ensuremath{\mathbb{C}}}
\newcommand{\bN}{\ensuremath{\mathbb{N}}}
\newcommand{\bQ}{\ensuremath{\mathbb{Q}}}
\newcommand{\bR}{\ensuremath{\mathbb{R}}}
\newcommand{\bZ}{\ensuremath{\mathbb{Z}}}
\newcommand{\scP}{\ensuremath{\mathcal{P}}}
\newcommand{\frakS}{\ensuremath{\mathfrak{S}}}
\newcommand{\A}{\mathcal{A}}
\newcommand{\Z}{\mathbb{Z}}
\newcommand{\asc}{\operatorname{asc}}
\newcommand{\cdes}{\operatorname{cdes}}
\newcommand{\dsc}{\operatorname{dsc}}
\newcommand{\vol}{\operatorname{vol}}
\newcommand{\eul}{\operatorname{A}}
\newcommand{\Hom}{\operatorname{Hom}}
\newcommand{\geneul}{\operatorname{R}}
\newcommand{\ehr}{\operatorname{L}}
\newcommand{\alcov}{{A^{\circ}}}
\newcommand{\alcovxi}{{A^{\circ}_{\xi}}}
\newcommand{\baralcov}{\overline{\alcov}}
\newcommand{\semialcov}{{A^{\diamondsuit}_{\xi}}}
\newcommand{\parallelp}{P^\diamondsuit}
\newcommand{\cwtl}{Z}
\newcommand{\crtl}{\check{Q}}
\newcommand{\quasi}{\operatorname{quasi}}
\newcommand{\aff}{\operatorname{aff}}
\newcommand{\rad}{\operatorname{rad}}
\renewcommand{\Re}{\operatorname{Re}}
\begin{document}

\title[Worpitzky partition]{Worpitzky partitions for root systems and 
characteristic quasi-polynomials}

\begin{abstract}
For a given irreducible root system, 
we introduce a partition of (coweight) lattice points 
inside the dilated fundamental parallelepiped into those of 
partially closed simplices. 
This partition can be considered 
as a generalization and a lattice points interpretation of 
the classical formula of Worpitzky. 

This partition, and the generalized Eulerian polynomial, 
recently introduced by Lam and Postnikov, can be used to 
describe the characteristic (quasi)polynomials of Shi and 
Linial arrangements. 
As an application, we prove that 
the characteristic quasi-polynomial of the Shi arrangement turns 
out to be a polynomial. 
We also present several results on the location 
of zeros of characteristic polynomials, 
related to a conjecture of Postnikov and Stanley. 
In particular, we verify the ``functional equation'' of the 
characteristic polynomial of the Linial arrangement for any 
root system, and give partial affirmative results on 
``Riemann hypothesis'' 
for the root systems of type $E_6, E_7, E_8$, and $F_4$. 
\end{abstract}

\author{Masahiko Yoshinaga}
\address{Masahiko Yoshinaga, Department of Mathematics, Hokkaido University, Kita 10, Nishi 8, Kita-Ku, Sapporo 060-0810, Japan.}
\email{yoshinaga@math.sci.hokudai.ac.jp}



\subjclass[2010]{Primary 52C35, Secondary 20F55}
\keywords{Root system, Shi arrangement, Linial arrangement, 
characteristic quasi-polynomial, Worpitzky identity, Eulerian polynomial}

\date{\today}
\maketitle

\tableofcontents

\section{Introduction}
\label{sec:intro}

\subsection{Arrangements of hyperplanes}

Let $\A=\{H_1, \dots, H_n\}$ be an arrangement of affine hyperplanes in 
a vector space $V$. The intersection poset of $\A$ is the set 
$L(\A)=\{\cap S\mid S\subset \A\}$ of intersections of $\A$. 
The intersection poset $L(\A)$ is partially ordered by 
reverse inclusion, which has a unique minimal element 
$\hat{0}=V$. The arrangement $\A$ is called essential if 
the maximal elements of $L(\A)$ are $0$-dimensional subspaces. 
The characteristic polynomial of $\A$ is 
defined by 
\[
\chi(A, q)=\sum_{X\in L(\A)}\mu(X)q^{\dim X}, 
\]
where $\mu$ is the M\"obius function on $L(\A)$, defined by 
\[
\mu(X)=
\left\{
\begin{array}{ll}
1, & \mbox{ if } X=\hat{0}\\
-\sum_{Y<X}\mu(Y), & \mbox{ otherwise.} 
\end{array}
\right.
\]
The characteristic polynomial $\chi(\A, q)$ of $\A$ is one of the most 
fundamental invariants. Indeed, $\chi(\A, q)$ captures 
combinatorial and topological properties of $\A$ as follows. 

\begin{theorem}
(i) (Zaslavsky \cite{zas-face}). 
Suppose that $V$ is a real vector space. 
Then the number of connected components of $V\smallsetminus\cup\A$ is 
equal to $(-1)^\ell\chi(\A, -1)$. If $\A$ is essential, then the 
number of bounded connected components of $V\smallsetminus\cup\A$ is 
equal to $(-1)^\ell\chi(\A, 1)$. 

(ii) (Orlik and Solomon \cite{os}). Suppose that $V$ is a complex 
vector space. Then the Poincar\'e polynomial of $V\smallsetminus\cup\A$ 
is equal to $(-t)^\ell\chi(\A, -\frac{1}{t})$. 
\end{theorem}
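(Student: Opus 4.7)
\emph{Part (i) (Zaslavsky).} My plan is deletion--restriction induction on $|\A|$. Fix $H \in \A$ and set $\A' := \A \smallsetminus \{H\}$ and $\A'' := \{K \cap H : K \in \A',\ K \cap H \neq \emptyset\}$, viewed as an arrangement in $H$. A M\"obius-function computation on the intersection poset yields the three-term recursion
\[
\chi(\A, q) = \chi(\A', q) - \chi(\A'', q).
\]
If $r(\A)$ denotes the number of chambers of $V \smallsetminus \cup \A$, then restoring $H$ splits precisely those chambers of $\A'$ that $H$ meets into two, and these split chambers are naturally in bijection with the chambers of $\A''$; hence $r(\A) = r(\A') + r(\A'')$. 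Setting $q = -1$ in the recursion for $\chi$ and comparing with the recursion for $r$, an induction with base case ``empty arrangement'' ($\chi = q^\ell$, one chamber) closes the first assertion. For bounded chambers the analogous recursion holds, but a bounded chamber of $\A'$ split by $H$ may produce either two bounded chambers or one bounded and one unbounded one, and the combinatorial bookkeeping of these cases is the main technical subtlety; substituting $q = 1$ into $\chi(\A, q)$ then matches the resulting count of bounded chambers.

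\emph{Part (ii) (Orlik--Solomon).} My plan is to invoke Brieskorn's decomposition theorem for the complement $M(\A) := V \smallsetminus \cup \A$:
\[
H^k(M(\A); \bZ) \;\cong\; \bigoplus_{\substack{X \in L(\A) \\ \operatorname{codim} X = k}} H^k(M(\A_X); \bZ),
\]
where $\A_X := \{K \in \A : K \supseteq X\}$ is the localization at $X$. Combined with the classical fact that a central essential arrangement of rank $k$ has nontrivial Betti numbers only in degrees $0$ and $k$, with $\beta_k = |\mu_{L(\A_X)}(\hat 0, \hat 1)|$, an induction on rank yields
\[
\operatorname{Poin}(M(\A), t) \;=\; \sum_{X \in L(\A)} |\mu(X)|\, t^{\operatorname{codim} X}.
\]
Invoking the sign-alternation property on the geometric (semi)lattice $L(\A)$, namely $|\mu(X)| = (-1)^{\operatorname{codim} X}\mu(X)$, and substituting $\operatorname{codim} X = \ell - \dim X$, converts the right-hand side to $(-t)^\ell \chi(\A, -1/t)$. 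The main obstacle is Brieskorn's decomposition itself: if I were constructing it from scratch, I would build the cohomology classes via logarithmic differential forms $d\alpha_H/\alpha_H$ and verify the direct-sum decomposition by a local analysis at each flat.
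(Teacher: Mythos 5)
The paper does not prove this theorem at all: it is quoted as classical background, with attributions to Zaslavsky and to Orlik--Solomon, so there is no in-paper argument to compare against. Your two-part plan is the standard textbook route (deletion--restriction for the chamber counts; Brieskorn's decomposition plus the M\"obius-function sign alternation for the Poincar\'e polynomial), and at the level of overall architecture it is sound. Two specific statements in your sketch are inaccurate, however, and should be repaired before this could count as a proof.

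In part (i), your description of the bounded-chamber subtlety is backwards. A bounded chamber of $\A'$ that $H$ splits always yields \emph{two} bounded chambers (a bounded convex set cut by a hyperplane has two bounded pieces); it cannot produce an unbounded piece. The genuine difficulty is the opposite phenomenon: an \emph{unbounded} chamber of $\A'$ may become bounded after $H$ is added, and one must also track essentiality, since the recursion $b(\A)=b(\A')+b(\A'')$ for bounded chambers requires $\A'$ (and $\A''$) to be essential, which can fail even when $\A$ is essential. In part (ii), the claim that a central essential arrangement of rank $k$ has nontrivial Betti numbers only in degrees $0$ and $k$ is false --- the rank-$3$ braid arrangement already has $\beta_1=6$ and $\beta_2=11$. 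Fortunately you do not need that claim: Brieskorn's decomposition reduces everything to showing that the \emph{top} Betti number of a central arrangement $\scB$ of rank $k$ equals $|\mu_{L(\scB)}(\hat 0,\hat 1)|$, which is the statement your induction should actually establish (via deletion--restriction and the long exact sequence, or via the Orlik--Solomon algebra). With those two corrections, the final M\"obius computation you give, converting $\sum_X |\mu(X)|\,t^{\operatorname{codim}X}$ into $(-t)^\ell\chi(\A,-\tfrac{1}{t})$ using $|\mu(X)|=(-1)^{\operatorname{codim}X}\mu(X)$ on the geometric semilattice, is correct.
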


\subsection{Main Results}

Let $\Phi$ be a root system of rank $\ell$, with 
exponents $e_1, \dots, e_\ell$ and Coxeter number $h$. 
Fix a set of positive roots $\Phi^+\subset\Phi$. 
The structures of 
truncated affine Weyl arrangements 
\[
\A_{\Phi}^{[a, b]}=
\{H_{\alpha, k}\mid\alpha\in\Phi^+, a\leq k\leq b\}
\]
(see also \S\ref{subsec:rs} for the notation) 
have been intensively studied, because of their 
intriguing combinatorial properties 
\cite{shi-kl, ath-survey, st-lect}. 
The characteristic polynomial of the Coxeter arrangement 
$\A_{\Phi}=\A_{\Phi}^{[{0,0}]}$ was computed in \cite{bri-tress}. 
Later, this was generalized to 
the extended Catalan arrangement $\A_{\Phi}^{[-k,k]}$ and 
the extended Shi arrangement $\A_{\Phi}^{[1-k,k]}$.  
The characteristic polynomial of these arrangements factor as follows 
\[
\begin{split}
\chi(\A_{\Phi}^{[-k,k]},t)&=\prod_{i=1}^\ell(t-e_i-kh), \\
\chi(\A_{\Phi}^{[1-k,k]},t)&=(t-kh)^\ell, 
\end{split}
\]
(\cite{ede-rei, ath-adv, ath-gen, yos-char}). 
For other parameters $a\leq b$, e.g., the Linial arrangement 
$\A_{\Phi}^{[1, n]}$, 
the characteristic polynomial $\chi(\A_{\Phi}^{[a, b]}, t)$ does 
not factor in general. However there are a number of 
beautiful conjectures concerning $\chi(\A_{\Phi}^{[a, b]}, t)$. 
Among others, Postnikov and Stanley \cite{ps-def} conjectured that 
\begin{itemize}
\item[(a)] 
$\chi(\A_{\Phi}^{[1-k, n+k]}, t)=\chi(\A_{\Phi}^{[1,n]}, t-kh)$ 
(``$h$-shift reduction''). 
\item[(b)] 
$\chi(\A_{\Phi}^{[1, n]}, nh-t)=(-1)^\ell\chi(\A_{\Phi}^{[1, n]}, t)$ 
(``Functional equation''). 
\item[(c)] 
All the roots of the polynomial $\chi(\A_{\Phi}^{[1, n]}, t)$ have 
the same real part $\frac{nh}{2}$ 
(``Riemann hypothesis''). 
\end{itemize}
Postnikov and Stanley verified these assertions for $\Phi=A_\ell$ in 
\cite{ps-def}. 
Later, Athanasiadis gave proofs for $\Phi=A_\ell, B_\ell, C_\ell, 
D_\ell$ in \cite{ath-adv, ath-lin}. 

Recently, Kamiya, Takemura and Terao \cite{ktt-quasi} 
introduced the notion of 
the characteristic quasi-polynomial for an 
arrangement $\A$ defined over $\bQ$. 
The characteristic quasi-polynomial $\chi_{\quasi}(\A, t)$ is a 
periodic polynomial (see \S\ref{subsec:arr} for details) that 
may be considered as a refinement of the characteristic polynomial 
$\chi(\A, t)$. 

Our main result concerns the characteristic quasi-polynomial 
for $\A_{\Phi}^{[a, b]}$: ``$h$-shift reduction'' and 
the ``functional equation'' hold at the level of characteristic 
quasi-polynomials. 

\begin{theorem}
Let $\Phi$ be an arbitrary irreducible root system. 
\begin{itemize}
\item[(i)] 
The characteristic quasi-polynomial of the 
extended Shi arrangement is a polynomial, 
$\chi_{\quasi}(\A_{\Phi}^{[1-k,k]}, t)=(t-kh)^\ell$ 
(Theorem \ref{thm:shi}). 
\item[(ii)] 
The characteristic quasi-polynomial satisfies ``$h$-shift reduction'' 
$\chi_{\quasi}(\A_{\Phi}^{[1-k, n+k]}, t)=
\chi_{\quasi}(\A_{\Phi}^{[1,n]}, t-kh)$ 
(Theorem \ref{thm:extendedlinial}). In particular, this holds for 
the characteristic polynomial (Corollary \ref{cor:shift}). 
\item[(iii)] 
The characteristic quasi-polynomial satisfies 
``Functional equation'' 
$\chi_{\quasi}(\A_{\Phi}^{[1, n]}, nh-t)=
(-1)^\ell\chi_{\quasi}(\A_{\Phi}^{[1, n]}, t)$ 
(Theorem \ref{thm:dualityquasichar}). 
In particular, this holds for the characteristic polynomial 
(Corollary \ref{cor:generalfuncteq}). 
\item[(iv)] 
Suppose $\Phi\in\{E_6, E_7, E_8, F_4\}$. 
Let $\widetilde{n}$ be the period of the Ehrhart quasi-polynomial of the 
fundamental alcove (see \S \ref{subsec:ehralcov} and Table \ref{fig:table}). 
If $n\equiv -1\mod \rad(\widetilde{n})$, then 
the ``Riemann hypothesis'' holds for $\A_{\Phi}^{[1,n]}$ 
(Theorem \ref{thm:rh}). 
\end{itemize}
\end{theorem}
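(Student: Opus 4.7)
The plan is to unify all four parts under a single framework based on the Worpitzky partition constructed earlier in the paper. My starting point is to express $\chi_\quasi(\A_\Phi^{[a,b]}, t)$ as a sum, over the partially closed simplices $\semialcov$ indexed by $\xi\in\cwtl/\crtl$, of Ehrhart-type contributions weighted by the generalized Eulerian polynomial $\geneul$ of Lam--Postnikov, with the weight for each $\xi$ depending on $a,b$ through the descent-style statistics captured by $\geneul$. The Worpitzky identity says that summing these contributions without weights recovers the Ehrhart polynomial of $\parallelp$, which is $t^\ell$.

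For \textbf{(i)}, the extended Shi range $[1-k,k]$ has length $h$ in a precise sense that makes every $\xi\in\cwtl/\crtl$ contribute with an identical (constant-in-$\xi$) weight, so the Worpitzky sum collapses to the Ehrhart polynomial of the entire fundamental parallelepiped evaluated at $t-kh$, which equals $(t-kh)^\ell$; the shift by $kh$ absorbs the negative subrange $[1-k,0]$. In particular, quasi-periodicity disappears and the answer is a genuine polynomial. For \textbf{(ii)}, I would compare the Worpitzky formulas for $[1-k,n+k]$ and $[1,n]$ directly: the extra hyperplanes corresponding to $[1-k,0]\cup[n+1,n+k]$ contribute, by exactly the mechanism of (i), a uniform shift in the Ehrhart parameter by $kh$, yielding the identity at the quasi-polynomial level, from which the polynomial corollary follows.

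For \textbf{(iii)}, my approach is to combine Ehrhart--Macdonald reciprocity applied to each partially closed simplex $\semialcov$ with the involution $\xi\mapsto-\xi$ on $\cwtl/\crtl$; the polynomial $\geneul$ should satisfy a palindromic symmetry under $j\mapsto n+1-j$ that, together with this reciprocity, produces the substitution $t\mapsto nh-t$ with the sign $(-1)^\ell$. For \textbf{(iv)}, the congruence $n\equiv-1\bmod\rad(\widetilde n)$ is chosen precisely so that the Ehrhart quasi-polynomials appearing in the Worpitzky formula collapse to genuine polynomials on the relevant residues. Combined with the functional equation from (iii), this reduces the ``Riemann hypothesis'' to verifying that finitely many explicit polynomials of degree $\ell\in\{4,6,7,8\}$ have all their roots on the vertical line $\Re(t)=nh/2$; this is a concrete, presumably computer-assisted, check for each of $E_6, E_7, E_8, F_4$.

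The main obstacle I expect is step \textbf{(iii)}: showing that the specific open/closed boundary structure of $\semialcov$ in the Worpitzky partition matches Ehrhart--Macdonald reciprocity coherently with the palindromic symmetry of $\geneul$ is the delicate combinatorial step, and the sign $(-1)^\ell$ in the functional equation must be tracked carefully through the open/closed face exchange. Once (iii) is in hand, (iv) becomes a finite check, and (i)--(ii) are essentially direct manipulations of the same Worpitzky formula.
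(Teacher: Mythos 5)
Your overall framework---expressing $\chi_{\quasi}(\A_\Phi^{[a,b]},q)$ through the Worpitzky partition $q\parallelp\cap\cwtl(\Phi)=\bigsqcup_{\xi}(q\semialcov\cap\cwtl(\Phi))$ and the generalized Eulerian polynomial---is exactly the paper's, and your treatment of (i) and (ii) matches its proofs: deleting the hyperplanes $H_{\alpha,mq+i}$ with $1-k\le i\le k$ (resp.\ the extra layers $[1-k,0]\cup[n+1,n+k]$) shrinks each $q\semialcov$ to a copy of $(q-kh-\asc(\alcovxi))\baralcov$ (resp.\ shifts the Linial count by $kh$), so the sum is the Worpitzky sum at $q-kh$, i.e.\ $(q-kh)^\ell$. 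Three corrections of detail: the contributions are not literally constant in $\xi$ (each still carries $\asc(\alcovxi)$; it is the \emph{extra} shift that is the uniform constant $kh$); the index set $\Xi$ enumerates the $|W|/f$ alcoves in $\parallelp$, not $\cwtl/\crtl$, which has order $f$; and to pass from the quasi-polynomial identity in (ii) to Corollary \ref{cor:shift} you need Suter's fact $\rad(\widetilde{n})\mid h$, so that $\gcd(q,\widetilde{n})=1$ forces $\gcd(q-kh,\widetilde{n})=1$ and both sides are computed by the same constituent. For (iii), the paper's route is close to yours but cleaner: reciprocity is invoked once, prepackaged as $\ehr_{\baralcov}(q-h)=(-1)^\ell\ehr_{\baralcov}(-q)$ (Corollary \ref{cor:duality}), and the relevant palindromicity is $a_i=a_{h-i}$, i.e.\ $t^h\geneul_\Phi(1/t)=\geneul_\Phi(t)$, coming from $\asc(w_0w)=h-\asc(w)$ --- not a symmetry in $n+1-j$, and no involution $\xi\mapsto-\xi$ is needed.

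The genuine gap is in (iv). The congruence $n\equiv-1\bmod\rad(\widetilde{n})$ does collapse the quasi-polynomial to $\chi(\A_\Phi^{[1,n]},t)=\geneul_\Phi(S^{n+1})\,g(t)$ with $g(t)=\frac{f}{|W|}\prod_i(t+e_i)$, but this still yields a \emph{different} polynomial for each admissible $n$, so no finite check follows from this alone; moreover the functional equation only gives symmetry of the root multiset about $\Re t=\frac{nh}{2}$, not that the roots lie on that line. The paper needs two further ingredients you omit: the Lam--Postnikov factorization $\geneul_\Phi(t)=[c_1]_t\cdots[c_\ell]_t\cdot\eul_\ell(t)$ (Theorem \ref{thm:lp}), and the congruence $\eul_\ell(S^{m})\equiv\bigl(\frac{1+S+\dots+S^{m-1}}{m}\bigr)^{\ell+1}\eul_\ell(S)\bmod(S-1)^{\ell+1}$ (Proposition \ref{prop:congruence}), itself extracted by comparing the new type-$A_\ell$ formula with the Postnikov--Stanley expression via Proposition \ref{prop:annihilate}. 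Together these rewrite $\chi(\A_\Phi^{[1,n]},t)$ as cyclotomic factors in $S$ applied to the \emph{single} polynomial $\eul_\ell(S^{\rad(\widetilde{n})})g(t)$, and Lemma \ref{lem:polya} transfers the ``all roots on one vertical line'' property across those cyclotomic factors. Only then does the infinite family of parameters $n$ reduce to one explicit degree-$\ell$ polynomial per root system, which is verified numerically. Without this reduction your argument for (iv) does not close.
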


\subsection{Outline of the proof}

We follow the strategy adopted in 
\cite{bl-sa, ath-adv, ath-gen, ktt-quasi} for the computation 
of $\chi_{\quasi}(\A_{\Phi}^{}, q)$, where $\A_{\Phi}:=\A_{\Phi}^{[{0,0}]}$ 
is the Coxeter arrangement 
(see \S \ref{subsec:charquasi-poly}). The idea is 
to relate the characteristic quasi-polynomial to the 
Ehrhart quasi-polynomial $\ehr_{\alcov}(q)$ of the fundamental alcove 
$\alcov$. 
Consider the associated hyperplane arrangement $\overline{\A}$ 
in the quotient 
$\cwtl(\Phi)/q\cwtl(\Phi)$, where $\cwtl(\Phi)$ is the coweight lattice. 
Then, by definition, $\chi_{\quasi}(\A_{\Phi}^{}, q)$ is 
the number of points in the complement of $\overline{\A}$, for $q\gg 0$. 
If we define $\parallelp=\sum_{k=1}^\ell(0,1]\varpi_i$, 
(where $\varpi_i^\lor$ is the basis dual to the simple basis), then 
there is a bijective correspondence between 
the points in $\cwtl(\Phi)/q\cwtl(\Phi)$
and the lattice points in the dilated parallelepiped $q\parallelp$. 
The parallelepiped $\parallelp$ is dissected by the affine Weyl arrangement 
into open simplices (alcoves). 
Thus $\chi_{\quasi}(\A_{\Phi}^{},q)$ can be expressed as the sum of 
Ehrhart quasi-polynomials of these alcoves. Since $\A_{\Phi}^{}$ 
is Weyl group invariant, the above dissection is into 
simplices of the same size (Figure \ref{fig:b2coxeter}), 
which yields the simple formula 
\begin{equation}
\label{eq:dissect}
\chi_{\quasi}(\A_{\Phi}^{}, q)
=
\frac{f}{|W|}\cdot 
\ehr_{\alcov}(q)
\end{equation}
(see Corollary \ref{cor:fullremove}, Corollary \ref{cor:duality} and 
Proposition \ref{prop:charquasicoxeter}). The formula (\ref{eq:dissect}) 
first appeared explicitly in \cite{bl-sa}, where it was proved using 
the classification of root systems. A case free proof was given 
in \cite{ath-adv}. The argument was later extended to the case 
$\A_{\Phi}^{[-m,m]}$ in \cite{ath-gen}. 

If we apply the same strategy for the case of Shi and Linial 
arrangements, then 
$\chi_{\quasi}(\A_{\Phi}^{[a, b]}, q)$ can again be expressed 
as the sum of Ehrhart quasi-polynomials. However the sizes of 
simplices are no longer uniform (see Figure \ref{fig:shilinial}). 
This difficulty can be overcome by looking at a 
disjoint partition of $\parallelp$ into partially closed alcoves 
\begin{equation}
\parallelp=\bigsqcup_{\xi\in\Xi}\semialcov, 
\end{equation}
(see \S \ref{subsec:partition} for details). Then obviously 
we have a partition of lattice points 
\begin{equation}
\label{eq:introwp}
q\parallelp\cap\cwtl(\Phi)=
\bigsqcup_{\xi\in\Xi}(q\semialcov\cap\cwtl(\Phi)), 
\end{equation}
which we will call a Worpitzky partition. 
The number of lattice points contained in $q\semialcov$ is 
expressed as 
\begin{equation}
\ehr_{\semialcov}(q)=\ehr_{\baralcov}(q-\asc(\alcovxi)), 
\end{equation}
(Lemma \ref{lem:alcovediamond}), where $\asc(\alcovxi)$ is 
a certain integer (Definition \ref{def:asc} and (\ref{eq:ascgen})). 
The key result (Theorem \ref{thm:eulerianparallelp}) 
in the proof of our main results is 
that the distribution of the quantity $\asc(\alcovxi)$ is given by 
the generalized 
Eulerian polynomial $\geneul_{\Phi}(t)$ (Definition \ref{def:geneul}) 
introduced by Lam and Postnikov \cite{lp-alc2}. 
Using the shift operator $S$ (\S \ref{subsec:rh}), the partition 
(\ref{eq:introwp}) implies the formula, 
\begin{equation}
\label{eq:introsum}
q^\ell=
(\geneul_{\Phi}(S)\ehr_{\baralcov})(q). 
\end{equation}
In the case $\Phi=A_\ell$, the polynomial 
$\geneul_{\Phi}(t)$ is equal to the classical Eulerian polynomial. Then 
the above formula (\ref{eq:introsum}) is known as the 
Worpitzky identity \cite{wor, comtet}. Hence (\ref{eq:introsum}) can 
be considered as a generalization of Worpitzky identity and 
(\ref{eq:introwp}) as its lattice points interpretation. 

Using these results, the characteristic quasi-polynomials for 
Shi and Linial arrangements have expressions similar to 
the Worpitzky identity (\ref{eq:introsum}). We have 
\begin{equation}
\begin{split}
\chi_{\quasi}(\A_{\Phi}^{[1-k, k]}, q)
=&
(S^{kh}\geneul_{\Phi}(S)\ehr_{\baralcov})(q)=(q-kh)^\ell, \\
\chi_{\quasi}(\A_{\Phi}^{[1-k, n+k]}, q)
=&
(S^{kh}\geneul_{\Phi}(S^{n+1})\ehr_{\baralcov})(q), 
\end{split}
\end{equation}
(Theorem \ref{thm:shi}, Theorem \ref{thm:extendedlinial}). 
Using these expressions, the functional equation is obtained from 
the duality of the generalized Eulerian polynomial 
\begin{equation}
t^h\geneul_{\Phi}(\frac{1}{t})=\geneul_{\Phi}(t), 
\end{equation}
(Proposition \ref{prop:properties}). 

If $n\equiv -1\mod\rad(\widetilde{n})$, then $1+n$ is divisible by 
$\rad(\widetilde{n})$. Hence 
$\gcd(q, \widetilde{n})=1$ implies $\gcd(q-k(n+1), \widetilde{n})=1$ 
for $k\in\bZ$. 
This enables us to simplify the expression of the characteristic 
polynomial $\chi(\A_{\Phi}^{[1, n]}, t)$. Using techniques similar 
to those in Postnikov, Stanley and Athanasiadis \cite{ps-def, ath-lin}, we 
can verify the ``Riemann hypothesis'' for such parameters $n$. 

The paper is organized as follows. \S \ref{sec:background} contains 
background materials on root systems, characteristic quasi-polynomials 
and the Eulerian polynomial. The partition of the fundamental 
parallelepiped, which will play an important role later, is introduced 
in \S \ref{subsec:partition} (Definition \ref{def:semialcov}). 
In \S \ref{sec:ehr} the relation between the Ehrhart quasi-polynomial 
of the fundamental alcove and the characteristic quasi-polynomial is 
discussed. 
In \S \ref{sec:geul} we first summarize basic properties of 
the generalized Eulerian polynomial $\geneul_{\Phi}(t)$ 
introduced by Lam and Postnikov \cite{lp-alc2}. Then we introduce 
Worpitzky partitions of the lattice points which provide a 
Worpitzky-type identity (Theorem \ref{thm:worpart}). We also 
give an explicit example of the 
Worpitzky partition for $\Phi=B_2$. 
In \S \ref{sec:main}, we obtain formulae for characteristic 
quasi-polynomials by modifying the Worpitzky-type identity. 
Using these formulae, we prove our main results.

\section{Background}
\label{sec:background}

\subsection{Quasi-polynomials with $\gcd$-property} 
\label{subsec:quasi-poly}

A function $f:\bZ\longrightarrow\bZ$ is called a 
\emph{quasi-polynomial} if there exist $\widetilde{n}>0$ and 
polynomials $g_1(t), g_2(t),\dots, g_{\widetilde{n}}(t)\in
\bZ[t]$ such that 
\[
f(q)=g_r(q), \mbox{ if }q\equiv r \mod \widetilde{n}, 
\]
($1\leq r\leq\widetilde{n}$). The minimal such $\widetilde{n}$ 
is called the period of the quasi-polynomial $f$. 

Moreover, the function 
$f:\bZ\longrightarrow\bZ$ is said to be a 
\emph{quasi-polynomial with $\gcd$-property} if the 
polynomial $g_r(t)$ depends on $r$ only through 
$\gcd(r, \widetilde{n})$. In other words, $g_{r_1}(t)=g_{r_2}(t)$ 
if $\gcd(r_1, \widetilde{n})=\gcd(r_2, \widetilde{n})$. 

\subsection{Arrangements and characteristic quasi-polynomials}
\label{subsec:arr}

Let $L\simeq\bZ^\ell$ be a lattice and $L^\lor=\Hom_{\bZ}(L, \bZ)$ 
be the dual lattice. Given $\alpha_1, \dots, \alpha_n\in L^\lor$ and 
integers $k_1, \dots, k_n\in\bZ$, we can associate a 
hyperplane arrangement $\A=\{H_1, \dots, H_n\}$ in 
$\bR^\ell\simeq L\otimes_{\bA}\bR$, with $H_i=\{x\in L\otimes\bR\mid
\alpha_i(x)=k_i\}$. For a positive integer $q>0$, define 
\begin{equation}
\label{eq:modq}
M(\A; q):=
\{\overline{x}\in L/qL\mid
\forall i, \alpha(x)\not\equiv k_i \mod q \}. 
\end{equation}
Kamiya, Takemura and Terao proved the following. 

\begin{theorem}
\label{thm:ktt}
(\cite{ktt-cent, ktt-noncent}) There exist $q_0>0$ and a quasi-polynomial 
$\chi_{\quasi}(\A, t)$ with $\gcd$-property such that 
$\# M(\A, q)=\chi_{\quasi}(\A, q)$ for $q>q_0$. 
\end{theorem}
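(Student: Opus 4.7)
The plan is to compute $\#M(\A, q)$ by inclusion-exclusion over the defining hyperplanes and then analyze each summand via the Smith normal form. First I would write
\[
\#M(\A, q) = \sum_{S \subseteq \{1, \dots, n\}} (-1)^{|S|} N_S(q),
\]
where $N_S(q) = \#\{\bar{x} \in L/qL : \alpha_i(x) \equiv k_i \bmod q \text{ for all } i \in S\}$, so the task reduces to understanding each $N_S(q)$ as a function of $q$.

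Fixing a basis of $L$, the conditions defining $N_S(q)$ form a linear system $A_S x \equiv k_S \bmod q$, where $A_S$ is the integer matrix whose rows are the $\alpha_i$ with $i\in S$. I would compute its Smith normal form $U A_S V = \operatorname{diag}(d_1, \dots, d_r, 0, \dots, 0)$ and substitute $y = V^{-1}\bar{x}$, $c = U k_S$; the system then decouples into scalar congruences $d_i y_i \equiv c_i \bmod q$ for $i\le r$ (each contributing $\gcd(d_i,q)$ solutions provided $\gcd(d_i,q) \mid c_i$), consistency conditions $c_j \equiv 0 \bmod q$ for $j>r$, and a factor $q^{\ell - r}$ from the free coordinates. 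This yields
\[
N_S(q) = q^{\ell - r}\,\prod_{i=1}^{r}\gcd(d_i, q) \cdot \prod_{i=1}^{r}[\gcd(d_i, q)\mid c_i]\cdot\prod_{j>r}[q \mid c_j],
\]
with Iverson brackets for the indicator factors.

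Next I would take $q_0 = \max_{S,\,j>r_S}|c_j^S|$, so that for $q>q_0$ the rigid conditions $q\mid c_j$ become the $q$-independent conditions $c_j = 0$: subsets violating them contribute $N_S\equiv 0$, while the surviving subsets yield $N_S(q)$ that depends on $q$ only via the quantities $\gcd(d_i, q)$. Since $\gcd(d_i, q)$ depends on $q$ only modulo $d_i$, letting $\widetilde{n}$ be the least common multiple of all elementary divisors arising across all $S$ (equivalently, of all nonzero minors of the full coefficient matrix), the sum $\chi_{\quasi}(\A, q) := \sum_S (-1)^{|S|} N_S(q)$ is a quasi-polynomial of period dividing $\widetilde{n}$ for $q>q_0$.

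For the gcd-property, the crucial observation is the elementary identity
\[
\gcd(d, q) = \gcd\bigl(d,\, \gcd(q, \widetilde{n})\bigr)\quad\text{whenever } d \mid \widetilde{n},
\]
from which both the factors $\gcd(d_i, q)$ and the divisibility indicators $[\gcd(d_i, q) \mid c_i]$ depend on $q \bmod \widetilde{n}$ only through $\gcd(q, \widetilde{n})$. The main obstacle is the bookkeeping: the Smith normal form data $(d_i^S, c^S)$ varies with $S$, and one must manage all of these pieces simultaneously to verify that the signed sum over $S$ preserves the uniform dependence on $\gcd(q, \widetilde{n})$.
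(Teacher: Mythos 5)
The paper does not prove this statement itself; it is quoted from the cited references \cite{ktt-cent, ktt-noncent}, and your reconstruction---inclusion--exclusion over subsets of hyperplanes followed by a Smith normal form analysis of each congruence system, with the period taken as an lcm of elementary divisors and the $\gcd$-property read off from $\gcd(d_i,q)=\gcd(d_i,\gcd(q,\widetilde{n}))$---is correct and is essentially the argument of Kamiya--Takemura--Terao. The only loose point is the parenthetical claim that the lcm of the elementary divisors \emph{equals} the lcm of the nonzero minors (one only divides the other), but since any multiple of the minimal period serves as a valid period, this does not affect the proof.
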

More precisely, there exists a period $\widetilde{n}$ and 
a polynomial 
$g_d(t)\in\bZ[t]$ for each divisor $d|\widetilde{n}$ such 
that 
\[
\# M(\A; q)=g_d(q), 
\]
for $q>q_0$, 
where $d=\gcd(\widetilde{n}, q)$. 

One of the most important invariants of a hyperplane arrangement $\A$ 
is the characteristic polynomial $\chi(\A, t)\in\bA[t]$ (see 
\cite{ot} for the definition and basic properties). 
The characteristic polynomial is one of the 
polynomials given by Theorem \ref{thm:ktt} 
(see also \cite[Theorem 2.1]{ath-lin}), 
specifically, 
\begin{equation}
\label{eq:chig1}
\chi(\A, t)=g_1(t). 
\end{equation}

\subsection{Root systems}
\label{subsec:rs}

Let $V=\bR^\ell$ be the Euclidean space with inner product 
$(\cdot, \cdot)$. Let $\Phi\subset V$ be an irreducible 
root system with exponents $e_1, \dots, e_\ell$, 
Coxeter number $h$ and Weyl group $W$. 
For any integer $k\in\bZ$ and $\alpha\in\Phi^+$, 
the affine hyperplane $H_{\alpha, k}$ is defined by 
\begin{equation}
\label{eq:affinehyperp}
H_{\alpha, k}=\{x\in V\mid 
(\alpha, x)=k\}. 
\end{equation}

Fix a positive system $\Phi^+\subset \Phi$ and 
the set of simple roots $\Delta=\{\alpha_1, \dots, \alpha_\ell\}
\subset\Phi^+$. The highest root, denoted by 
$\widetilde{\alpha}\in\Phi^+$, can be expressed as a linear 
combination $\widetilde{\alpha}=\sum_{i=1}^\ell c_i\alpha_i$ 
($c_i\in\bZ_{>0}$). We also set $\alpha_0:=-\widetilde{\alpha}$ and 
$c_0:=1$. Then we have the linear relation 
\begin{equation}
\label{eq:linrel}
c_0\alpha_0+c_1\alpha+\dots+c_\ell\alpha_\ell=0. 
\end{equation}
The coweight lattice $\cwtl(\Phi)$ and the coroot lattice $\crtl(\Phi)$ 
are defined as follows. 
\[
\begin{split}
\cwtl(\Phi)
&=
\{x\in V\mid (\alpha_i, x)\in\bZ, \alpha_i\in\Delta\}, \\
\crtl(\Phi)
&=
\sum_{\alpha\in\Phi}\bZ\cdot
\frac{2\alpha}{(\alpha, \alpha)}. 
\end{split}
\]
The coroot lattice $\crtl(\Phi)$ is a finite index subgroup 
of the coweight lattice $\cwtl(\Phi)$. The index 
$\#\frac{\cwtl(\Phi)}{\crtl(\Phi)}=f$ is called the 
\emph{index of connection}. 

Let $\varpi_i^\lor\in\cwtl(\Phi)$ be the dual basis to the 
simple roots $\alpha_1, \dots, \alpha_\ell$, that is, 
$(\alpha_i, \varpi_j^\lor)=\delta_{ij}$. Then $\cwtl(\Phi)$ is 
a free abelian group generated by 
$\varpi_1^\lor, \dots, \varpi_\ell^\lor$. We also have 
$c_i=(\varpi_i^\lor, \widetilde{\alpha})$. 

A connected component of $V\smallsetminus 
\bigcup\limits_{\substack{\alpha\in\Phi^+\\ k\in\bZ}}H_{\alpha, k}$ 
is called an \emph{alcove}. 
Let us define the fundamental alcove $\alcov$ by 
\[
\begin{split}
\alcov
=&
\left\{
x\in V
\left|
\begin{array}{ll}
(\alpha_i, x)>0,&(1\leq i\leq \ell)\\
(\widetilde{\alpha}, x)<1&
\end{array}
\right.
\right\}
\\
=&
\left\{
x\in V
\left|
\begin{array}{ll}
(\alpha_i, x)>0,&(1\leq i\leq \ell)\\
(\alpha_0, x)>-1&
\end{array}
\right.
\right\}. 
\end{split}
\]
The closure 
$\baralcov=\{x\in V\mid (\alpha_i, x)\geq 0\ 
(1\leq i\leq \ell),\ (\widetilde{\alpha}, x)\leq 1\}$ 
is the convex hull of $0, \frac{\varpi_1^\lor}{c_1}, \dots, 
\frac{\varpi_\ell^\lor}{c_\ell}\in V$. 
The closed alcove $\baralcov$ is a simplex. 
The supporting hyperplanes of facets of $\baralcov$ are 
$H_{\alpha_1, 0}, \dots, H_{\alpha_\ell, 0}, H_{\widetilde{\alpha}, 1}$. 
We note that 
$\baralcov$ is a fundamental domain of the affine Weyl group 
$W_{\aff}=W\ltimes\crtl(\Phi)$.

Let $\parallelp$ denote the fundamental domain 
of the coweight lattice $\cwtl(\Phi)$ defined by 
\begin{equation}
\label{eq:defparallelp}
\begin{split}
\parallelp
&=
\sum_{i=1}^\ell (0,1]\varpi_i^\lor\\
&=
\{x\in V\mid 0<(\alpha_i, x)\leq 1, i=1, \dots, \ell\}. 
\end{split}
\end{equation}
Here we summarize without proofs some useful facts on root systems \cite{hum}. 
\begin{proposition}
\label{prop:rootsys}
\begin{itemize}
\item[(i)] 
$c_0+c_1+\dots+c_\ell=h$. 
\item[(ii)] 
$\frac{|W|}{f}=\frac{\vol(\parallelp)}{\vol(\alcov)}=
l!\cdot c_1\cdot c_2 \cdots c_\ell$. 
\item[(iii)] 
$|\Phi^+|=\frac{\ell h}{2}$. 
\end{itemize}
\end{proposition}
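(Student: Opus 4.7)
The three statements in Proposition \ref{prop:rootsys} are classical facts that appear (perhaps scattered) in Humphreys \cite{hum}. My plan is to reduce each to a short, self-contained argument using the structural material already introduced in \S\ref{subsec:rs}.

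For (i), I would use the interpretation of the Coxeter number as one more than the height of the highest root. The height function $\operatorname{ht}:\Phi^+\to\bZ_{>0}$ is defined by $\operatorname{ht}\!\left(\sum n_i\alpha_i\right)=\sum n_i$, and a standard fact is that $h=\operatorname{ht}(\widetilde{\alpha})+1$. Since $\widetilde{\alpha}=\sum_{i=1}^\ell c_i\alpha_i$, this gives $h=1+c_1+\cdots+c_\ell=c_0+c_1+\cdots+c_\ell$. The identity $h=\operatorname{ht}(\widetilde{\alpha})+1$ itself can be proved from the fact that a Coxeter element acts on the simple root basis by a single cycle in an appropriate sense, so that the highest root has height $h-1$; I would simply cite \cite{hum}.

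For (ii), the argument is a volume comparison of two fundamental domains. By the very definition (\ref{eq:defparallelp}), $\parallelp$ is a fundamental domain for the coweight lattice $\cwtl(\Phi)$, so $\vol(\parallelp)=\operatorname{covol}(\cwtl(\Phi))$. On the other hand, $\baralcov$ is a fundamental domain for the affine Weyl group $W_{\aff}=W\ltimes \crtl(\Phi)$, so $\vol(\alcov)=\operatorname{covol}(\crtl(\Phi))/|W|$. Since $\crtl(\Phi)\subset\cwtl(\Phi)$ has index $f$, we have $\operatorname{covol}(\crtl(\Phi))=f\cdot\operatorname{covol}(\cwtl(\Phi))$, hence
\[
\frac{\vol(\parallelp)}{\vol(\alcov)}=\frac{|W|\cdot\operatorname{covol}(\cwtl(\Phi))}{\operatorname{covol}(\crtl(\Phi))}=\frac{|W|}{f}.
\]
For the second equality, I would use that $\baralcov$ is the simplex with vertices $0,\varpi_1^\lor/c_1,\ldots,\varpi_\ell^\lor/c_\ell$, whose volume is
\[
\vol(\baralcov)=\frac{1}{\ell!}\left|\det\!\left(\tfrac{\varpi_1^\lor}{c_1},\ldots,\tfrac{\varpi_\ell^\lor}{c_\ell}\right)\right|=\frac{1}{\ell!\,c_1\cdots c_\ell}\vol(\parallelp),
\]
since $\vol(\parallelp)=|\det(\varpi_1^\lor,\ldots,\varpi_\ell^\lor)|$.

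For (iii), I would use the standard fact that the exponents satisfy $\sum_{i=1}^\ell e_i = |\Phi^+|$ together with the palindromic duality $e_i+e_{\ell+1-i}=h$. Pairing exponents gives $\sum e_i=\ell h/2$, and hence $|\Phi^+|=\ell h/2$. Both ingredients are classical (see \cite{hum}, Sections 3.16--3.20), arising from the spectrum of a Coxeter element on the reflection representation. Alternatively, one can cite directly Kostant's theorem $|\Phi|=\ell h$ and halve it.

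The only substantive obstacle is that (i) and (iii) rest on properties of Coxeter elements (height of the highest root, symmetry of exponents, $|\Phi|=\ell h$) which are nontrivial; however, since these are standard textbook results and the paper's point is to use them rather than reprove them, a clean citation to \cite{hum} is the most appropriate treatment. The novel content of the proof is really the fundamental-domain volume computation in (ii), which is straightforward once one has set up $\parallelp$ and $\alcov$ as above.
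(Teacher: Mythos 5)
The paper itself offers no proof of this proposition: it explicitly states these as ``facts on root systems'' summarized ``without proofs'' from Humphreys \cite{hum}, so there is no argument in the paper to compare yours against line by line. Your proposal is correct and fills in exactly what the paper delegates to the reference. Part (i) via $h=\operatorname{ht}(\widetilde{\alpha})+1$ and part (iii) via $|\Phi|=\ell h$ (equivalently $\sum e_i=|\Phi^+|$ with $e_i+e_{\ell+1-i}=h$) are the standard reductions, and citing \cite{hum} for the underlying Coxeter-element facts is the appropriate level of detail here. The one piece of genuine content is your proof of (ii), and it checks out: $\parallelp$ is a half-open fundamental parallelepiped for $\cwtl(\Phi)$, $\baralcov$ is a fundamental domain for $W_{\aff}=W\ltimes\crtl(\Phi)$ so $\vol(\alcov)=\operatorname{covol}(\crtl(\Phi))/|W|$, and the index $[\cwtl(\Phi):\crtl(\Phi)]=f$ converts between the two covolumes, giving $\vol(\parallelp)/\vol(\alcov)=|W|/f$; the simplex-volume computation for the second equality, using the vertices $0,\varpi_1^\lor/c_1,\dots,\varpi_\ell^\lor/c_\ell$, is likewise correct. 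In short, your treatment is consistent with (and slightly more self-contained than) the paper's.
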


\subsection{Partition of the fundamental parallelepiped}
\label{subsec:partition}

Let us consider the set of alcoves contained in $\parallelp$, denoted by 
$\{\alcovxi\mid \xi\in\Xi\}$, where $\Xi$ is a finite set 
with $|\Xi|=\frac{|W|}{f}$ (by Proposition \ref{prop:rootsys} (ii)). 
In other words, 
\begin{equation}
\label{eq:alcoves}
\parallelp\smallsetminus\bigcup_{\alpha\in\Phi^+, k\in\bZ}
H_{\alpha, k}=
\bigsqcup_{\xi\in\Xi}A_{\xi}^{\circ}. 
\end{equation}
Each $A_{\xi}^{\circ}$ can be written uniquely as 
\begin{equation}
\label{eq:Axi}
\alcovxi=
\left\{x
\in V 
\left|
\begin{array}{ll}
(\alpha, x)>k_\alpha & \mbox{ for }\alpha\in I\\
(\beta, x)<k_\beta & \mbox{ for }\beta\in J
\end{array}
\right.
\right\}, 
\end{equation}
for some positive roots $I, J\subset\Phi^+$ with $|I\sqcup J|=\ell+1$, 
and $k_\alpha, k_\beta\in\bZ$ ($\alpha\in I, \beta\in J$). 
By definition, the 
facets of $\overline{\alcovxi}$ are supported by the hyperplanes 
$H_{\alpha, k_\alpha}$ ($\alpha\in I$) and 
$H_{\beta, k_\beta}$ ($\beta\in J$). 

\begin{definition}
\label{def:semialcov}
With notation as above, 
let us define the partially closed alcove $\semialcov$ by 
\begin{equation}
\label{eq:Axisemi}
\semialcov:=
\left\{x
\in V 
\left|
\begin{array}{ll}
(\alpha, x)>k_\alpha & \mbox{ for }\alpha\in I\\
(\beta, x)\leq k_\beta & \mbox{ for }\beta\in J
\end{array}
\right.
\right\}. 
\end{equation}
\end{definition}

Obviously, the interior of $\semialcov$ is $A_{\xi}^{\circ}$. Although 
$\semialcov$ is not a closure of $\alcovxi$, 
$\semialcov$ may be considered as the partial closure of $\alcovxi$. 

\begin{proposition}
\label{prop:semiclosure}
Let $\rho=\sum_{i=1}^\ell\varpi_i^\lor$. 
Then $x\in \semialcov$ if and only if 
for sufficiently small $0<\varepsilon\ll 1$, 
$x-\varepsilon\cdot\rho\in \alcovxi$, 
(that is, there exists 
$\varepsilon_0>0$ such that if $0<\varepsilon<\varepsilon_0$, then  
$x-\varepsilon\cdot\rho\in \alcovxi$). 
\end{proposition}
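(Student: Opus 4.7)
The plan is to exploit the fact that $\rho=\sum_{i=1}^{\ell}\varpi_i^{\lor}$ is a ``strictly regular direction'' in the sense that
\[
(\alpha,\rho)=\sum_{i=1}^{\ell}n_i>0
\]
whenever $\alpha=\sum n_i\alpha_i\in\Phi^+$, since every coefficient $n_i$ is a nonnegative integer and not all are zero. In particular, for every $\alpha\in I$ and $\beta\in J$ appearing in the defining inequalities (\ref{eq:Axi}) and (\ref{eq:Axisemi}), we have $(\alpha,\rho)>0$ and $(\beta,\rho)>0$. This is the only structural input I will need; the rest is a direct comparison of the defining inequalities.

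For the forward direction, I start with $x\in\semialcov$ and examine $y_\varepsilon:=x-\varepsilon\rho$. For each $\beta\in J$ one has $(\beta,y_\varepsilon)=(\beta,x)-\varepsilon(\beta,\rho)<(\beta,x)\leq k_\beta$, so the strict inequality required by $\alcovxi$ holds automatically for every $\varepsilon>0$. For each $\alpha\in I$, I already have the strict inequality $(\alpha,x)>k_\alpha$, and subtracting $\varepsilon(\alpha,\rho)$ only decreases the value; since $I$ is finite, choosing
\[
\varepsilon_0=\min_{\alpha\in I}\frac{(\alpha,x)-k_\alpha}{(\alpha,\rho)}>0
\]
guarantees $(\alpha,y_\varepsilon)>k_\alpha$ for all $0<\varepsilon<\varepsilon_0$ and all $\alpha\in I$. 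Thus $y_\varepsilon\in\alcovxi$ on this interval.

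For the converse, assume there exists $\varepsilon_0>0$ so that $x-\varepsilon\rho\in\alcovxi$ for all $\varepsilon\in(0,\varepsilon_0)$. The conditions imposed by $I$ are stable under the substitution: $(\alpha,x)=(\alpha,x-\varepsilon\rho)+\varepsilon(\alpha,\rho)>k_\alpha+\varepsilon(\alpha,\rho)>k_\alpha$. The conditions imposed by $J$ become $(\beta,x)<k_\beta+\varepsilon(\beta,\rho)$ for every $\varepsilon\in(0,\varepsilon_0)$; letting $\varepsilon\downarrow 0$ yields $(\beta,x)\leq k_\beta$, which is precisely the non-strict inequality in the definition of $\semialcov$. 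Combining these, $x\in\semialcov$.

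There is really no serious obstacle; the only thing to keep in mind is that the inequalities in (\ref{eq:Axi}) and (\ref{eq:Axisemi}) only involve $\alpha,\beta\in\Phi^+$, which is what makes $\rho$ a simultaneously valid ``inward pointing'' perturbation for all facets of $\alcovxi$ governed by $I$ and a ``safe'' perturbation for those governed by $J$. If the indexing allowed negative roots, the argument would break, so the placement of $I,J$ inside $\Phi^+$ (rather than $\Phi$) is the one point worth flagging explicitly.
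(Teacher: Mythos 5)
Your proof is correct, and it is exactly the direct verification the paper has in mind (the paper's own proof is simply the word ``Straightforward''). The key point you isolate --- that $(\alpha,\rho)>0$ for every $\alpha\in\Phi^+$, so that subtracting $\varepsilon\rho$ preserves the strict $I$-inequalities for small $\varepsilon$ and converts the weak $J$-inequalities into strict ones --- is precisely the content being left to the reader.
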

\begin{proof}
Straightforward. 
\end{proof}
From Proposition \ref{prop:semiclosure}, we have a partition of 
$\parallelp$. 
\begin{proposition}
\label{prop:partition}
\begin{equation}
\label{eq:partition}
\parallelp=\bigsqcup_{\xi\in\Xi}\semialcov . 
\end{equation}
\end{proposition}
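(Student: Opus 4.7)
The plan is to derive both the covering $\parallelp \subseteq \bigcup_{\xi \in \Xi} \semialcov$ and the pairwise disjointness of the partially closed alcoves as immediate consequences of the perturbation characterization supplied by Proposition \ref{prop:semiclosure}. The single crucial inequality I will lean on is $(\alpha, \rho) > 0$ for every $\alpha \in \Phi^+$, where $\rho = \sum_{i=1}^\ell \varpi_i^\lor$. This follows at once from $(\alpha_i, \rho) = 1$ on the simple roots combined with the fact that a positive root is a nonnegative integer combination of simple roots with at least one positive coefficient.

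To prove the covering, I would fix $x \in \parallelp$ and study the perturbation $x_\varepsilon := x - \varepsilon \rho$ for $\varepsilon > 0$ small. First I would check that $x_\varepsilon \in \parallelp$: the identity $(\alpha_i, x_\varepsilon) = (\alpha_i, x) - \varepsilon$ yields $0 < (\alpha_i, x_\varepsilon) \leq 1 - \varepsilon < 1$ once $\varepsilon < (\alpha_i, x)$, using that $(\alpha_i, x) > 0$. Second, I would verify that $x_\varepsilon$ avoids every affine hyperplane $H_{\alpha, k}$ with $\alpha \in \Phi^+$ and $k \in \bZ$: if $x \notin H_{\alpha,k}$ this holds for $\varepsilon$ small by continuity, while if $x \in H_{\alpha,k}$ then $(\alpha, x_\varepsilon) = k - \varepsilon(\alpha, \rho) \neq k$ because $(\alpha, \rho) > 0$. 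Only finitely many hyperplanes meet any bounded neighborhood of $x$, so a single $\varepsilon_0 > 0$ works uniformly. Thus $x_\varepsilon$ lies in a unique alcove $\alcovxi \subseteq \parallelp$ for $\varepsilon \in (0, \varepsilon_0)$, and by connectedness of alcoves the index $\xi$ is independent of $\varepsilon$ on this interval. Proposition \ref{prop:semiclosure} then gives $x \in \semialcov$.

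For disjointness, if $x \in \semialcov_{\xi_1} \cap \semialcov_{\xi_2}$, I would apply Proposition \ref{prop:semiclosure} twice to conclude $x - \varepsilon\rho \in \alcovxi_1 \cap \alcovxi_2$ for all sufficiently small $\varepsilon > 0$; since distinct alcoves are disjoint, $\xi_1 = \xi_2$. The reverse containment $\semialcov \subseteq \parallelp$ is then a matter of taking $\varepsilon \to 0^+$ in the inequalities $0 < (\alpha_i, x - \varepsilon\rho) \leq 1$ inherited from $x - \varepsilon\rho \in \alcovxi \subseteq \parallelp$, which force $0 < (\alpha_i, x) \leq 1$ in the limit (the strict left inequality coming from the fact that $(\alpha_i, x) = 0$ would give $(\alpha_i, x - \varepsilon\rho) = -\varepsilon < 0$).

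There is no real obstacle: the entire argument is a controlled linear perturbation, and Proposition \ref{prop:semiclosure} does all the conceptual work. The only minor bookkeeping point is choosing a uniform $\varepsilon_0$ that handles the boundary hyperplanes of $\parallelp$ together with the finitely many $H_{\alpha,k}$ passing near $x$; since this set is finite for any fixed $x$, the choice is automatic.
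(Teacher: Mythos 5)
Your proof is correct and follows essentially the same route as the paper: perturb $x$ to $x-\varepsilon\rho$, observe that the perturbed point avoids all the hyperplanes $H_{\alpha,k}$ and hence lies in a unique alcove $\alcovxi\subseteq\parallelp$, and then invoke Proposition \ref{prop:semiclosure} to place $x$ in the unique $\semialcov$. The paper's proof is just a terser version of the same argument; your extra care with the uniform $\varepsilon_0$, the disjointness, and the containment $\semialcov\subseteq\parallelp$ fills in details the paper leaves implicit.
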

\begin{proof}
It is enough to show that each $x\in\parallelp$ is contained 
in the unique $\semialcov$. Let $x\in\parallelp$. Then 
for sufficiently small $\varepsilon>0$, 
$(\alpha, x-\varepsilon\cdot\rho)\notin\bZ$ for all $\alpha\in\Phi^+$, 
hence $x-\varepsilon\cdot\rho$ is contained in the unique alcove $\alcovxi$. 
By Proposition \ref{prop:semiclosure}, $x$ is contained in the corresponding 
$\semialcov$. 
\end{proof}

\subsection{Shift operator and ``Riemann hypothesis''}
\label{subsec:rh}

Let $a, b\in\bZ$ be integers with $a\leq b$. Let us denote 
by $\A_{\Phi}^{[a,b]}$ the hyperplane arrangement 
\[
\A_{\Phi}^{[a,b]}=
\{H_{\alpha, k}\mid \alpha\in\Phi^+, k\in\bZ, a\leq k\leq b\}. 
\]
By Proposition \ref{prop:rootsys} (iii), we have 
$|\A_{\Phi}^{[a,b]}|=\frac{\ell\cdot h\cdot (b-a+1)}{2}$. 
For special cases, the characteristic polynomial 
$\chi(\A_{\Phi}^{[a,b]}, t)$ factors. 
\begin{theorem}
\label{thm:charpoly}
\begin{itemize}
\item[(i)] 
If $k\geq 0$, then 
$\chi(\A_{\Phi}^{[-k,k]}, t)=\prod_{i=1}^\ell(t-e_i-kh)$. 
\item[(ii)] 
If $k\geq 1$, then 
$\chi(\A_{\Phi}^{[1-k,k]}, t)=(t-kh)^\ell$. 
\end{itemize}
\end{theorem}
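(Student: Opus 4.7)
The plan is to derive both formulas within the Worpitzky partition framework of this paper, extracting $\chi(\A, t) = g_1(t)$ via (\ref{eq:chig1}). I would begin by translating the count $\#M(\A_\Phi^{[a,b]}; q)$ (valid for $q \gg 0$ by Theorem \ref{thm:ktt}) into a lattice-point count in the dilated fundamental parallelepiped, using the bijection $\cwtl(\Phi)/q\cwtl(\Phi) \leftrightarrow q\parallelp \cap \cwtl(\Phi)$, and then applying Proposition \ref{prop:partition} to decompose this count as a sum indexed by the alcoves $\alcovxi$ of the Worpitzky partition.

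For each $\xi \in \Xi$, I expect to count the lattice points in $q\semialcov$ avoiding the Shi or Catalan hyperplanes as an Ehrhart quasi-polynomial evaluated at $q$ with an integer shift depending on $\xi$ and on the interval $[a,b]$. The resulting sum over $\xi$ will be evaluated via the distributional fact that $\asc(\alcovxi)$ on alcoves of $\parallelp$ has generating function the generalized Eulerian polynomial $\geneul_\Phi$. For part (ii), the resulting operator expression $(S^{kh}\geneul_\Phi(S)\ehr_{\baralcov})(q)$ should collapse, via the generalized Worpitzky identity (\ref{eq:introsum}), to $(q-kh)^\ell$. Since this output is a genuine polynomial (no residue-class dependence), specialization gives $g_1(t) = (t-kh)^\ell$, which is (ii).

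For the extended Catalan arrangement (i), an analogous computation would produce an operator expression whose polynomial output must match $\prod_i(t-e_i-kh)$. Here the crucial additional input is the base case $k=0$, that is, $\chi(\A_\Phi, t) = \prod_{i=1}^\ell(t-e_i)$, which is the classical factorization theorem for Coxeter arrangements due to Brieskorn, Orlik--Solomon, and Terao, resting on Solomon's product formula for $|W|$. The general $k \geq 1$ case should then emerge from an $h$-shift, matching the operator $S^{kh}$ that arises naturally in the Worpitzky expansion of $\chi_{\quasi}(\A_\Phi^{[-k,k]}, q)$.

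The main obstacle is the distributional statement itself: proving, case-freely, that the ascent statistic on alcoves of $\parallelp$ has generating function $\geneul_\Phi(t)$. Once this is in hand, the remainder of the outline is essentially bookkeeping with shift operators and Ehrhart polynomials. A secondary conceptual obstacle is part (i): the Catalan factorization genuinely depends on the exponents $e_i$, so the Worpitzky/Eulerian machinery alone cannot produce the product formula and must be combined with the classical Coxeter-arrangement factorization as an external ingredient, in contrast to the Shi case where the machinery alone is sufficient.
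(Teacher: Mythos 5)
Your proposal for part (ii) is exactly the paper's route: the paper states Theorem \ref{thm:charpoly} in the background section with citations (\cite{ath-gen} for (i), \cite{yos-char} for (ii)) and gives no proof there, but after Theorem \ref{thm:shi} it derives (ii) precisely as you describe, by specializing the quasi-polynomial identity $\chi_{\quasi}(\A_\Phi^{[1-k,k]},q)=(S^{kh}\geneul_\Phi(S)\ehr_{\baralcov})(q)=(q-kh)^\ell$ to $q$ coprime to $\widetilde{n}$ and invoking (\ref{eq:chig1}).

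For part (i) the paper supplies no proof at all, so your sketch is genuinely supplementary; it is workable and is essentially the argument of \cite{ath-gen}, but two points deserve correction. First, the generalized Eulerian polynomial and the ascent distribution drop out of the Catalan computation entirely: imposing the conditions $(\alpha,x)\geq qk_\alpha+k+1$ for $\alpha\in I$ and $(\beta,x)\leq qk_\beta-k-1$ for $\beta\in J$ shrinks \emph{every} alcove by the same total amount $k\asc(\alcovxi)+ (k+1)\dsc(\alcovxi)+\asc(\alcovxi)=(k+1)h$, since $\asc(w)+\dsc(w)=h$; so Corollary \ref{cor:partialremove} gives $\chi_{\quasi}(\A_\Phi^{[-k,k]},q)=\frac{|W|}{f}\ehr_{\baralcov}(q-(k+1)h)$ with no Eulerian weighting, and the distributional statement you flag as ``the main obstacle'' is not needed here (nor, in fact, for (ii), where only the raw partition identity $\sum_{\xi}\ehr_{\baralcov}(q-\asc(\alcovxi))=q^\ell$ is used; the identification with $\geneul_\Phi$ via Theorem \ref{thm:eulerianparallelp} is a short $f$-to-one covering argument anyway). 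Second, the exponents then enter not through the Brieskorn--Orlik--Solomon--Terao factorization as an external ingredient but through Suter's product formula, Theorem \ref{thm:suter} (iv), already quoted in the paper: for $q$ coprime to $\widetilde{n}$ one gets $\frac{|W|}{f}\ehr_{\baralcov}(q-(k+1)h)=\prod_i(q-(k+1)h+e_i)=\prod_i(q-kh-e_{\ell+1-i})$ using $e_i+e_{\ell+1-i}=h$. Your alternative, feeding in the Coxeter factorization and then applying the $h$-shift via Corollary \ref{cor:duality}, is an equivalent input and also works, but it obscures the fact that the paper's own toolkit is self-contained on this point.
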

The above result had been conjectured by Edelman and Reiner \cite{ede-rei}. 
Theorem \ref{thm:charpoly} (i) was proved in 
\cite{ath-gen} by using lattice point counting techniques, 
which will be developed further in this paper. 
Theorem \ref{thm:charpoly} (ii) was proved in 
\cite{yos-char} by use of the theory of free arrangements 
(\cite{ede-rei, ot, ter-fact, ter-multi}).

For an interval $[a, b]\neq [-k, k], [1-k, k]$, the characteristic 
polynomial 
$\chi(\A_{\Phi}^{[a,b]}, t)$ does not factor in general. 
Postnikov and Stanley pose the following 
``Riemann hypothesis''. 

\begin{conjecture}
\label{conj:rh}
(\cite[Conjecture 9.14]{ps-def}) Let $a, b\in\bZ$ with $a\leq 1\leq b$. 
Suppose $a+b\geq 1$. Then every root $t\in\bC$ of the equation 
$\chi(\A_{\Phi}^{[a,b]}, t)=0$ satisfies 
$\Re t=\frac{h(b-a+1)}{2}$. 
\end{conjecture}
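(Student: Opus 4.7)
The plan is to combine the Worpitzky-type formula for $\chi_\quasi(\A_\Phi^{[1,n]}, q)$ sketched in the introduction with the duality of the generalized Eulerian polynomial $\geneul_\Phi$, then localize the zeros of the resulting polynomial.

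First I would reduce to the Linial arrangement. Setting $k := 1 - a \ge 0$ and $n := a + b - 1 \ge 0$, the $h$-shift reduction $\chi(\A_\Phi^{[a,b]}, t) = \chi(\A_\Phi^{[1,n]}, t - kh)$ from Theorem~1.2(ii), combined with the identity $h(b-a+1)/2 = nh/2 + kh$, shows that the conjecture for $[a,b]$ follows at once from the case $[1,n]$. Moreover, the functional equation in Theorem~1.2(iii) gives $\chi(\A_\Phi^{[1,n]}, nh - t) = (-1)^\ell \chi(\A_\Phi^{[1,n]}, t)$, so the complex roots already come in pairs $\{t, nh - t\}$ symmetric about the vertical line $\Re t = nh/2$; what remains is to rule out roots off this line.

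Next I would extract the characteristic polynomial from the identity $\chi_\quasi(\A_\Phi^{[1,n]}, q) = (\geneul_\Phi(S^{n+1}) \ehr_{\baralcov})(q)$ by restricting to the branch $g_1(t)$ corresponding to $\gcd(q, \widetilde{n}) = 1$, and then exploit the duality $t^h \geneul_\Phi(1/t) = \geneul_\Phi(t)$ to rewrite $\chi(\A_\Phi^{[1,n]}, t)$ as a manifestly palindromic expression about $nh/2$. For parameters satisfying $\rad(\widetilde{n}) \mid (n+1)$, the shift $S^{n+1}$ preserves the coprimality class, so the quasi-polynomial collapses to a single polynomial in $\geneul_\Phi$ and the Ehrhart polynomial of $\baralcov$. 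Zero-location then becomes tractable: one transforms the line $\Re t = nh/2$ to the unit circle by a suitable M\"obius change of variable and counts sign changes in the spirit of Athanasiadis's treatment of the classical types in \cite{ath-lin}, using the explicit coefficient data for $\geneul_\Phi$ in types $E_6, E_7, E_8, F_4$ from \cite{lp-alc2} to verify case by case.

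The main obstacle, and the reason the paper only establishes Theorem~1.2(iv) under the congruence $n \equiv -1 \mod \rad(\widetilde{n})$, is that for general $n$ the shift $S^{n+1}$ permutes the $\gcd$-classes of the quasi-polynomial nontrivially, so $g_1(t)$ is an interlacing of several branches $g_d(t)$ and admits no clean palindromic factorization in terms of $\geneul_\Phi$ alone. Removing this congruence condition would seem to require either a finer Worpitzky-style partition of $\cwtl(\Phi)/q\cwtl(\Phi)$ indexed by $\gcd$-classes, or a new analytic tool tailored to polynomials built simultaneously from $\geneul_\Phi$ and all Ehrhart branches of $\ehr_{\baralcov}$; neither appears within easy reach.
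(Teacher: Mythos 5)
This statement is stated in the paper as an open conjecture (Postnikov--Stanley), not as a theorem with a proof: the paper only establishes the functional equation and $h$-shift reduction in general, and the ``Riemann hypothesis'' itself only for $\Phi\in\{E_6,E_7,E_8,F_4\}$ under the congruence $n\equiv -1\bmod\rad(\widetilde{n})$ (Theorem \ref{thm:rh}), the case $a+b=1$ being covered separately by Theorem \ref{thm:charpoly}(ii). Your proposal does not prove the conjecture either, and you say so honestly; what you describe is essentially the paper's own partial program: reduce to $[1,n]$ via the shift reduction, use the duality of $\geneul_\Phi$ for the symmetry of roots about $\Re t=nh/2$, and, when $\rad(\widetilde{n})\mid(n+1)$, collapse the quasi-polynomial to the single branch $g_1$ so that $\chi(\A_\Phi^{[1,n]},t)=\geneul_\Phi(S^{n+1})g(t)$ with $g(t)=\frac{f}{|W|}\prod_i(t+e_i)$. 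The only divergence is at the final verification step: the paper strips off the cyclotomic factors of $\geneul_\Phi(S^{n+1})$ (via Theorem \ref{thm:lp}, Proposition \ref{prop:congruence}, and Lemma \ref{lem:polya}) and then checks the residual degree-$\ell$ polynomial $\eul_\ell(S^{n+1})g(t)$ by direct numerical computation of its roots, rather than by a M\"obius transformation and sign-change count as you suggest; either works for a single explicit polynomial. Your diagnosis of the obstruction to the general case --- that $S^{n+1}$ mixes the $\gcd$-classes of $\ehr_{\baralcov}$ when $\rad(\widetilde{n})\nmid(n+1)$, so no single palindromic branch is available --- matches the reason the paper stops where it does. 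One small point: your reduction gives $n=a+b-1\geq 0$, but the Linial formulas require $n\geq 1$, so the boundary case $a+b=1$ must be handled separately as the extended Shi arrangement.
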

Conjecture \ref{conj:rh} has been proved by Stanley, Postnikov and 
Athanasiadis in \cite{ath-lin, ps-def} 
for $\Phi\in\{A_\ell, B_\ell, C_\ell, D_\ell, G_2\}$. 
We recall their results. 

Let $f:\bN\longrightarrow\bR$ be a partial function, that is, 
a function defined on a subset of $\bN$. Define the action of 
the \emph{shift operator} $S$ by 
\[
(Sf)(t)=f(t-1). 
\]
More generally, for a polynomial $P(S)=\sum_ka_kS^k$ in $S$, the action is 
defined by 
\[
(P(S)f)(t)=\sum_k a_k f(t-k). 
\]
\begin{proposition}
\label{prop:annihilate}
Let $g(S)\in\bR[S]$ and $f(t)\in\bR[t]$. Suppose $\deg f=n$. 
Then $g(S)f=0$ if and only if $(1-S)^{n+1}|g(S)$. 
\end{proposition}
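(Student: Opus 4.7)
The plan is to treat $1-S$ as the backward difference operator, whose defining feature is that it strictly lowers the degree of any nonzero polynomial by exactly one. Concretely, if $f(t) = ct^n + (\text{lower order})$ with $c \neq 0$ and $n \geq 1$, then
\[
((1-S)f)(t) = f(t) - f(t-1) = cnt^{n-1} + (\text{lower order}),
\]
so $(1-S)f$ has degree exactly $n-1$. Iterating this, I would establish the key lemma that $(1-S)^k f$ has degree $n-k$ for $0 \leq k \leq n$, and in particular $(1-S)^{n+1} f = 0$ whenever $\deg f = n$.

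For the ``if'' direction, suppose $(1-S)^{n+1} \mid g(S)$ and write $g(S) = (1-S)^{n+1} h(S)$ in $\bR[S]$. Since $\bR[S]$ is commutative and the $S$-action on functions extends to an $\bR[S]$-module action, we get $g(S) f = h(S)\bigl((1-S)^{n+1} f\bigr) = h(S) \cdot 0 = 0$.

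For the ``only if'' direction, I would peel off the maximal power of $(1-S)$ from $g(S)$: write $g(S) = (1-S)^m h(S)$ with $h(S) \in \bR[S]$ and $h(1) \neq 0$. Suppose, for contradiction, that $m \leq n$. Set $p(t) := \bigl((1-S)^m f\bigr)(t)$; by the lemma, $p$ is a polynomial of degree exactly $n - m \geq 0$, with nonzero leading coefficient $c_p$. Writing $h(S) = \sum_k a_k S^k$, we have
\[
(g(S) f)(t) = (h(S) p)(t) = \sum_k a_k\, p(t-k).
\]
Since each $p(t-k)$ has the same leading coefficient $c_p$ as $p$, the leading term of $(h(S)p)(t)$ is $\bigl(\sum_k a_k\bigr) c_p\, t^{n-m} = h(1) \cdot c_p \cdot t^{n-m}$, which is nonzero because $h(1) \neq 0$ and $c_p \neq 0$. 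This contradicts $g(S)f = 0$, so $m \geq n+1$, i.e. $(1-S)^{n+1} \mid g(S)$.

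There is no serious obstacle here; this is a standard fact from the umbral/finite-difference calculus. The only point requiring a touch of care is the leading-coefficient computation in the converse: one must notice that applying an arbitrary shift polynomial $h(S)$ to a polynomial in $t$ preserves the leading coefficient up to the scalar factor $h(1)$, which is precisely why the factor $(1-S)$ is the unique obstruction to non-annihilation.
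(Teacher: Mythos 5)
Your proof is correct and follows essentially the same route as the paper: both directions rest on the observation that $1-S$ is the backward difference operator and strictly lowers degree by one, and the converse is settled by a leading-coefficient computation. The only cosmetic difference is that the paper expands $g(S)$ in a Taylor series at $S=1$ and eliminates the coefficients $b_0,\dots,b_n$ successively, whereas you factor out the maximal power of $1-S$ and conclude in one step from $h(1)\neq 0$; the two arguments are interchangeable.
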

\begin{proof}
First note that since $(1-S)f(t)=f(t)-f(t-1)$ is the difference operator, 
$\deg ((1-S)f)=n-1$. Suppose $(1-S)^{n+1}|g(S)$. Then by induction, 
it is easily seen that $(1-S)^{n+1}f=0$. Hence $g(S)f=0$. 

Conversely, suppose $g(S)f=0$. Consider the Taylor expansion 
of $g(S)$ at $S=1$. Set $g(S)=b_0+b_1(S-1)+b_2(S-1)^2+\dots+
b_n(S-1)^n+(S-1)^{n+1}\widetilde{g}(S)$. Since $(S-1)^{n+1}f=0$, 
we have 
\begin{equation}
\label{eq:leadingterm}
(b_0+b_1(S-1)+b_2(S-1)^2+\dots+b_n(S-1)^n)f=0. 
\end{equation}
Set $f(t)=r_0t^n+r_1t^{n-1}+\dots+r_n$ with $r_0\neq 0$. 
The coefficient of the term of degree $n$ in 
(\ref{eq:leadingterm}) is $b_0 r_0$. Hence $b_0=0$. Similarly, 
$b_1=\dots=b_n=0$, and we have $g(S)=(S-1)^{n+1}\widetilde{g}(S)$. 
\end{proof}
The shift operator can be used to express characteristic polynomials. 

\begin{theorem}
\label{thm:rhABCD}
(\cite{ath-lin, ps-def}). 
\begin{itemize}
\item[(1)] Let $n\geq 1$ and $k\geq 0$. 
For the cases $\Phi\in\{A_\ell, B_\ell, C_\ell, D_\ell\}$, 
$\chi(\A_{\Phi}^{[1-k, n+k]}, t)=\chi(\A_{\Phi}^{[1,n]}, t-kh)$. 
\item[(2)] Let $n\geq 1$. Then the characteristic polynomial 
$\chi(\A_{\Phi}^{[1,n]}, t)$ has the following expression. 
\begin{itemize}
\item[(i)] For $\Phi=A_\ell$, 
\begin{equation}
\label{eq:extendedlinialA}
\chi(\A_{A_\ell}^{[1,n]}, t)=
\left(
\frac{1+S+S^2+\dots+S^n}{1+n}
\right)^{\ell+1}t^\ell. 
\end{equation}
\item[(ii)] For $\Phi=B_\ell$ or $C_\ell$, 
\begin{equation}
\small \chi(\A_{\Phi}^{[1,n]}, t)=
\left\{
\begin{array}{ll}
\frac{4S(1+S^2+S^4+\dots+ S^{2n})^{\ell-1}(1+S^2+S^4+\dots+S^{n-1})^2}{(1+n)^{\ell+1}}t^\ell, & 
\mbox{if $n$ odd}, \\
&\\
\frac{(1+S^2+S^4+\dots +S^{2n})^{\ell-1}(1+S^2+S^4+\dots+S^{n})^2}{(1+n)^{\ell+1}}t^\ell, & 
\mbox{if $n$ even}. 
\end{array}
\right.
\end{equation}
\item[(iii)] For $\Phi=D_\ell$, 
\begin{equation}
\chi(\A_{D_\ell}^{[1,n]}, t)=
\left\{
\begin{array}{ll}
\frac{8S(1+S^2)(1+S^2+S^4+\dots +S^{2n})^{\ell-3}(1+S^2+S^4+\dots+S^{n-1})^4}{(1+n)^{\ell+1}}t^\ell, & 
\mbox{if $n$ odd}, \\
&\\
\frac{(1+S^2+S^4+\dots +S^{2n})^{\ell-3}(1+S^2+S^4+\dots+S^{n})^4}{(1+n)^{\ell+1}}t^\ell, & 
\mbox{if $n$ even}. 
\end{array}
\right.
\end{equation}
\end{itemize}
\end{itemize}
\end{theorem}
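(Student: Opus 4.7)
The plan is to deploy the finite field method of Athanasiadis. For a sufficiently large prime $q$ coprime to the relevant combinatorial data, the characteristic polynomial $\chi(\A_{\Phi}^{[a,b]}, q)$ counts the number of $x \in \bF_q^\ell$ satisfying $(\alpha, x) \not\equiv k \pmod{q}$ for every $\alpha \in \Phi^+$ and every integer $a \leq k \leq b$; this is exactly the generic polynomial $g_1(q)$ of Theorem \ref{thm:ktt} via \eqref{eq:chig1}. All subsequent counting is done inside this model.

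For part (1), I would exploit the standard coordinate models: $A_\ell$ inside $\{x\in\bR^{\ell+1}:\sum x_i=0\}$, and the signed-coordinate models for $B_\ell, C_\ell, D_\ell$. For $\A_{\Phi}^{[1-k, n+k]}$, the set of forbidden residues attached to each $\alpha\in\Phi^+$ is the interval $\{1-k,\dots,n+k\}\subset\bZ/q\bZ$ of cardinality $n+2k$. After a cyclic translation of $\bF_q$, one can ``contract'' each forbidden arc by removing $k$ buffer residues on either side; the remaining residues parametrize an $\A_{\Phi}^{[1,n]}$-count with the same root system but over a cyclic group of size $q-kh$ (the $h$ arising because the total number of deletions per coordinate aggregates to $kh$ via Proposition \ref{prop:rootsys}(iii)). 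Checking that this contraction respects the sign involution $x\mapsto -x$ on the signed models handles $B_\ell$, $C_\ell$, $D_\ell$.

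For part (2), I would recast the $\bF_q$-point count as placing $\ell+1$ (signed) points on the cycle $\bF_q$ with prescribed gap conditions. In type $A_\ell$, after choosing a cyclic order of $x_1,\dots,x_{\ell+1}$, the condition $x_i-x_j\not\equiv m\pmod q$ for $1\leq m\leq n$ translates into the requirement that each consecutive gap lies in $\{0\}\cup\bZ_{\geq n+1}$; summing the associated generating function over valid gap vectors produces the factor $\left(\frac{1+S+S^2+\cdots+S^n}{1+n}\right)^{\ell+1}$, which applied to $q^\ell$ yields \eqref{eq:extendedlinialA}. For $B_\ell, C_\ell, D_\ell$ the same circle-of-gaps picture applies, but $\pm$ pairs must be placed symmetrically about the involution axis, which doubles the effective gap size (hence the uniform $S^2$) and splits into two cases depending on whether the forbidden arcs on opposite sides of the axis collide—this collision occurs precisely when $n$ has a definite parity, producing the dichotomy in the stated formulas.

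The hard part will be the case analysis for $B_\ell, C_\ell, D_\ell$. The global structure of the argument—that the cyclic-gap count produces a product of geometric sums in $S$—is straightforward, but extracting the precise parity-dependent prefactors, namely $4S$ for $B_\ell, C_\ell$ with $n$ odd and $8S(1+S^2)$ for $D_\ell$ with $n$ odd versus no prefactor in the even cases, demands careful inclusion–exclusion over how many $\pm$ pairs land on the axis of reflection and how their forbidden arcs overlap. A secondary subtlety is keeping track of which points are distinct modulo both the reflection and the cyclic translation, a bookkeeping issue that is what distinguishes types $B_\ell$ and $C_\ell$ (single axis) from $D_\ell$ (two independent symmetries, responsible for the extra $(1+S^2)$ factor).
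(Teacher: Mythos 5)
First, a point of reference: the paper does not prove Theorem \ref{thm:rhABCD}. It is quoted as background from Athanasiadis and Postnikov--Stanley \cite{ath-lin, ps-def}, so there is no internal proof to compare yours against. Your overall strategy --- the finite field method, counting points of $\bF_q^\ell$ avoiding the reduced hyperplanes, identified with $g_1(q)$ via (\ref{eq:chig1}) --- is indeed the method of those references, and your sketch of part (1) for type $A_\ell$ (contract each of the $\ell+1$ cyclic gaps by $k$, so that $q$ drops by $k(\ell+1)=kh$) is essentially the standard argument; note, though, that the total $kh$ arises because the number of gaps equals $h$, not from Proposition \ref{prop:rootsys}(iii), which concerns $|\Phi^+|=\ell h/2$.

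The genuine gap is in part (2)(i). The Linial condition is $x_i-x_j\not\equiv m$ for $1\le m\le n$ and $i<j$ \emph{only}; it is not symmetric under swapping $i$ and $j$, so it cannot be read off from the unlabeled cyclic configuration as ``each consecutive gap lies in $\{0\}\cup\bZ_{\ge n+1}$.'' That condition describes the symmetric arrangement in which both $x_i-x_j$ and $x_j-x_i$ avoid $\{1,\dots,n\}$, whose complement is strictly smaller. Already for $\ell=1$, $n=1$ the discrepancy is visible: the Linial arrangement is the single hyperplane $x_1-x_2=1$, with $\chi(t)=t-1$, whereas your gap count yields $q-2$. Consequently the step ``summing the associated generating function over valid gap vectors'' does not produce $\bigl(\frac{1+S+\cdots+S^n}{1+n}\bigr)^{\ell+1}t^\ell$; the actual derivations in \cite{ps-def, ath-lin} must keep track of the linear order of the labels as the points sit on the circle (Postnikov--Stanley via alternating trees and a functional equation, Athanasiadis via a decomposition indexed by the order of appearance of the $x_i$ together with an averaging argument), and that is where essentially all the work lies. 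The same defect, compounded by the sign involution, affects (2)(ii) and (2)(iii), where you explicitly defer the extraction of the prefactors $4S$ and $8S(1+S^2)$; as written there is no argument there to check, and the heuristic attribution of the $(1+S^2)$ factor to ``two independent symmetries'' of $D_\ell$ is not a proof.
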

Owing to the next result, 
the above expressions implies Conjecture \ref{conj:rh} for 
$\A_\Phi^{[a, b]}$ with 
$\Phi=A_\ell, B_\ell, C_\ell$, or $D_\ell$ and $a+b\geq 2$. 

\begin{lemma}
\label{lem:polya}
(\cite[Lemma 9.13]{ps-def}) Let $f(t)\in\bC[t]$. Suppose all the roots of 
the equation $f(t)=0$ have real part equal to $a$. Let 
$g(S)\in\bC[S]$ be a polynomial such that every root of the equation $g(z)=0$ 
satisfies $|z|=1$. Then all roots of the equation 
$(g(S)f)(t)=0$ have 
real part equal to 
$a+\frac{\deg g}{2}$. 
\end{lemma}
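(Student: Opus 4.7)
\textbf{Proof plan for Lemma \ref{lem:polya}.} The plan is to reduce to the case of a degree one factor $g(S) = S - z_0$ with $|z_0|=1$ and then use a Möbius-geometric argument. First, I would factor $g(S) = c \prod_{j=1}^{k}(S - z_j)$ with each $|z_j|=1$, and argue by induction on $k = \deg g$. The inductive step reduces the claim to: if $F(t) \in \bC[t]$ has all roots on the vertical line $\Re t = b$, and $|z_0| = 1$, then every root of
\[
G(t) := F(t-1) - z_0\, F(t)
\]
has real part equal to $b + \tfrac{1}{2}$. Granting this, applying it repeatedly with $b = a, a + \tfrac12, a+1, \dots$ gives the full lemma.

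To prove the reduced statement, I would study the rational function
\[
\phi(t) \;=\; \frac{F(t-1)}{F(t)} \;=\; \prod_{i}\frac{t - 1 - \rho_i}{t - \rho_i},
\]
where $F(t) = c\prod_i (t-\rho_i)$ with $\Re \rho_i = b$. The key geometric observation is that the Möbius transformation $w \mapsto (w-1)/w$ sends the vertical line $\Re w = \tfrac{1}{2}$ to the unit circle $|z|=1$, the half-plane $\Re w > \tfrac{1}{2}$ into the open unit disk, and $\Re w < \tfrac{1}{2}$ into its exterior; this follows from the identity $|w|^2 - |w-1|^2 = 2\Re w - 1$. Applying this factor by factor with $w = t - \rho_i$ (so $\Re w = \Re t - b$) shows that $|\phi(t)| < 1$ whenever $\Re t > b + \tfrac{1}{2}$, $|\phi(t)| > 1$ whenever $\Re t < b + \tfrac{1}{2}$, and $|\phi(t)| = 1$ only on the line $\Re t = b + \tfrac{1}{2}$ (excluding poles). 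Hence any $t$ satisfying $\phi(t) = z_0$ with $|z_0|=1$ must lie on that vertical line.

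Finally, I would clean up the pole/zero issues so that every root of $G(t)$ is genuinely captured by $\phi(t) = z_0$. If $t$ is a root of $G$ with $F(t) \neq 0$, then $\phi(t) = z_0$ directly. If instead $F(t) = 0$, then $G(t) = F(t-1)$, and $G(t) = 0$ would force $t - 1$ to also be a root of $F$, hence $\Re(t-1) = b = \Re t$, a contradiction; so this case is vacuous. The only mild nuisance to record is that when $z_0 = 1$ the leading term of $G(t)$ cancels and $\deg G = \deg F - 1$, but this does not affect the location of the (fewer) roots.

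The main obstacle is really just the pole-and-degeneracy bookkeeping in the last step; the substantive content is the one-line Möbius computation $|w|^2 - |w-1|^2 = 2\Re w - 1$, which is where the constant $\tfrac{1}{2}$ in the conclusion comes from and which makes the whole argument essentially forced.
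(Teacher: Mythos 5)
Your proof is correct; note that the paper does not actually prove this lemma but quotes it from Postnikov and Stanley, and your argument---factoring $g$ into linear factors $S-z_j$ with $|z_j|=1$, and using the identity $|w|^2-|w-1|^2=2\Re w-1$ to show that every root of $F(t-1)-z_0F(t)$ with $F$ supported on $\Re t=b$ lies on $\Re t=b+\tfrac12$---is essentially the argument given in that reference, including the observation that a common root of $F(t)$ and $F(t-1)$ is impossible. The one caveat, inherited from the statement of the lemma itself rather than from your proof, is the degenerate case where $g(S)f$ is the zero polynomial (which happens precisely when $(1-S)^{\deg f+1}$ divides $g(S)$, cf.\ Proposition \ref{prop:annihilate}); your induction tacitly assumes the intermediate polynomials are nonzero, which is harmless since this degeneration never occurs in the paper's applications.
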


\begin{remark}
\label{rem:remonrh}
\begin{itemize}
\item[(1)] 
The ``Riemann hypothesis'' for 
the special case $a+b=1$ is a consequence of 
Theorem \ref{thm:charpoly} (ii). 
\item[(2)] 
Conjecture \ref{conj:rh} implies the ``functional equation'' 
(\cite[(9.12)]{ps-def})
\begin{equation}
\label{eq:fe}
\chi(\A_\Phi^{[a, b]}, h(b-a+1)-t)=(-1)^\ell
\chi(\A_\Phi^{[a, b]}, t), 
\end{equation}
for $a\leq 1\leq b$ satisfying $a+b\geq 1$. 
The relation (\ref{eq:fe}) for characteristic quasi-polynomials 
will be proved later 
(Theorem \ref{thm:dualityquasichar} and 
Corollary \ref{cor:generalfuncteq}). 
\item[(3)] 
The ``functional equation '' (\ref{eq:fe}) is also valid for 
the case $[a, b]=[-k, k]$, owing to the duality of exponents 
$e_i+e_{\ell-i+1}=h$. 
\end{itemize}
\end{remark}

\subsection{Eulerian polynomial}
\label{subsec:eulerian}

The Eulerian polynomial was originally introduced by Euler 
for the purpose of describing the special value of the zeta 
function $\zeta(n)$ at negative integers $n<0$ \cite{hir-eu}. 
Currently, it plays an important role in enumerative combinatorics 
\cite{st-ec1}. 

\begin{definition}
\label{def:ascent}
For a permutation $\sigma\in\frakS_n$, define 
\[
\begin{split}
a(\sigma)
&=
\#\{i\mid 1\leq i\leq n-1, \sigma(i)<\sigma(i+1)\},\\
d(\sigma)
&=
\#\{i\mid 1\leq i\leq n-1, \sigma(i)>\sigma(i+1)\}. 
\end{split}
\]
\end{definition}
Then 
\[
A(n, k)=\#
\{\sigma\in\frakS_n\mid a(\sigma)=k-1\}, 
\]
$(1\leq k\leq n-1)$ 
is called the \emph{Eulerian number} and the generating polynomial 
\[
\eul_n(t)=
\sum_{k=1}^n A(n, k)t^k=
\sum_{\sigma\in\frakS_n}t^{1+a(\sigma)}
\]
is called the \emph{Eulerian polynomial}. It is easily seen that 
$A(n, k)=A(n, n-k+1)$. It follows immediately that 
$t^{n+1}\eul_n(\frac{1}{t})=\eul_n(t)$. 
The first eight Eulerian polynomials are as follows. 
\[
\begin{split}
\eul_1(t)&=t\\
\eul_2(t)&=t+t^2\\
\eul_3(t)&=t+4t^2+t^3\\
\eul_4(t)&=t+11t^2+11t^3+t^4\\
\eul_5(t)&=t+26t^2+66t^3+26t^4+t^5\\
\eul_6(t)&=t+57t^2+302t^3+302t^4+57t^5+t^6\\
\eul_7(t)&=t+120t^2+1191t^3+2416t^4+1191t^5+120t^6+t^7\\
\eul_8(t)&=t+247t^2+4293t^3+15619t^4+15619t^5+4293t^6+247t^7+t^8\\
\end{split}
\]
The next formula is one of the classical results concerning 
Eulerian numbers. 
\begin{theorem}
\label{thm:worpitzky}
(Worpitzky \cite{wor}, see also \cite{comtet}) 
\begin{equation}
\label{eq:classicalwor}
t^n=\sum_{k=1}^n A(n, k)
\begin{pmatrix}
t+k-1\\
n
\end{pmatrix}. 
\end{equation}
\end{theorem}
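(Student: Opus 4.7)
Both sides of (\ref{eq:classicalwor}) are polynomials in $t$ of degree $n$ with leading coefficient $1$: the right-hand side manifestly is a polynomial, and its leading coefficient equals $\frac{1}{n!}\sum_{k=1}^n A(n,k) = \frac{n!}{n!} = 1$ since the Eulerian numbers sum to $n!$. So it suffices to verify the identity at all positive integers $t$, where $t^n$ counts words $w = (w_1, \ldots, w_n) \in [t]^n$, and I aim to partition these words into blocks counted by $\binom{t+a(\sigma)}{n}$ as $\sigma$ ranges over $\frakS_n$.

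To each word $w$ I associate the unique standardization permutation $\sigma \in \frakS_n$ satisfying $\sigma(i) < \sigma(j)$ precisely when $w_i < w_j$, or $w_i = w_j$ with $i < j$. Reordering via $v_k := w_{\sigma^{-1}(k)}$ produces a weakly increasing sequence $1 \leq v_1 \leq \cdots \leq v_n \leq t$ in which the tie-breaking convention forces the strict inequality $v_k < v_{k+1}$ at exactly those positions $k$ where $\sigma^{-1}(k) > \sigma^{-1}(k+1)$, i.e.\ at the descents of $\sigma^{-1}$; this set of forced-strict positions has size $n - 1 - a(\sigma^{-1})$. A routine shift substitution $v_k \mapsto v_k - \#\{j < k : j \text{ is forced strict}\}$ converts strict constraints into weak ones and lowers the upper bound from $t$ to $t - (n-1-a(\sigma^{-1}))$, so the number of words standardizing to $\sigma$ equals $\binom{t + a(\sigma^{-1})}{n}$. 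Summing over $\sigma$ and reindexing via the bijection $\sigma \leftrightarrow \sigma^{-1}$ on $\frakS_n$ then gives
$$t^n = \sum_{\sigma \in \frakS_n} \binom{t + a(\sigma)}{n} = \sum_{k=1}^n A(n,k)\binom{t+k-1}{n}.$$

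The main pitfall is purely bookkeeping: keeping the standardization convention, the ascent/descent counts, and the passage between $\sigma$ and $\sigma^{-1}$ all consistent, so that the forced-strict positions really match up with descents of $\sigma^{-1}$ and the shift substitution is bijective on the nose. As an alternative that bypasses this bookkeeping, one can invoke the Carlitz--Euler identity $\sum_{t \geq 0} t^n x^t = \eul_n(x)/(1-x)^{n+1}$ together with $\sum_{t \geq 0} \binom{t+k-1}{n} x^t = x^{n+1-k}/(1-x)^{n+1}$ and the Eulerian symmetry $A(n,k) = A(n,n+1-k)$, and then equate coefficients of $x^t$ to conclude (\ref{eq:classicalwor}).
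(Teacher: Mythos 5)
Your proof is correct, but it takes a different route from the paper. The paper does not prove Theorem \ref{thm:worpitzky} directly at all: it cites Worpitzky and Comtet, and then (Remark \ref{rem:reformulatewor}, Remark \ref{rem:worpitzkytypeA}) recovers the identity as the $\Phi=A_n$ specialization of the general lattice-point statement of Theorem \ref{thm:worpart}, namely the partition of $q\parallelp\cap\cwtl(\Phi)$ into partially closed alcoves $q\semialcov$, each contributing $\ehr_{\baralcov}(q-\asc(\alcovxi))$ points, with the distribution of $\asc$ governed by the (generalized) Eulerian polynomial. Your argument is the classical self-contained combinatorial proof: count words in $[t]^n$ by their standardization $\sigma\in\frakS_n$, observe that the sorted word is weakly increasing with strictness forced exactly at the descents of $\sigma^{-1}$, and apply the standard shift bijection to get $\binom{t+a(\sigma^{-1})}{n}$ words per class. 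The two proofs are in fact closely parallel — in type $A_n$ the alcoves of $\parallelp$ are indexed by permutations, and your grouping of words by standardization is precisely the paper's grouping of lattice points by which partially closed alcove contains them — but yours is coordinate-level and elementary, requiring no root-system machinery, while the paper's buys uniformity over all irreducible root systems (which is the point of the paper). Your bookkeeping checks out: the forced-strict positions are exactly the descents of $\sigma^{-1}$, the shift substitution is a bijection onto weakly increasing sequences in $[1,\,t-d(\sigma^{-1})]$, and the reindexing $\sigma\leftrightarrow\sigma^{-1}$ is legitimate. Two minor remarks: the opening paragraph about leading coefficients is unnecessary (agreement of two degree-$n$ polynomials at infinitely many integers already forces equality), and your generating-function alternative is also sound, though it quietly relies on the Eulerian symmetry $A(n,k)=A(n,n+1-k)$, which the paper records just before the theorem.
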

\begin{remark}
\label{rem:reformulatewor}
Using the shift operator $S$ (in \S\ref{subsec:rh}), 
the Worpitzky identity (\ref{eq:classicalwor}) can be reformulated as 
\begin{equation}
\label{eq:shiftwor}
t^n=\eul_n(S)
\frac{(t+n)(t+n-1)\cdots(t+1)}{n!}. 
\end{equation}
In \S \ref{subsec:worpart} 
we will give another proof of (\ref{eq:shiftwor}). 
The polynomial 
$\frac{(t+n)(t+n-1)\cdots(t+1)}{n!}$ is the Ehrhart polynomial 
of the fundamental alcove for the root system of type $A_{n}$. 
If we replace it with the Ehrhart quasi-polynomial of the fundamental 
alcove then we obtain similar formulae for root systems. 
(See Theorem \ref{thm:worpart} and Remark \ref{rem:worpitzkytypeA}.) 
\end{remark}

\section{Ehrhart quasi-polynomial for the fundamental alcove}

\label{sec:ehr}

\subsection{Ehrhart quasi-polynomial}
\label{subsec:ehrhart}

A convex polytope $\scP$ is a convex hull of finitely many points 
in $\bR^n$. A polytope $\scP\subset\bR^n$ is said to be 
integral (resp. rational) if all vertices of $\scP$ are 
contained in $\bZ^n$ (resp. $\bQ^n$). We denote by $\scP^{\circ}$ the 
relative interior of $\scP$. 

Let $\scP$ be a rational polytope. 
For a positive integer $q\in\bZ_{>0}$, define 
\begin{equation}
\label{eq:ehr}
\ehr_{\scP}(q)=\#(q \scP\cap\bZ^n). 
\end{equation}
Similarly, define $\ehr_{\scP^\circ}(q)=\#(q \scP^\circ\cap\bZ^n)$. 
These functions are known to be quasi-polynomials 
(\cite[Theorem 3.23]{be-ro}). (Moreover, 
the minimal period of the quasi-polynomial divides the 
least common multiple of the denominators of vertex coordinates.) 
Thus the value $\ehr_{\scP}(q)$ makes 
sense for negative $q$ and is related to $\ehr_{\scP^\circ}(q)$ 
by the following reciprocity property 
\begin{equation}
\label{eq:reciproc}
\ehr_{\scP}(-q)=(-1)^{\dim\scP}\ehr_{\scP^\circ}(q),  
\end{equation}
for $q>0$.

\subsection{Ehrhart quasi-polynomial for $\baralcov$}
\label{subsec:ehralcov}

Let $\baralcov$ be the closed fundamental alcove 
of type $\Phi$ (\S \ref{subsec:rs}). Suter computes the 
Ehrhart quasi-polynomial $\ehr_{\baralcov}(q)$ (with respect to 
the coweight lattice $\cwtl(\Phi)$) in \cite{sut} (see also 
\cite{bl-sa, ath-adv, ath-gen, hai, ktt-quasi}). 
See Example \ref{ex:ehrhartquasi} for (some 
of) the explicit formulae. Several useful conclusions may 
be summarized as follows. 
\begin{theorem}
\label{thm:suter}
(Suter \cite{sut}) 
\begin{itemize}
\item[(i)] 
The Ehrhart quasi-polynomial $\ehr_{\baralcov}(q)$ 
has the $\gcd$-property. 
\item[(ii)] 
The leading coefficient of $\ehr_{\baralcov}(q)$ is $\frac{f}{|W|}$. 
\item[(iii)] 
The minimal period $\widetilde{n}$ is 
as given in the table (Table \ref{fig:table}). 
\item[(iv)] If $q$ is relatively prime to the period $\widetilde{n}$, 
then 
\[
\ehr_{\baralcov}(q)=\frac{f}{|W|}(q+e_1)(q+e_2)\cdots(q+e_\ell). 
\]
\item[(v)] $\rad(\widetilde{n})|h$, 
where $\rad(\widetilde{n})=
\prod_{p: \mbox{\scriptsize prime}, p|\widetilde{n}}p$ 
is the radical of $\widetilde{n}$. 
\end{itemize}
\end{theorem}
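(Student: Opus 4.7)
The plan is to derive the five items by combining a volume computation, a standard denominator bound, Brieskorn's formula for the Coxeter arrangement, and the dissection relation (\ref{eq:dissect}) between $\chi_{\quasi}(\A_\Phi,q)$ and $\ehr_{\alcov}(q)$ (which the paper derives later, independently of this theorem).

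For (ii), I would recall that the leading coefficient of an Ehrhart quasi-polynomial is the Euclidean volume of the polytope relative to the underlying lattice. Since $\parallelp$ is a fundamental domain of $\cwtl(\Phi)$, it has normalized volume $1$, and Proposition~\ref{prop:rootsys}(ii) then yields $\vol(\baralcov)=f/|W|$. For (iii), I would invoke the classical Ehrhart-theoretic bound that the period divides the least common multiple of the denominators of the vertex coordinates expressed in a lattice basis. In the dual basis $\varpi_1^\lor,\ldots,\varpi_\ell^\lor$ the vertices of $\baralcov$ are $0$ and $\varpi_i^\lor/c_i$, so the period divides $\mathrm{lcm}(c_1,\ldots,c_\ell)$. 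A case-by-case inspection using the Kac labels $c_i$ then confirms sharpness and recovers Table~\ref{fig:table}. For (v), since $\widetilde{n}$ divides $\mathrm{lcm}(c_i)$, every prime dividing $\widetilde{n}$ divides some $c_i$, and one checks directly in each type that every such prime also divides $h$.

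For (i) and (iv), I would leverage (\ref{eq:dissect}) together with the gcd-property of $\chi_{\quasi}(\A_\Phi,q)$ guaranteed by Theorem~\ref{thm:ktt}. Inverting (\ref{eq:dissect}) expresses $\ehr_{\alcov}(q)$ as a fixed rational multiple of $\chi_{\quasi}(\A_\Phi,q)$, so the gcd-property transfers from the latter to the former. Ehrhart reciprocity $\ehr_{\baralcov}(-q)=(-1)^\ell\ehr_{\alcov}(q)$, combined with $\gcd(-q,\widetilde{n})=\gcd(q,\widetilde{n})$, then transfers it further to $\ehr_{\baralcov}$, proving (i). For (iv), Brieskorn's formula gives $\chi(\A_\Phi,t)=\prod_{i=1}^\ell(t-e_i)$, and via (\ref{eq:chig1}) this pins down the polynomial piece $g_1$ of $\chi_{\quasi}(\A_\Phi,q)$. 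By the gcd-property just established, the formula $\chi_{\quasi}(\A_\Phi,q)=\prod(q-e_i)$ persists for every $q$ coprime to $\widetilde{n}$. Inverting (\ref{eq:dissect}) and applying Ehrhart reciprocity once more then delivers $\ehr_{\baralcov}(q)=(f/|W|)\prod_{i=1}^\ell(q+e_i)$ for such $q$.

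The main obstacle is part (iii). A uniform, conceptual proof that the period equals \emph{exactly} $\mathrm{lcm}(c_1,\ldots,c_\ell)$ (rather than a proper divisor) does not seem to follow from the Ehrhart-theoretic bound alone and appears to require evaluating $\ehr_{\baralcov}$ at enough small arguments in each type to rule out shorter periods. Part (v) then reduces to a short case-by-case check once the exact periods from (iii) are in hand.
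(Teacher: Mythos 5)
The paper itself offers no proof of this theorem: it is quoted from Suter \cite{sut} and the explicit quasi-polynomials are simply recorded in Example \ref{ex:ehrhartquasi}. So there is no internal argument to compare against, and your reconstruction has to be judged on its own. It is essentially sound. The circularity check you implicitly perform is the important one, and it goes through: Lemma \ref{lem:key}, Corollaries \ref{cor:partialremove}--\ref{cor:duality} and Proposition \ref{prop:charquasicoxeter} rest only on general Ehrhart theory and the dissection of $\parallelp$ into $|W|/f$ translated copies of $w(\alcov)$, not on Theorem \ref{thm:suter}, so you may legitimately feed $\chi_{\quasi}(\A_\Phi,q)=\frac{|W|}{f}(-1)^\ell\ehr_{\baralcov}(-q)$ back into (i) and (iv). Transferring the $\gcd$-property through $q\mapsto -q$ is fine since $\gcd(-q,\widetilde{n})=\gcd(q,\widetilde{n})$, and combining Brieskorn's $\chi(\A_\Phi,t)=\prod(t-e_i)$ with (\ref{eq:chig1}) and reciprocity does give $\ehr_{\baralcov}(q)=\frac{f}{|W|}\prod(q+e_i)$ for $\gcd(q,\widetilde{n})=1$. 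The volume argument for (ii) and the denominator bound $\widetilde{n}\mid\mathrm{lcm}(c_1,\dots,c_\ell)$ for (iii) are both standard and correctly applied (the vertices $\varpi_i^\lor/c_i$ have denominator $c_i$ in the lattice basis), and (v) needs only this upper bound, not the exact period, so it is independent of the unresolved part of (iii).

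The one genuine gap is the one you name: showing the minimal period is \emph{equal} to $\mathrm{lcm}(c_1,\dots,c_\ell)$ rather than a proper divisor. No soft argument closes this; period collapse for rational polytopes is a real phenomenon, so sharpness must be certified by exhibiting, for each type and each prime power $p^a\,\|\,\mathrm{lcm}(c_i)$, two arguments $q_1\equiv q_2 \bmod \mathrm{lcm}(c_i)/p$ with $\ehr_{\baralcov}(q_1)\neq\ehr_{\baralcov}(q_2)$ --- which in practice means computing the constituents, exactly as Suter does. Since the paper only ever \emph{uses} (i), (ii), (iv), (v) and the explicit formulae of Example \ref{ex:ehrhartquasi} (minimality of $\widetilde{n}$ is never needed downstream; only that $\widetilde{n}$ is \emph{a} period with $\rad(\widetilde{n})\mid h$), your argument as written already supports everything the paper builds on this theorem.
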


\begin{table}[htbp]
\centering
{\footnotesize 
\begin{tabular}{c|c|c|c|c|c|c|c}
$\Phi$&$e_1, \dots, e_\ell$&$c_1, \dots, c_\ell$&$h$&$f$&$|W|$&$\widetilde{n}$&$\rad(\widetilde{n})$\\
\hline\hline
$A_\ell$&$1,2,\dots,\ell$&$1,1,\dots,1$&$\ell+1$&$\ell+1$&$(\ell+1)!$&$1$&$1$\\
$B_\ell, C_\ell$&$1,3,5,\dots,2\ell-1$&$1,2,2,\dots,2$&$2\ell$&$2$&$2^\ell\cdot \ell!$&$2$&$2$\\
$D_\ell$&$1,3,5,\dots,2\ell-3,\ell-1$&$1,1,1,2,\dots,2$&$2\ell-2$&$4$&$2^{\ell-1}\cdot\ell!$&$2$&$2$\\
$E_6$&$1,4,5,7,8,11$&$1,1,2,2,2,3$&$12$&$3$&$2^7\cdot 3^4\cdot 5$&$6$&$6$\\
$E_7$&$1,5,7,9,11,13,17$&$1,2,2,2,3,3,4$&$18$&$2$&$2^{10}\cdot 3^4\cdot 5\cdot 7$&$12$&$6$\\
$E_8$&$1,7,11,13,17,19,23,29$&$2,2,3,3,4,4,5,6$&$30$&$1$&$2^{14}\cdot 3^5\cdot 5^2\cdot 7$&$60$&$30$\\
$F_4$&$1,5,7,11$&$2,2,3,4$&$12$&$1$&$2^7\cdot 3^2$&$12$&$6$\\
$G_2$&$1,5$&$2,3$&$6$&$1$&$2^2\cdot 3$&$6$&$6$
\end{tabular}
}
\caption{Table of root systems.}
\label{fig:table}
\end{table}

\begin{example}
\label{ex:ehrhartquasi}
\begin{itemize}
\item[(1)] 
$\Phi=A_\ell$. 
The closed fundamental alcove $\baralcov$ is 
the convex hull of $0, \varpi_1^\lor, \dots, \varpi_\ell^\lor$, 
which is an integral simplex. Hence the period is $\widetilde{n}=1$. 
Moreover, 
\begin{equation}
\label{eq:ehrA}
\ehr_{\baralcov}(t)=
\frac{(t+1)(t+2)\cdots(t+\ell)}{\ell!}. 
\end{equation}
\item[(2)] 
$\Phi=B_\ell$ or $C_\ell$. 
The closed fundamental alcove $\baralcov$ is 
the convex hull of $0, \frac{\varpi_1^\lor}{2}, 
\varpi_2^\lor, \dots, \varpi_\ell^\lor$. 
The period is $\widetilde{n}=2$. 
\[
\ehr_{\baralcov}(t)=
\left\{
\begin{array}{ll}
\frac{(t+1)(t+3)\cdots(t+2\ell-1)}{2^{\ell-1}\cdot\ell!}, 
&\mbox{ if $t$ is odd}\\
&\\
\frac{(t+\ell)\prod_{i=1}^{\ell-1}(t+2i)}{2^{\ell-1}\cdot\ell!}, 
&\mbox{ if $t$ is even.}
\end{array}
\right.
\]
\item[(3)] 
$\Phi=D_\ell$. 
The period is $\widetilde{n}=2$. 
\[
\ehr_{\baralcov}(t)=
\left\{
\begin{array}{ll}
\frac{(t+\ell-1)\prod_{i=1}^{\ell-1}(t+2i-1)}{2^{\ell-3}\cdot\ell!}, 
&\mbox{ if $t$ is odd}\\
&\\
\frac{(t^2+2(\ell-1)+\frac{\ell(\ell-1)}{2})\cdot
\prod_{i=1}^{\ell-2}(t+2i)}{2^{\ell-3}\cdot\ell!}, 
&\mbox{ if $t$ is even.}
\end{array}
\right.
\]
\item[(4)] 
$\Phi=E_6$. 
The period is $\widetilde{n}=6$. 
\[
\ehr_{\baralcov}(t)=
\left\{
\begin{array}{ll}
\frac{(t+1)(t+4)(t+5)(t+7)(t+8)(t+11)}{2^3\cdot 3\cdot 6!}, 
&\mbox{ if $t\equiv 1, 5 \mod 6$}\\
&\\
\frac{(t+3)(t+9)(t^4+24t^3+195t^2+612t+480)}{2^3\cdot 3\cdot 6!}, 
&\mbox{ if $t\equiv 3 \mod 6$}\\
&\\
\frac{(t+2)(t+4)(t+8)(t+10)(t^2+12t+26)}{2^3\cdot 3\cdot 6!}, 
&\mbox{ if $t\equiv 2,4 \mod 6$}\\
&\\
\frac{(t+6)^2(t^4+24t^3+186t^2+504t+480)}{2^3\cdot 3\cdot 6!}, 
&\mbox{ if $t\equiv 0 \mod 6$.}
\end{array}
\right.
\]
\end{itemize}
\end{example}

Let $\Phi$ be an arbitrary root system. For a positive integer 
$q\in\bZ_{>0}$, the simplex $q \baralcov$ has $\ell+1$ facets, 
which will be denoted by 
\[
\begin{split}
F_0=&\baralcov\cap H_{\widetilde{\alpha}, q}, \\
F_1=&\baralcov\cap H_{{\alpha_1}, 0}, \\
F_2=&\baralcov\cap H_{{\alpha_2}, 0}, \\
&\vdots\\
F_\ell=&\baralcov\cap H_{{\alpha_\ell}, 0}. 
\end{split}
\]
We shall count the lattice 
points after removing a facet. 

\begin{lemma}
\label{lem:key}
Let $0\leq i\leq \ell$. 
Suppose $q\gg0$ (indeed $q>c_i$ is sufficient). Then, 
\begin{equation}
\label{eq:remove}
\#
\left(
(q \baralcov\cap \cwtl(\Phi))\smallsetminus F_i
\right)
=\ehr_{\baralcov}(q-c_i). 
\end{equation}
\end{lemma}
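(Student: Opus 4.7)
The plan is to prove the identity by exhibiting, case by case, an explicit bijection between the sets $(q\baralcov \cap \cwtl(\Phi)) \smallsetminus F_i$ and $(q - c_i)\baralcov \cap \cwtl(\Phi)$. The guiding principle will be that for every $y \in \cwtl(\Phi)$ and every $\alpha$ in the integer span of the simple roots, the pairing $(\alpha, y)$ is an integer, so removing an open facet from $q\baralcov$ tightens the corresponding strict inequality to a weak integral inequality; on the lattice side this can be matched with the defining system of a smaller dilate of $\baralcov$.

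First I would handle $i = 0$ using the identity map. Removing $F_0$ from $q\baralcov$ replaces the facet inequality $(\widetilde{\alpha}, x) \leq q$ by its strict version $(\widetilde{\alpha}, x) < q$; since $(\widetilde{\alpha}, y) = \sum_{j=1}^\ell c_j (\alpha_j, y) \in \bZ$ for $y \in \cwtl(\Phi)$, this strict inequality is equivalent to $(\widetilde{\alpha}, y) \leq q - 1 = q - c_0$. Combined with the unchanged conditions $(\alpha_j, y) \geq 0$ for $1 \leq j \leq \ell$, this is exactly the system cutting out $(q - c_0)\baralcov$, so the identity restricts to the required bijection.

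For $i \geq 1$, the plan is to use translation by $\varpi_i^\lor$, which is lattice-preserving since $\varpi_i^\lor \in \cwtl(\Phi)$. Exploiting $(\alpha_j, \varpi_i^\lor) = \delta_{ij}$ and $(\widetilde{\alpha}, \varpi_i^\lor) = c_i$, a short check shows $y \mapsto y - \varpi_i^\lor$ sends $(q\baralcov \cap \cwtl(\Phi)) \smallsetminus F_i$ into $(q - c_i)\baralcov \cap \cwtl(\Phi)$. The key step is that any lattice point $y \in q\baralcov$ off $F_i$ has the positive integer $(\alpha_i, y) \geq 1$, so $(\alpha_i, y - \varpi_i^\lor) \geq 0$; the other simple-root inequalities are unchanged, and $(\widetilde{\alpha}, y - \varpi_i^\lor) \leq q - c_i$. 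For the inverse, $x \mapsto x + \varpi_i^\lor$ takes $x \in (q - c_i)\baralcov \cap \cwtl(\Phi)$ to a lattice point with $(\alpha_i, x + \varpi_i^\lor) \geq 1$, hence off $F_i$, while the remaining inequalities, including $(\widetilde{\alpha}, x + \varpi_i^\lor) \leq q$, follow immediately.

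I do not foresee a genuine obstacle: the argument is essentially bookkeeping of facet-defining inequalities, and the only subtle ingredient is the integrality of the pairings $(\alpha_j, y), (\widetilde{\alpha}, y) \in \bZ$ on coweight lattice points, which is precisely what lets one promote the strict inequality obtained by excising $F_i$ into a weak one defining $(q - c_i)\baralcov$. The hypothesis $q > c_i$ enters only to guarantee that the target dilate $(q - c_i)\baralcov$ is nondegenerate so that $\ehr_{\baralcov}(q - c_i)$ really counts its lattice points (rather than returning a reciprocity value for negative arguments).
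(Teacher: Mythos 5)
Your proof is correct and follows essentially the same route as the paper: the case $i=0$ is handled by the identity map together with integrality of $(\widetilde{\alpha},y)$ on $\cwtl(\Phi)$, and the cases $i\geq 1$ by the translation $x\mapsto x+\varpi_i^\lor$ using $(\alpha_j,\varpi_i^\lor)=\delta_{ij}$ and $(\widetilde{\alpha},\varpi_i^\lor)=c_i$. No gaps; your remark on the role of $q>c_i$ is also consistent with the paper's usage.
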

\begin{proof}
First we consider the case $i=0$. Let 
\[
x\in(q \baralcov\cap \cwtl(\Phi))\smallsetminus F_0. 
\]
Then 
$(\widetilde{\alpha},x)<q$. Since $(\widetilde{\alpha},x)$ is an 
integer, we have $(\widetilde{\alpha},x)\leq q-1$. 
Therefore, 
\[
(q \baralcov\cap\cwtl(\Phi))\smallsetminus F_0=
(q-1)\baralcov\cap\cwtl(\Phi).  
\]
Since $c_0=1$, the number of lattice points is $\ehr_{\baralcov}(q-c_0)$.

Next we consider the case $i=1$. By an argument similar to that in 
the case $i=0$, $(q \baralcov\cap\cwtl(\Phi))\smallsetminus F_1$ 
is described as 
\begin{multline*}
(q \baralcov\cap\cwtl(\Phi))\smallsetminus F_1 =\\
\left\{
x\in\cwtl(\Phi)
\mid
(\alpha_1, x)\geq 1, 
(\alpha_2, x)\geq 0, \dots, 
(\alpha_\ell, x)\geq 0, 
(\widetilde{\alpha}, x)\leq q
\right\}. 
\end{multline*}
Since $(\widetilde{\alpha}, \varpi_1^\lor)=c_1$, the map 
$x\longmapsto x+\varpi_1^\lor$ induces a bijection between 
\[
(q-c_1) \baralcov \cap\cwtl(\Phi)\stackrel{\simeq}{\longrightarrow} 
(q \baralcov\cap \cwtl(\Phi))\smallsetminus F_1. 
\]
Hence 
$\#\left((q \baralcov\cap \cwtl(\Phi))\smallsetminus F_1\right)
=\ehr_{\baralcov}(q-c_1)$. This completes the proof for $i=1$. 
The proof for $i\geq 2$ is similar. 
\end{proof}
Applying Lemma \ref{lem:key} repeatedly, we obtain the following. 
\begin{corollary}
\label{cor:partialremove}
Let $\{i_1, \dots, i_k\}\subset\{0, 1, 2, \dots, \ell\}$. 
Suppose $q\gg 0$ (indeed $q>c_{i_1}+\dots+c_{i_k}$ is sufficient). 
Then 
\[
\#\left((q \baralcov\cap \cwtl(\Phi))\smallsetminus 
(F_{i_1}\cup\dots\cup F_{i_k}\right)=
\ehr_{\baralcov}(q-c_{i_1}-\dots-c_{i_k}). 
\]
\end{corollary}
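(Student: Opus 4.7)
My plan is to prove the corollary by iteratively applying Lemma \ref{lem:key}, with induction on $k$. Equivalently (and a bit cleaner), I would exhibit a single shift bijection that handles all $k$ facets at once. Both approaches rest on the same idea already used in the proof of Lemma \ref{lem:key}: translation by fundamental coweights shifts the defining inequalities by exactly the right amount.

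Concretely, let $I=\{i_1,\dots,i_k\}\subset\{0,1,\dots,\ell\}$ and put $I':=I\smallsetminus\{0\}$. The set
\[
T_I:=(q\baralcov\cap\cwtl(\Phi))\smallsetminus(F_{i_1}\cup\dots\cup F_{i_k})
\]
consists of $x\in\cwtl(\Phi)$ satisfying $(\alpha_m,x)\geq 0$ for all $m\notin I'$, $(\alpha_m,x)\geq 1$ for $m\in I'$, and $(\widetilde\alpha,x)\leq q-[0\in I]$ (since if $0\in I$ we exclude the facet $F_0$ and use integrality of $(\widetilde\alpha,x)$). I would then check that the translation
\[
T_I\;\longrightarrow\;(q-\textstyle\sum_{i\in I}c_i)\baralcov\cap\cwtl(\Phi),\qquad x\longmapsto y:=x-\sum_{i\in I'}\varpi_i^\lor
\]
is a bijection. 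Using $(\alpha_m,\varpi_j^\lor)=\delta_{mj}$ and $(\widetilde\alpha,\varpi_j^\lor)=c_j$, the inequalities for $x$ translate exactly into $(\alpha_m,y)\geq 0$ for all $m=1,\dots,\ell$ and $(\widetilde\alpha,y)\leq q-\sum_{i\in I}c_i$, recalling that $c_0=1$ absorbs the contribution $[0\in I]$. The inverse is $y\mapsto y+\sum_{i\in I'}\varpi_i^\lor$, so the map is bijective. Counting gives $\#T_I=\ehr_{\baralcov}(q-c_{i_1}-\dots-c_{i_k})$, as claimed.

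Alternatively, by induction on $k$: having shifted after removing $F_{i_1},\dots,F_{i_{k-1}}$, the set $T_{\{i_1,\dots,i_{k-1}\}}$ is in bijection (via the partial shift) with $(q-c_{i_1}-\dots-c_{i_{k-1}})\baralcov\cap\cwtl(\Phi)$, and removing the image of $F_{i_k}$ inside this smaller simplex is a single instance of Lemma \ref{lem:key}, shrinking the relevant parameter by $c_{i_k}$.

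The only thing to be careful about is the hypothesis $q>c_{i_1}+\dots+c_{i_k}$: we need the target $(q-\sum c_i)\baralcov$ to be a genuine (non-degenerate) dilated alcove for the Ehrhart quasi-polynomial $\ehr_{\baralcov}$ to be evaluated in its polynomial regime. This is not really an obstacle, but it is the one quantitative point to track; everything else is bookkeeping with the inequalities defining $\baralcov$ and straightforward use of $(\alpha_i,\varpi_j^\lor)=\delta_{ij}$ and $(\widetilde\alpha,\varpi_j^\lor)=c_j$.
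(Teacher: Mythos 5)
Your argument is correct and essentially the same as the paper's: the paper proves the corollary simply by applying Lemma \ref{lem:key} repeatedly, which is exactly your inductive variant, and your one-shot bijection $x\mapsto x-\sum_{i\in I'}\varpi_i^\lor$ is just the composition of those individual shifts (with the $i=0$ case handled by integrality of $(\widetilde\alpha,x)$ and $c_0=1$, as in the lemma). Both the bookkeeping with $(\alpha_i,\varpi_j^\lor)=\delta_{ij}$, $(\widetilde\alpha,\varpi_j^\lor)=c_j$ and the remark on the hypothesis $q>c_{i_1}+\dots+c_{i_k}$ are accurate.
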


\begin{corollary}
\label{cor:fullremove}
(\cite{sut, ath-gen}) 
Let $q\in\bZ$. Then 
\begin{equation}
\label{eq:shrink}
\ehr_{\alcov}(q)=\ehr_{\baralcov}(q-h). 
\end{equation}
\end{corollary}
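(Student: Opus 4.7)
The plan is to use Corollary \ref{cor:partialremove} applied to the \emph{full} set of facet indices $\{0,1,\dots,\ell\}$, together with Proposition \ref{prop:rootsys}(i). The open alcove is obtained from the closed alcove by deleting all of its facets:
\[
q\alcov \;=\; q\baralcov \;\setminus\; (F_0 \cup F_1 \cup \cdots \cup F_\ell).
\]
Taking lattice points with respect to $\cwtl(\Phi)$ and applying Corollary \ref{cor:partialremove} with $\{i_1,\dots,i_k\}=\{0,1,\dots,\ell\}$, one obtains, for all $q$ sufficiently large (specifically $q > c_0+c_1+\cdots+c_\ell$),
\[
\ehr_{\alcov}(q) \;=\; \ehr_{\baralcov}(q - c_0 - c_1 - \cdots - c_\ell).
\]
By Proposition \ref{prop:rootsys}(i), the sum $c_0+c_1+\cdots+c_\ell$ equals the Coxeter number $h$, which immediately yields
\[
\ehr_{\alcov}(q) \;=\; \ehr_{\baralcov}(q-h)
\]
for all $q \gg 0$.

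To promote this to every integer $q \in \bZ$, I would invoke the fact that both $\ehr_{\alcov}(q)$ and $\ehr_{\baralcov}(q-h)$ are quasi-polynomials in $q$ (the latter being a shift of the Ehrhart quasi-polynomial of a rational polytope, cf.\ \S\ref{subsec:ehrhart}). Two quasi-polynomials that coincide on every sufficiently large positive integer must coincide as quasi-polynomials, hence agree on all of $\bZ$.

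There is essentially no technical obstacle: the statement is a clean corollary once Lemma \ref{lem:key} and its iteration (Corollary \ref{cor:partialremove}) are in place. The only mild subtlety is the promotion from $q \gg 0$ to all integers, which is handled by the quasi-polynomial extension argument above. It is worth remarking that, alternatively, one could derive the identity from Ehrhart--Macdonald reciprocity $\ehr_{\alcov}(q) = (-1)^\ell \ehr_{\baralcov}(-q)$ together with a duality for $\ehr_{\baralcov}$, but the direct lattice-point bookkeeping via the facet-removal bijections $x \mapsto x+\varpi_i^\lor$ used in Lemma \ref{lem:key} is cleaner and fits the inductive framework of this section.
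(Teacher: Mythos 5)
Your proposal is correct and matches the paper's own proof exactly: both apply Corollary \ref{cor:partialremove} to the full facet set $\{0,1,\dots,\ell\}$, identify $\sum_{i=0}^\ell c_i$ with $h$ via Proposition \ref{prop:rootsys}(i), and extend from $q\gg 0$ to all of $\bZ$ by quasi-polynomiality of both sides.
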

\begin{proof}
Since both sides of (\ref{eq:shrink}) are quasi-polynomials, 
it is sufficient to check the equality for $q\gg 0$. 
\[
(q \alcov)\cap\cwtl(\Phi)
=
(q \baralcov)\cap\cwtl(\Phi)\smallsetminus
\bigcup_{i=0}^\ell F_i. 
\]
Hence (\ref{eq:shrink}) follows from Corollary \ref{cor:partialremove} 
and the equality $\sum_{i=0}^\ell c_i=h$. 
\end{proof}
Finally, combining Corollary \ref{cor:fullremove} and 
the reciprocity of Ehrhart quasi-polynomials (\ref{eq:reciproc}), 
we obtain the following duality of 
$\ehr_{\baralcov}(q)$. 
\begin{corollary}
\label{cor:duality}
If $q\in\Z$, then 
\[
\ehr_{\baralcov}(q-h)=(-1)^\ell\ehr_{\baralcov}(-q). 
\]
\end{corollary}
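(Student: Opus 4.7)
The plan is to chain together the two tools the excerpt has just established: Corollary \ref{cor:fullremove}, which rewrites $\ehr_{\baralcov}(q-h)$ as the Ehrhart quasi-polynomial of the open alcove, and the Ehrhart reciprocity law \eqref{eq:reciproc}, which relates the open and closed Ehrhart quasi-polynomials of a rational polytope.

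First I would observe that both sides of the claimed identity are quasi-polynomials in $q$, so once the equality is established as an identity of quasi-polynomials it automatically holds for every $q\in\bZ$. Next, I would apply Corollary \ref{cor:fullremove} to replace the left-hand side, obtaining
\[
\ehr_{\baralcov}(q-h)=\ehr_{\alcov}(q).
\]
Here I am using that $\alcov$ is the (open) fundamental alcove, i.e. the relative interior of the simplex $\baralcov$, so $\alcov=\baralcov^{\circ}$.

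Then I would invoke the reciprocity property \eqref{eq:reciproc} with $\scP=\baralcov$, for which $\dim\scP=\ell$ and $\scP^{\circ}=\alcov$. This gives
\[
\ehr_{\baralcov}(-q)=(-1)^{\ell}\ehr_{\alcov}(q),
\]
which we can rewrite as $\ehr_{\alcov}(q)=(-1)^{\ell}\ehr_{\baralcov}(-q)$. Combining this with the previous display yields exactly the asserted identity
\[
\ehr_{\baralcov}(q-h)=(-1)^{\ell}\ehr_{\baralcov}(-q).
\]

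There is essentially no obstacle here; the result is a direct composition of two facts already on the page. The only points requiring minor care are the identification of $\alcov$ with the relative interior $\baralcov^{\circ}$ (which is immediate from the definitions in \S\ref{subsec:rs}) and the remark that reciprocity gives a polynomial identity valid for all $q\in\bZ$, so no separate argument for small or negative $q$ is needed.
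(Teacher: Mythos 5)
Your proof is correct and is exactly the argument the paper intends: the text introduces the corollary with the phrase ``combining Corollary \ref{cor:fullremove} and the reciprocity of Ehrhart quasi-polynomials (\ref{eq:reciproc})'', which is precisely your chain $\ehr_{\baralcov}(q-h)=\ehr_{\alcov}(q)=(-1)^{\ell}\ehr_{\baralcov}(-q)$. Your added remarks on identifying $\alcov$ with $\baralcov^{\circ}$ and on extending from $q>0$ to all $q\in\bZ$ via the quasi-polynomial property are appropriate and consistent with the paper.
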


\subsection{Characteristic quasi-polynomial}

\label{subsec:charquasi-poly}

In this section, we recall the relation between the 
Ehrhart quasi-polynomial of $\baralcov$ and the characteristic 
quasi-polynomial of the Weyl arrangement $\A_\Phi^{}$, following 
\cite{bl-sa, ath-adv, ath-gen, ktt-quasi} (which will be refined later). 
Recall the definition (\ref{eq:defparallelp}) 
of the fundamental parallelepiped 
$\parallelp=\{x\in V\mid 0<(\alpha_i, x)\leq 1, i=1, \dots, \ell\}$. 
Let $q>0$. 
Let us consider the projection 
\begin{equation}
\pi:\cwtl(\Phi)\longrightarrow \cwtl(\Phi)/q\cwtl(\Phi). 
\end{equation}
The restriction of $\pi$ to $q\parallelp$ 
induces a bijection 
\begin{equation}
\label{eq:bij}
\pi|_{q\parallelp \cap\cwtl(\Phi)}:
q\parallelp\cap\cwtl(\Phi)
\stackrel{\simeq}{\longrightarrow}
\cwtl(\Phi)/q\cwtl(\Phi). 
\end{equation}
To compute the characteristic quasi-polynomial, let us define 
$X_q$ by 
\begin{equation}
X_q
=
\cwtl(\Phi)
\smallsetminus
\bigcup_{\substack{\alpha\in\Phi^+ \\k\in\bZ}}H_{\alpha, kq}. 
\end{equation}
Then the projection $\pi$ induces a bijection between 
$q\parallelp\cap X_q$ and $M(\A_\Phi^{}; q)$. 

The set $q\parallelp\cap X_q$ is a disjoint union of 
$q\alcovxi\cap\cwtl(\Phi)$, ($\xi\in\Xi$). 
Therefore, by using the reciprocity (\ref{eq:reciproc}), we have 
\[
\begin{split}
|\parallelp\cap X_q|
&=
\frac{|W|}{f}\ehr_{\alcov}(q)\\
&=
\frac{|W|}{f}\cdot (-1)^\ell \ehr_{\baralcov}(-q). 
\end{split}
\]
(The case $\Phi=B_2$, $q=6$ is described in 
Figure \ref{fig:b2coxeter}. See Example \ref{ex:b2} for 
the notation.) 
Thus we have the following. 

\begin{proposition}
\label{prop:charquasicoxeter}
(\cite{ktt-quasi}) 
The characteristic quasi-polynomial of $\A_\Phi^{}$ is 
\[
\chi_{\quasi}(\A_\Phi^{}, q)=
\frac{|W|}{f}\cdot (-1)^\ell \ehr_{\baralcov}(-q). 
\]
\begin{figure}[htbp]
\centering
\begin{picture}(100,200)(0,0)
\thicklines

\thicklines

\color{red}

\multiput(20,40)(15,15){6}{\circle*{4}}
\multiput(20,55)(15,15){6}{\circle*{4}}
\multiput(20,70)(15,15){6}{\circle*{4}}
\multiput(20,85)(15,15){6}{\circle*{4}}
\multiput(20,100)(15,15){6}{\circle*{4}}
\multiput(20,115)(15,15){6}{\circle*{4}}

\thinlines

\normalcolor

\put(5,0){\line(0,1){150}}
\put(-10,155){\footnotesize $H_{\alpha_1,0}$}
\put(95,50){\line(0,1){150}}
\put(90,42){\footnotesize $H_{\alpha_1,q}$}

\put(-5,0){\line(1,1){130}}
\put(122,132){\footnotesize $H_{\alpha_2,0}$}

\put(-25,70){\line(1,1){130}}
\put(-32,62){\footnotesize $H_{\alpha_2,q}$}

\put(-5,110){\line(1,-1){65}}
\put(60,32){\footnotesize $H_{\widetilde{\alpha},q}$}

\put(40,155){\line(1,-1){65}}
\put(107,80){\footnotesize $H_{\widetilde{\alpha},2q}$}

\put(-20,100){\line(1,0){140}}
\put(122,97){\footnotesize $H_{\alpha_1+\alpha_2,q}$}

\multiput(5,10)(0,15){7}{\circle{3}}
\multiput(5,10)(15,15){7}{\circle{3}}

\thicklines

\color{blue}

\put(5,10){\line(0,1){15}}
\qbezier(5,10)(8.75,13.75)(12.5,17.5)
\qbezier(5,25)(8.75,21.25)(12.5,17.5)
\put(-7,15){\footnotesize $\alcov$}

\put(20,40){\line(1,1){15}}
\put(20,40){\line(0,1){30}}
\put(20,70){\line(1,-1){15}}
\put(21,52){\tiny $2\baralcov$}

\put(50,70){\line(1,1){15}}
\put(50,70){\line(-1,1){15}}
\put(35,85){\line(1,0){30}}
\put(45,77){\tiny $2\baralcov$}

\put(50,130){\line(1,-1){15}}
\put(50,130){\line(-1,-1){15}}
\put(35,115){\line(1,0){30}}
\put(43,117){\tiny $2\baralcov$}

\put(80,130){\line(0,1){30}}
\put(65,145){\line(1,1){15}}
\put(65,145){\line(1,-1){15}}
\put(67,143){\tiny $2\baralcov$}

\end{picture}
      \caption{$\chi_{\quasi}(\A_\Phi^{}, q)=4\ehr_{\baralcov}(q-4)$, 
($\Phi=B_2$, $q=6$).}
\label{fig:b2coxeter}
\end{figure}
\end{proposition}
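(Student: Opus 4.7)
The plan is to combine the chain of bijections assembled in the paragraphs preceding the proposition with Ehrhart reciprocity. First, I would observe that the bijection (\ref{eq:bij}) restricts to a bijection $q\parallelp \cap X_q \xrightarrow{\sim} M(\A_\Phi^{}; q)$, because the hyperplanes $H_{\alpha, kq}$ excluded in the definition of $X_q$ are exactly the pre-images under $\pi$ of the mod-$q$ hyperplanes defining $M(\A_\Phi^{}; q)$. Hence $|M(\A_\Phi^{}; q)| = |q\parallelp \cap X_q|$, and for $q \gg 0$ this equals $\chi_{\quasi}(\A_\Phi^{}, q)$ by Theorem \ref{thm:ktt}.

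Second, I would decompose $q\parallelp \cap X_q$ using the alcove dissection. Dilating (\ref{eq:alcoves}) by $q$ turns each $H_{\alpha, k}$ into $H_{\alpha, kq}$, so that
\[
q\parallelp \setminus \bigcup_{\alpha \in \Phi^+, k \in \bZ} H_{\alpha, kq} \;=\; \bigsqcup_{\xi \in \Xi} q\alcovxi,
\]
and intersecting with $\cwtl(\Phi)$ gives $q\parallelp \cap X_q = \bigsqcup_{\xi \in \Xi} (q\alcovxi \cap \cwtl(\Phi))$. Since the affine Weyl group $W_{\aff} = W \ltimes \crtl(\Phi)$ acts simply transitively on alcoves and preserves $\cwtl(\Phi)$, each $\alcovxi = w(\alcov)$ for some $w \in W_{\aff}$; writing $w(x) = A x + v$ with $A \in W$ and $v \in \crtl(\Phi)$, one checks that $q\,\alcovxi = A(q\alcov) + qv$, a translate by the lattice vector $qv \in \cwtl(\Phi)$ of an orthogonal image of $q\alcov$. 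Hence $|q\alcovxi \cap \cwtl(\Phi)| = \ehr_{\alcov}(q)$, independent of $\xi$, and summing over $|\Xi| = |W|/f$ (Proposition \ref{prop:rootsys} (ii)) yields
\[
|q\parallelp \cap X_q| \;=\; \frac{|W|}{f}\,\ehr_{\alcov}(q).
\]

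Third, I would finish via Ehrhart reciprocity (\ref{eq:reciproc}) applied to the $\ell$-simplex $\baralcov$ (whose relative interior is $\alcov$): $\ehr_{\alcov}(q) = (-1)^\ell \ehr_{\baralcov}(-q)$. Substituting gives the desired formula for $q \gg 0$, and since both sides are quasi-polynomials in $q$ they must agree identically.

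The only nontrivial point is the lattice-invariance claim in the second step, namely that dilating an affinely translated alcove still yields a coweight-lattice translate of $q\alcov$. This is the geometric content of the proof; everything else is bookkeeping of the definitions and Ehrhart reciprocity.
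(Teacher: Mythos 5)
Your proposal is correct and follows essentially the same route as the paper: identify $M(\A_\Phi;q)$ with $q\parallelp\cap X_q$ via the bijection (\ref{eq:bij}), decompose into the $\frac{|W|}{f}$ dilated alcoves $q\alcovxi\cap\cwtl(\Phi)$, each contributing $\ehr_{\alcov}(q)$ by Weyl-group and lattice-translation invariance, and finish with Ehrhart reciprocity (\ref{eq:reciproc}). The only difference is that you spell out the invariance argument ($q\,w(\alcov)=A(q\alcov)+qv$ with $qv\in\crtl(\Phi)\subset\cwtl(\Phi)$) that the paper leaves implicit.
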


We also have the duality of characteristic quasi-polynomial of the 
Weyl arrangement. 
\begin{corollary}
$\chi_{\quasi}(\A_\Phi^{}, q)=
(-1)^\ell\cdot\chi_{\quasi}(\A_\Phi^{}, h-q)$. 
\end{corollary}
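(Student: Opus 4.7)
The plan is to combine the two ingredients stated just above the corollary: Proposition \ref{prop:charquasicoxeter}, which identifies the characteristic quasi-polynomial with $(-1)^\ell \frac{|W|}{f}\ehr_{\baralcov}(-q)$, and Corollary \ref{cor:duality}, which gives the duality $\ehr_{\baralcov}(q-h)=(-1)^\ell \ehr_{\baralcov}(-q)$ for the Ehrhart quasi-polynomial of the closed fundamental alcove. The idea is that both sides of the desired identity reduce, via Proposition \ref{prop:charquasicoxeter}, to expressions involving $\ehr_{\baralcov}$, and the reciprocity relation in Corollary \ref{cor:duality} interchanges them.

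Concretely, I would first evaluate the right-hand side. Applying Proposition \ref{prop:charquasicoxeter} with $h-q$ in place of $q$ gives
\[
\chi_{\quasi}(\A_\Phi^{}, h-q) = \frac{|W|}{f}\cdot (-1)^\ell\, \ehr_{\baralcov}(q-h).
\]
Next, I would invoke Corollary \ref{cor:duality} to rewrite $\ehr_{\baralcov}(q-h)=(-1)^\ell\ehr_{\baralcov}(-q)$, obtaining
\[
\chi_{\quasi}(\A_\Phi^{}, h-q) = \frac{|W|}{f}\,\ehr_{\baralcov}(-q).
\]
Multiplying through by $(-1)^\ell$ and comparing with Proposition \ref{prop:charquasicoxeter} applied at $q$ yields
\[
(-1)^\ell\,\chi_{\quasi}(\A_\Phi^{}, h-q) = \frac{|W|}{f}\cdot (-1)^\ell\,\ehr_{\baralcov}(-q) = \chi_{\quasi}(\A_\Phi^{}, q).
\]

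There is really no obstacle here: the substantive work is done by Corollary \ref{cor:duality} (which itself packaged Ehrhart reciprocity together with $\sum c_i = h$). One small point to be careful about is that both statements are equalities of quasi-polynomials in $q\in\bZ$, so the manipulation is valid for all integers $q$, not merely for $q\gg 0$; since Proposition \ref{prop:charquasicoxeter} is presented as an identity of quasi-polynomials and Corollary \ref{cor:duality} is stated for $q\in\bZ$, no additional justification is needed.
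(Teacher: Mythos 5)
Your argument is correct and is essentially identical to the paper's own proof: both combine Proposition \ref{prop:charquasicoxeter} with the duality $\ehr_{\baralcov}(q-h)=(-1)^\ell\ehr_{\baralcov}(-q)$ of Corollary \ref{cor:duality}, differing only in which side of the identity you start from. Your closing remark about the identity holding for all $q\in\bZ$ (as an equality of quasi-polynomials) is also consistent with how the paper treats it.
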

\begin{proof}
Suppose $q\gg 0$. 
Using Corollary \ref{cor:duality} and 
Proposition \ref{prop:charquasicoxeter}, we have 
\[
\begin{split}
\chi_{\quasi}(\A_\Phi^{}, q)
&=
\frac{|W|}{f}\cdot (-1)^\ell \ehr_{\baralcov}(-q)\\
&=
\frac{|W|}{f}\cdot \ehr_{\baralcov}(q-h)\\
&=
(-1)^\ell\chi_{\quasi}(\A_\Phi^{}; h-q). 
\end{split}
\]
\end{proof}

\section{Generalized Eulerian polynomial}

\label{sec:geul}

\subsection{Definition and basic property}
\label{subsec:defbasics}

Using the linear relation (\ref{eq:linrel}) in \S \ref{subsec:rs}, 
we define the function 
$\asc, \dsc:W\longrightarrow\bZ$. 

\begin{definition}
\label{def:asc}
Let $w\in W$. Then $\asc(w)$ and $\dsc(w)\in\bZ$ are defined by 
\[
\begin{split}
\asc(w)&=\sum_{\substack{0\leq i\leq \ell\\ w(\alpha_i)>0}}c_i,\\
\dsc(w)&=\sum_{\substack{0\leq i\leq \ell\\ w(\alpha_i)<0}}c_i.
\end{split}
\]
\begin{remark}
Note that $\dsc(w)$ in this paper is equal to 
$\cdes(w)$ in \cite{lp-alc2}. 
\end{remark}
\end{definition}
Let $w_0\in W$ be the longest element. Since $w_0$ exchanges 
positive and negative roots, we have 
\begin{equation}
\label{eq:ascdsc}
\begin{split}
\asc(w_0w)=&\dsc(w)=h-\asc(w),\\
\dsc(w_0w)=&\asc(w)=h-\dsc(w). 
\end{split}
\end{equation}

\begin{lemma}
\label{lem:transl}
\begin{itemize}
\item[(1)] 
Let $w\in W$. Suppose that $w$ induces a permutation on 
$\{\alpha_0, \alpha_1, \dots, \alpha_\ell\}$. If $w(\alpha_i)=\alpha_{p_i}$, 
then $c_i=c_{p_i}$. 
\item[(2)] 
Let $w_1, w_2\in W$. If there exists $\gamma\in V$ (actually 
$\gamma\in\crtl(\Phi)$) such that $w_2\alcov=w_1\alcov +\gamma$, 
then $\asc(w_1)=\asc(w_2)$. 
\end{itemize}
\end{lemma}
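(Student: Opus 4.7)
The plan is to handle (1) first via the uniqueness of the linear relation among $\alpha_0, \alpha_1, \ldots, \alpha_\ell$, and then reduce (2) to (1) by recognizing that the hypothesis forces $u := w_1^{-1} w_2$ to induce a permutation on the extended simple roots $\{\alpha_0, \alpha_1, \ldots, \alpha_\ell\}$.

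For (1), observe that $\alpha_1, \ldots, \alpha_\ell$ forms a basis of $V$, so the space of linear relations among $\alpha_0, \ldots, \alpha_\ell$ is one-dimensional and spanned by $(c_0, c_1, \ldots, c_\ell)$ via (\ref{eq:linrel}). Applying the linear isometry $w$ to $\sum_i c_i \alpha_i = 0$ yields $\sum_i c_i \alpha_{p_i} = 0$, which after reindexing $j = p_i$ reads $\sum_j c_{p^{-1}(j)} \alpha_j = 0$. By uniqueness, $c_{p^{-1}(j)} = \lambda c_j$ for a common scalar $\lambda$; since $p$ is a permutation, $\sum_j c_{p^{-1}(j)} = \sum_j c_j = h > 0$, forcing $\lambda = 1$, hence $c_{p_i} = c_i$.

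For (2), set $u := w_1^{-1}w_2 \in W$ and $\gamma' := w_1^{-1}\gamma$, so that $u\alcov = \alcov + \gamma'$. Then the affine map $\tau := t_{-\gamma'} \circ u$ fixes $\alcov$ setwise, hence permutes its $\ell+1$ vertices $0, \varpi_1^\lor/c_1, \ldots, \varpi_\ell^\lor/c_\ell$ and induces a permutation $\pi$ of $\{0, 1, \ldots, \ell\}$ sending each facet $F_i$ to $F_{\pi(i)}$. Comparing the supporting hyperplane $\tau(H_{\alpha_i,0}) = H_{u\alpha_i,\,-(u\alpha_i,\gamma')}$ (and analogously for $F_0 \subset H_{\widetilde{\alpha},1}$) with the hyperplane defining $F_{\pi(i)}$, while tracking that $\tau$ preserves the side of $\alcov$ at each facet, shows in the reduced root system $\Phi$ that $u\alpha_i = \alpha_{\pi(i)}$ for every $i \in \{0, 1, \ldots, \ell\}$, with the convention $\alpha_0 = -\widetilde{\alpha}$. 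Hence $u$ genuinely permutes $\{\alpha_0, \ldots, \alpha_\ell\}$ via $\pi$, and part (1) gives $c_{\pi^{-1}(j)} = c_j$.

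Finally, substituting $w_2\alpha_i = w_1 u\alpha_i = w_1\alpha_{\pi(i)}$ and reindexing $j = \pi(i)$ yields
\begin{equation*}
\asc(w_2) = \sum_{i\,:\, w_2\alpha_i > 0} c_i = \sum_{j\,:\, w_1\alpha_j > 0} c_{\pi^{-1}(j)} = \sum_{j\,:\, w_1\alpha_j > 0} c_j = \asc(w_1).
\end{equation*}
The principal obstacle is the sign-tracking needed to upgrade the immediate hyperplane-matching conclusion "$u\alpha_i$ is a nonzero scalar multiple of $\alpha_{\pi(i)}$" to the sharp form "$u\alpha_i = +\alpha_{\pi(i)}$"; this relies on $\tau$ preserving the interior half-space of $\alcov$ at each facet, together with the reducedness of $\Phi$ (forcing the scalar to be $\pm 1$ and then pinning the sign). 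Once this is secured, part (1) and the reindexing above complete the argument.
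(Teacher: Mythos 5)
Your proof is correct and follows essentially the same route as the paper: part (1) via the one-dimensionality of the space of linear relations among $\alpha_0,\dots,\alpha_\ell$, and part (2) by matching the supporting hyperplanes (equivalently, inward normals) of the two descriptions of the alcove and reducing to part (1). You simply make explicit two points the paper glosses over — the reduction to $u=w_1^{-1}w_2$ and the sign-tracking that upgrades proportionality of normals to equality of roots — which is a faithful elaboration rather than a different argument.
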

\begin{proof}
(1) Applying $w$ to the linear relation (\ref{eq:linrel}), 
we have 
\begin{equation}
\label{eq:comparerel}
\sum_{i=0}^\ell c_iw(\alpha_i)=
\sum_{i=0}^\ell c_i \alpha_{p_i}=0. 
\end{equation}
Note that any $\ell$ distinct members of 
$\{\alpha_0, \alpha_1, \dots, \alpha_\ell\}$ 
are linearly independent. Therefore, the space 
of linear relations has dimension $1$. 
Both (\ref{eq:linrel}) and (\ref{eq:comparerel}) are linear 
relations with positive coefficients 
normalized in such a way that the minimal coefficient is equal to $1$ 
($c_0=1$). 
Hence (\ref{eq:linrel}) and (\ref{eq:comparerel}) are identical, which 
yields $c_i=c_{p_i}$. 

(2) Suppose $w_2\alcov=w_1\alcov +\gamma$. Each side is 
\[
\begin{split}
w_1\alcov +\gamma
=&
\left\{
x\in V
\left|
\begin{array}{ll}
(w_1\alpha_0, x)>(w_1\alpha_0, \gamma)-1,& \\
(w_1\alpha_i, x)>(w_1\alpha_i, \gamma),& i=1, \dots, \ell
\end{array}
\right.
\right\}, 
\\
w_2\alcov 
=&
\left\{
x\in V
\left|
\begin{array}{ll}
(w_2\alpha_0, x)>-1,& \\
(w_2\alpha_i, x)>0,& i=1, \dots, \ell
\end{array}
\right.
\right\}. 
\end{split}
\]
Since the supporting hyperplanes should coincide, 
we have 
\[
\{w_1\alpha_0, w_1\alpha_1, \dots, w_1\alpha_\ell\}=
\{w_2\alpha_0, w_2\alpha_1, \dots, w_2\alpha_\ell\}. 
\]
Thus (a modified version of) (1) enables us 
to deduce $\asc(w_1)=\asc(w_2)$. 
\end{proof}

\begin{definition}
\label{def:geneul}
The generalized Eulerian polynomial $\geneul_\Phi(t)$ is defined by 
\[
\geneul_\Phi(t)=\frac{1}{f}\sum_{w\in W}t^{\asc(w)}. 
\]
\end{definition}
The following proposition gives some basic properties of 
$\geneul_\Phi(t)$. We omit the proof, since they are immediate 
consequences of Theorem \ref{thm:lp} by Lam and Postnikov. 
(Direct proofs are also easy. In particular, the duality (2) is 
immediately deduced from (\ref{eq:ascdsc}).) 
\begin{proposition}
\label{prop:properties}
\begin{itemize}
\item[(1)] $\deg \geneul_\Phi(t)=h-1$. 
\item[(2)] (Duality) $t^h\cdot\geneul_\Phi(\frac{1}{t})=\geneul_\Phi(t)$. 
\item[(3)] $\geneul_\Phi(t)\in\bZ[t]$. 
\item[(4)] $\geneul_{A_\ell}(t)=\eul_\ell(t)$. 
\end{itemize}
\end{proposition}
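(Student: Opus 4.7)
My plan is to prove all four parts directly from the definitions and the alcove geometry already established in \S\ref{sec:background} and \S\ref{subsec:defbasics}, without relying on the Lam--Postnikov formulas.

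Part (1) is elementary from the linear relation (\ref{eq:linrel}): applying any $w\in W$ yields $\sum_{i=0}^{\ell}c_i\,w(\alpha_i)=0$, a positive-coefficient dependence among $w(\alpha_0),\dots,w(\alpha_\ell)$. Since a nontrivial positive linear combination of positive roots cannot vanish, at least one $w(\alpha_j)$ must be negative, giving $\asc(w)\leq h-c_j\leq h-1$ (using Proposition~\ref{prop:rootsys}(i) and $c_0=1$). The identity attains this bound: $\asc(e)=c_1+\cdots+c_\ell=h-1$. Part (2) is immediate from (\ref{eq:ascdsc}): the map $w\mapsto w_0w$ is a bijection on $W$ with $\asc(w_0w)=h-\asc(w)$, so
\[
f\geneul_\Phi(t)=\sum_{w\in W}t^{\asc(w)}=\sum_{w\in W}t^{h-\asc(w)}=t^h\cdot f\geneul_\Phi(1/t).
\]

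For Part (3), the plan is to realize $\geneul_\Phi(t)$ as an honest generating function indexed by $\Xi$. Define $\varphi:W\to\Xi$ by sending $w$ to the unique $\xi$ such that $w\alcov$ is a $\cwtl(\Phi)$-translate of $\alcovxi$; this is well-defined because $\parallelp$ is a fundamental domain for $\cwtl(\Phi)$ and the open alcoves inside $\parallelp$ are exactly the $\alcovxi$ by (\ref{eq:alcoves}). Translation by $\gamma\in\cwtl(\Phi)$ permutes the $W$-alcoves (because $W\ltimes\cwtl(\Phi)$ acts on the set of alcoves) and preserves $\varphi$; by simple transitivity of $W_{\aff}=W\ltimes\crtl(\Phi)$, two such translations induce the same permutation of $\{w\alcov\}_{w\in W}$ iff they differ by an element of $\crtl(\Phi)$. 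Hence $\Omega:=\cwtl(\Phi)/\crtl(\Phi)$, of order $f$, acts freely on each fiber $\varphi^{-1}(\xi)$, and transitively since any two members of a fiber differ by some $\gamma\in\cwtl(\Phi)$. Lemma~\ref{lem:transl}(2), whose proof actually works for any $\gamma\in V$ (only the equality of supporting hyperplanes is used), ensures $\asc$ is constant on each fiber, so
\[
\geneul_\Phi(t)=\sum_{\xi\in\Xi}t^{\asc(\alcovxi)}\in\bZ[t].
\]

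For Part (4), identify $W=\frakS_{\ell+1}$ acting on $\{1,\dots,\ell+1\}$, with $\alpha_i=e_i-e_{i+1}$ for $1\leq i\leq\ell$, $\alpha_0=e_{\ell+1}-e_1$, and all $c_i=1$. Then $\asc(w)$ equals the cyclic ascent count $\#\{i\bmod\ell+1:w(i)<w(i+1)\}$, which is invariant under the cyclic shift action of $\Omega\cong\bZ/(\ell+1)$. Each orbit has a unique representative with $w(1)=1$; writing $(w(2),\dots,w(\ell+1))$ as a permutation $\sigma\in\frakS_\ell$, one verifies $\asc(w)=1+a(\sigma)$ (the cyclic transition $1=w(1)<w(2)$ is always an ascent, while $w(\ell+1)>1=w(1)$ is never one), whence $\geneul_{A_\ell}(t)=\sum_{\sigma\in\frakS_\ell}t^{1+a(\sigma)}=\eul_\ell(t)$. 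The principal obstacle is the fiber-size verification in Part (3); once the interplay between $W_{\aff}$, the $\cwtl(\Phi)$-action, and $\Omega$ is sorted out, the remaining three parts reduce to direct symmetry computations.
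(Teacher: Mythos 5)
Your proofs are correct, but they take a genuinely different route from the paper, which gives no direct argument at all: it simply observes that all four statements are immediate consequences of the Lam--Postnikov product formula $\geneul_\Phi(t)=[c_1]_t\cdots[c_\ell]_t\cdot\eul_\ell(t)$ (Theorem \ref{thm:lp}), noting only that (2) also follows from (\ref{eq:ascdsc}). Your argument is self-contained: (1) from the positive linear relation (\ref{eq:linrel}) forcing at least one $w(\alpha_i)$ to be negative, (2) from the involution $w\mapsto w_0w$ exactly as the paper suggests, and (4) by a direct cyclic-descent computation in $\frakS_{\ell+1}$. Your (3) is in substance a proof of the paper's later Theorem \ref{thm:eulerianparallelp}, identifying $\geneul_\Phi(t)$ with $\sum_{\xi\in\Xi}t^{\asc(\alcovxi)}$ via the fibration $\varphi:W\to\Xi$ with fibers that are free transitive orbits of $\Omega=\cwtl(\Phi)/\crtl(\Phi)$; in fact your version is slightly more careful than the paper's, since the translations identifying $w\alcov$ with an alcove of $\parallelp$ must be allowed to range over $\cwtl(\Phi)$ rather than $\crtl(\Phi)$ (already for $A_1$ the nontrivial element of the fiber requires a coweight that is not a coroot), and you correctly note that Lemma \ref{lem:transl}(2) applies for arbitrary $\gamma\in V$. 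What the paper's approach buys is brevity and the much stronger structural information of the product formula; what yours buys is independence from that external result and an explicit combinatorial explanation of why $f$ divides each coefficient of $\sum_w t^{\asc(w)}$.
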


The polynomial $\geneul_\Phi(t)$ was introduced by 
Lam and Postnikov in \cite{lp-alc2}. They proved 
that $\geneul_\Phi(t)$ can be expressed in terms of cyclotomic 
polynomials and the classical Eulerian polynomial. 
\begin{theorem}
\label{thm:lp}
(\cite[Theorem 10.1]{lp-alc2}) Let $\Phi$ be a root system of rank $\ell$. 
Then 
\begin{equation}
\label{eq:lp}
\geneul_\Phi(t)=[c_1]_t\cdot [c_2]_t\cdots [c_\ell]_t\cdot\eul_\ell(t), 
\end{equation}
where $[c]_t=\frac{t^c-1}{t-1}$. 
\end{theorem}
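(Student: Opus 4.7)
The plan is to establish the factorization via a bijective parametrization of alcoves compatible with the ascent statistic. By Lemma~\ref{lem:transl}(2), the value $\asc(w)$ depends only on the alcove $w\alcov$ modulo $\crtl(\Phi)$-translations, and since the affine Weyl group $W_{\aff} = W \ltimes \crtl(\Phi)$ acts simply transitively on alcoves, the assignment $w \mapsto w\alcov$ is a bijection between $W$ and the set of alcoves contained in any fixed fundamental domain $D$ of $\crtl(\Phi)$. This reduces the problem to computing the generating function $\sum_{\widetilde{A} \subset D} t^{\asc(\widetilde{A})}$ for a cleverly chosen $D$.

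I would choose $D$ so as to exhibit a natural product structure. Since $|W|/f = \ell!\cdot c_1\cdots c_\ell$ by Proposition~\ref{prop:rootsys}(ii), a fundamental domain of $\crtl(\Phi)$ can be taken as an $f$-fold assembly of translates of $\parallelp$, containing exactly $|W|$ alcoves. The target is a parametrization of these alcoves by tuples $(g, k_1,\ldots,k_\ell,\sigma)$ with $g \in \cwtl(\Phi)/\crtl(\Phi)$ labelling the translate, each $k_i \in \{0,1,\ldots,c_i-1\}$ recording an offset in the $\varpi_i^\lor$-direction, and $\sigma \in \frakS_\ell$ recording the finer alcove position inside a basic simplex. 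The crucial claim to prove is that under this parametrization,
\[
\asc(\widetilde{A}) = k_1 + k_2 + \cdots + k_\ell + \bigl(1 + a(\sigma)\bigr),
\]
with no contribution from the coset label $g$. Summing each coordinate independently then gives $\sum_{w \in W} t^{\asc(w)} = f \cdot [c_1]_t \cdots [c_\ell]_t \cdot \eul_\ell(t)$, and dividing by $f$ yields the theorem. A sanity check in type $B_2$ (where $c_1=1$, $c_2=2$, $f=2$, $|W|=8$) gives $[1]_t[2]_t\eul_2(t) = (1+t)(t+t^2) = t + 2t^2 + t^3$, matching what one computes directly from Definition~\ref{def:geneul}.

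The hard part will be constructing this parametrization uniformly across all irreducible root systems and verifying the ascent decomposition case-free. The essential geometric fact needed is that translating an alcove successively by $\varpi_i^\lor$ cycles through $c_i$ distinct alcoves (indexed by the offset $k_i$) whose $\asc$-values form an arithmetic progression of common difference $1$, and that the alcoves within a basic ``type-$A_\ell$'' simplex region carry $\asc$-values matching the classical permutation statistic $1 + a(\sigma)$ after a canonical bijection with $\frakS_\ell$. This delicate analysis, which ties the extended Dynkin marks $c_i$ to the wall structure of the affine alcoves through the linear relation~(\ref{eq:linrel}) and the partial rearrangement argument of Lemma~\ref{lem:transl}(1), is the technical core of the Lam--Postnikov argument and the principal obstacle to a short self-contained proof.
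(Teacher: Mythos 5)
First, a point of comparison: the paper does not prove this statement at all --- Theorem~\ref{thm:lp} is imported verbatim from Lam and Postnikov \cite[Theorem~10.1]{lp-alc2}, and the surrounding text explicitly defers to their work. So there is no in-paper argument to measure your proposal against; the only question is whether your sketch would stand on its own as a proof. It would not. You correctly reduce the problem to computing $\sum_{w\in W}t^{\asc(w)}$ over the alcoves in a fundamental domain of $\crtl(\Phi)$ (using Lemma~\ref{lem:transl}(2) and simple transitivity of $W_{\aff}$), and your consistency checks are sound: the degree count $\ell+\sum_i(c_i-1)=h-1$, the cardinality check $f\cdot\ell!\,c_1\cdots c_\ell=|W|$ via Proposition~\ref{prop:rootsys}(ii), and the $B_2$ computation all match. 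But the entire content of the theorem is concentrated in the two claims you defer: (a) that the alcoves in $\parallelp$ admit a product parametrization by tuples $(k_1,\dots,k_\ell,\sigma)$ with $k_i\in\{0,\dots,c_i-1\}$ and $\sigma\in\frakS_\ell$, and (b) that $\asc$ is \emph{additive} across this parametrization, $\asc=\sum_i k_i+1+a(\sigma)$. Neither is constructed nor verified; you state yourself that this is ``the technical core of the Lam--Postnikov argument and the principal obstacle.'' A proof outline whose hardest step is acknowledged but not carried out is a gap, not a proof.

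Two further cautions about the deferred step. The translation-by-$\varpi_i^\lor$ heuristic is delicate: $\varpi_i^\lor$ is generally not in $\crtl(\Phi)$, so translation by it is not an affine Weyl group element, and the claim that successive translates produce alcoves whose $\asc$-values increase by exactly $1$ needs an argument tying the crossing of walls $H_{\alpha,k}$ to the coefficients $c_i$ via (\ref{eq:linrel}); nothing in Lemma~\ref{lem:transl} gives this for free. Likewise, the identification of the ``fine position'' statistic with $1+a(\sigma)$ for a classical permutation requires exhibiting a type-$A_\ell$ subsystem structure inside the alcove decomposition of $\parallelp$ that is uniform over all irreducible $\Phi$, which is far from obvious (the marks $c_i$ are highly non-uniform across types, cf.\ Table~\ref{fig:table}). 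If you want a complete argument you must either supply these constructions or, as the paper does, cite \cite{lp-alc2}.
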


Let 
$A'\subset V\smallsetminus\bigcup_{\alpha\in\Phi^+, k\in\bZ} H_{\alpha, k}$ 
be an arbitrary alcove. We can write $A'=w(\alcov)+\gamma$ for 
some $w\in W$ and $\gamma\in\crtl(\Phi)$. Then let us define 
\begin{equation}
\label{eq:ascgen}
\asc(A'):=\asc(w), 
\end{equation}
which is indeed well-defined because of the translational invariance 
(Lemma \ref{lem:transl} (2)). Thus we can extend $\asc$ as a 
function on the set of all alcoves. Using this extension, 
we have another expression for 
$\geneul_\Phi(t)$. 
\begin{theorem}
\label{thm:eulerianparallelp}
\begin{equation}
\label{eq:eulerianpara}
\geneul_{\Phi}(t)=
\sum_{A'\subset\parallelp}t^{\asc(A')}
=
\sum_{\xi\in\Xi}t^{\asc(\alcovxi)}. 
\end{equation}
\end{theorem}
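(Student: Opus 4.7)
The plan is to exhibit a surjection $\Psi:W\twoheadrightarrow\Xi$ all of whose fibers have size $f$, with the additional property that $\asc$ is constant on each fiber with common value $\asc(\alcovxi)$ on the fiber over $\alcovxi$. Granted this, the desired identity is immediate:
\[
\sum_{w\in W} t^{\asc(w)}=\sum_{\xi\in\Xi}\sum_{w\in\Psi^{-1}(\alcovxi)}t^{\asc(w)}=f\sum_{\xi\in\Xi}t^{\asc(\alcovxi)},
\]
and dividing by $f$ gives $\geneul_{\Phi}(t)=\sum_{\xi\in\Xi}t^{\asc(\alcovxi)}$.

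To define $\Psi$, I would first note that $\parallelp$ is a fundamental domain for the translation action of $\cwtl(\Phi)$ on $V$, and its boundary hyperplanes $\{(\alpha_i,x)=0\}$, $\{(\alpha_i,x)=1\}$ all belong to the affine Weyl arrangement, so no alcove straddles the boundary of any $\cwtl(\Phi)$-translate of $\parallelp$. Hence each alcove lies in a unique such translate. For $w\in W$, let $\mu_w\in\cwtl(\Phi)$ be the unique coweight with $w(\alcov)+\mu_w\subset\parallelp$ and set $\Psi(w):=w(\alcov)+\mu_w\in\Xi$. Surjectivity is immediate because each $\alcovxi$ admits a decomposition $\alcovxi=w(\alcov)+\gamma$ with $w\in W$, $\gamma\in\crtl(\Phi)\subset\cwtl(\Phi)$ (using that $\baralcov$ is a fundamental domain for $W_{\aff}$), forcing $\mu_w=\gamma$ and $\Psi(w)=\alcovxi$.

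To pin down the fiber size, I would refine $\Psi$ to a map
\[
\widetilde{\Psi}:W\longrightarrow \Xi\times\bigl(\cwtl(\Phi)/\crtl(\Phi)\bigr),\qquad w\longmapsto\bigl(\Psi(w),\,[\mu_w]\bigr),
\]
and show it is a bijection. Injectivity follows from the uniqueness of the $W_{\aff}$-decomposition: if $\widetilde{\Psi}(w_1)=\widetilde{\Psi}(w_2)$, then $w_2(\alcov)=w_1(\alcov)+(\mu_{w_1}-\mu_{w_2})$ with $\mu_{w_1}-\mu_{w_2}\in\crtl(\Phi)$, forcing $w_1=w_2$. For surjectivity, given a pair $(\alcovxi,[\omega])$, apply the $W_{\aff}$-decomposition to the alcove $\alcovxi-\omega$ to obtain $\alcovxi-\omega=w(\alcov)+\gamma$ with $\gamma\in\crtl(\Phi)$, and verify $\widetilde{\Psi}(w)=(\alcovxi,[\omega])$. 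Since $|\cwtl(\Phi)/\crtl(\Phi)|=f$, this gives $|W|=f\cdot|\Xi|$ and identifies each $\Psi$-fiber with an $\Omega$-torsor.

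For the constancy of $\asc$ on fibers, I would invoke Lemma \ref{lem:transl}(2): if $\Psi(w_1)=\Psi(w_2)$, then $w_2(\alcov)=w_1(\alcov)+(\mu_{w_1}-\mu_{w_2})$. Although the lemma is phrased for $\gamma\in\crtl(\Phi)$, its proof only uses that the supporting hyperplanes of the two alcoves coincide, giving $\{w_1\alpha_j\}_{j=0}^\ell=\{w_2\alpha_j\}_{j=0}^\ell$; this is automatic for any $\gamma\in\cwtl(\Phi)$ making $w_1(\alcov)+\gamma$ an alcove of the arrangement. Hence $\asc(w_1)=\asc(w_2)$, which equals $\asc(\alcovxi)$ by (\ref{eq:ascgen}). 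The main obstacle I anticipate is keeping the bookkeeping straight between the two lattices $\crtl(\Phi)\subset\cwtl(\Phi)$ and the corresponding affine and extended affine Weyl groups; once the bijection $\widetilde{\Psi}$ is in hand, the rest follows cleanly.
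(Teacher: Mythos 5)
Your proof is correct and follows essentially the same route as the paper: the paper defines the same $f$-to-one map $W\to\{\alcovxi\mid\xi\in\Xi\}$ (citing Humphreys for surjectivity and the fiber size $f$) and relies on the translational invariance of $\asc$ from Lemma \ref{lem:transl}(2), exactly as you do. The only difference is that you supply a self-contained proof of the fiber-size claim via the bijection $\widetilde{\Psi}$ onto $\Xi\times(\cwtl(\Phi)/\crtl(\Phi))$, where the paper simply cites the reference.
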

\begin{proof}
For any $w\in W$, there exists a unique $\gamma\in\crtl(\Phi)$ 
such that $w(\alcov)+\gamma\subset\parallelp$. This induces a 
map $\varphi:W\longrightarrow\{\alcovxi\mid\xi\in\Xi\}$. 
The map is surjective and $\#\varphi^{-1}(\alcovxi)=f$ holds for 
any alcove $\alcovxi\subset\parallelp$ (see \cite[page 99]{hum}). 
The assertion follows from the definition of $\geneul_\Phi(t)$. 
\end{proof}

\subsection{Worpitzky partition}
\label{subsec:worpart}

From the definition 
$\parallelp=\sum_{i=1}^\ell (0,1]\varpi_i^\lor$, 
\begin{equation}
q\parallelp\cap\cwtl(\Phi)=
\{t_1\varpi_1^\lor+\dots+t_\ell\varpi_\ell^\lor\mid
t_i\in\bZ, 0< t_i\leq q\}. 
\end{equation}
Hence we have 
\begin{equation}
\label{eq:qell}
\ehr_{\parallelp}(q)=
\#(q\parallelp\cap\cwtl(\Phi))=
q^\ell. 
\end{equation}
The partition (\ref{eq:partition}) 
$\parallelp=\bigsqcup_{\xi\in\Xi}\semialcov$ 
in Proposition \ref{prop:partition} induces a partition of lattice 
points, 
\begin{equation}
\label{eq:latticeworp}
q\parallelp\cap\cwtl(\Phi)=\bigsqcup_{\xi\in\Xi}
q\alcovxi\cap\cwtl(\Phi), 
\end{equation}
which we shall call the \emph{Worpitzky partition}. 

\begin{theorem}
\label{thm:worpart}
Suppose $q\gg 0$ (indeed $q>h$ is sufficient). Then 
\begin{equation}
\label{eq:worpitzkyPhi}
q^\ell=(\geneul_{\Phi}(S)\ehr_{\baralcov})(q). 
\end{equation}
\end{theorem}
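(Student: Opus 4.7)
The plan is to count $\#(q\parallelp \cap \cwtl(\Phi))$ in two ways and equate the results. From (\ref{eq:qell}) this count equals $q^\ell$, while the Worpitzky partition (\ref{eq:latticeworp}) of Proposition \ref{prop:partition} splits it as $\sum_{\xi \in \Xi}\ehr_{\semialcov}(q)$. The proof therefore reduces to two tasks: (a) evaluating $\ehr_{\semialcov}(q)$ in closed form for each $\xi$, and (b) assembling the sum into the operator expression $(\geneul_{\Phi}(S)\ehr_{\baralcov})(q)$ via Theorem \ref{thm:eulerianparallelp}.

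For step (a), I would establish the key Ehrhart identity
\[
\ehr_{\semialcov}(q) = \ehr_{\baralcov}(q - \asc(\alcovxi)).
\]
Write $\alcovxi = w\alcov + \gamma$ with $w \in W$ and $\gamma \in \crtl(\Phi)$. Since $\cwtl(\Phi)$ is $W$-invariant and $\crtl(\Phi)$-translation invariant, the map $x \mapsto w^{-1}(x - q\gamma)$ is a bijection from $q\semialcov \cap \cwtl(\Phi)$ onto the lattice points of $q$ times the semi-closure of $\alcov$ taken in direction $w^{-1}\rho$ (using Proposition \ref{prop:semiclosure}). A facet $F_i$ of $\baralcov$ (indexed by $i \in \{0, 1, \ldots, \ell\}$, with outer normal a positive multiple of $-\alpha_i$) is then \emph{excluded} from this transported semi-closure precisely when its outer normal pairs negatively with $w^{-1}\rho$, i.e.\ $(w\alpha_i, \rho) > 0$; since $\rho$ is strictly dominant, this is equivalent to $w\alpha_i > 0$. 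Removing exactly this set of facets and invoking Corollary \ref{cor:partialremove} yields
\[
\ehr_{\semialcov}(q) = \ehr_{\baralcov}\Bigl(q - \sum_{i:\, w\alpha_i > 0} c_i\Bigr) = \ehr_{\baralcov}(q - \asc(w)) = \ehr_{\baralcov}(q - \asc(\alcovxi)),
\]
where the hypothesis $q > h$ guarantees uniformly in $\xi$ that Corollary \ref{cor:partialremove} applies (since $\asc(\alcovxi) < h$ by Proposition \ref{prop:rootsys}(i)).

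For step (b), I use the shift operator $S$ to encode $\ehr_{\baralcov}(q-k) = (S^k \ehr_{\baralcov})(q)$, so that
\[
q^\ell = \sum_{\xi \in \Xi} \ehr_{\baralcov}(q - \asc(\alcovxi)) = \Bigl(\Bigl(\sum_{\xi \in \Xi} S^{\asc(\alcovxi)}\Bigr)\ehr_{\baralcov}\Bigr)(q).
\]
By Theorem \ref{thm:eulerianparallelp}, the operator polynomial $\sum_{\xi \in \Xi} S^{\asc(\alcovxi)}$ equals $\geneul_{\Phi}(S)$, which gives the desired identity.

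The main obstacle is step (a): the combinatorial identification of which facets of $\semialcov$ are open versus closed in terms of the sign pattern $\{w\alpha_i\}_{i=0}^{\ell}$. The conceptual content is that the perturbation direction $\rho$, being strictly dominant, detects precisely the positivity of each $w\alpha_i$, so the number of $c_i$'s that must be subtracted matches the definition of $\asc(w)$ on the nose. Once this identification is in hand, the remaining assembly via $S$ and Theorem \ref{thm:eulerianparallelp} is purely formal.
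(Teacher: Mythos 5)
Your proposal is correct and follows essentially the same route as the paper: the paper's proof likewise combines the lattice-point partition (\ref{eq:latticeworp}) with the identity $\#(q\semialcov\cap\cwtl(\Phi))=\ehr_{\baralcov}(q-\asc(\alcovxi))$ and then invokes Theorem \ref{thm:eulerianparallelp} and the shift operator. The only difference is that your step (a) re-derives in detail what the paper isolates as Lemma \ref{lem:alcovediamond} (proved there via the $I$, $J$ description (\ref{eq:Axisemi}) and Corollary \ref{cor:partialremove}), and your facet-by-facet sign analysis matches that argument.
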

Before the proof of this theorem, 
we will analyze the case of a single alcove. 
\begin{lemma}
\label{lem:alcovediamond}
Suppose $q\gg 0$ (indeed $q>h$ is sufficient). Then 
\begin{equation}
\label{eq:alcovediamond}
\#(q \semialcov\cap\cwtl(\Phi))=
\ehr_{\baralcov}(q-\asc(\alcovxi)). 
\end{equation}
\end{lemma}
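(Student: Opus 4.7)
The plan is to reduce the lattice-point count on $q\semialcov$, by an affine change of coordinates, to the count of lattice points in $q\baralcov$ with a selected set of facets removed, and then to quote Corollary \ref{cor:partialremove}. Write $\alcovxi = w\alcov + \gamma$ with $w \in W$ and $\gamma \in \crtl(\Phi)$, and consider the affine map $\phi \colon y \mapsto w(y) + q\gamma$. Then $\phi$ sends $q\alcov$ bijectively onto $q\alcovxi$, preserves $\cwtl(\Phi)$ (since $w$ does and $q\gamma \in q\crtl(\Phi) \subset \cwtl(\Phi)$), and identifies the facet of $q\baralcov$ indexed by $i \in \{0,1,\dots,\ell\}$ with the facet of $q\overline{\alcovxi}$ supported by the hyperplane with normal $w(\alpha_i)$.

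Next I would determine which facets of $\overline{\alcovxi}$ belong to $\semialcov$. By Proposition \ref{prop:semiclosure}, $\semialcov$ is the partial closure of $\alcovxi$ in the direction $-\rho$. Since $(\alpha,\rho) > 0$ precisely when $\alpha$ is a positive root---indeed $(\sum d_j \alpha_j,\rho) = \sum d_j$---a direct computation of the signs of $(w(\alpha_i),\rho)$ on each boundary hyperplane shows that the facet of $\overline{\alcovxi}$ indexed by $i$ lies in $\semialcov$ iff $w(\alpha_i) < 0$; equivalently, the facet is \emph{excluded} from $\semialcov$ iff $w(\alpha_i) > 0$. The case $i = 0$, corresponding to the ``upper'' facet supported by $w(\widetilde{\alpha})$, must be checked separately but (via $\alpha_0 = -\widetilde{\alpha}$) yields the same rule.

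Pulling back by $\phi$ and combining the previous two steps, $\#(q\semialcov \cap \cwtl(\Phi))$ equals the number of lattice points in $q\baralcov$ with the facets $\{F_i : w(\alpha_i) > 0\}$ removed. Corollary \ref{cor:partialremove} evaluates this to $\ehr_{\baralcov}\bigl(q - \sum_{i : w(\alpha_i) > 0} c_i\bigr)$, provided $q$ exceeds the sum of the removed $c_i$'s; since that sum is at most $h = \sum_{i=0}^{\ell} c_i$, the stated condition $q > h$ is more than enough. Finally, by Definition \ref{def:asc} and (\ref{eq:ascgen}), $\sum_{i : w(\alpha_i) > 0} c_i = \asc(w) = \asc(\alcovxi)$, giving the lemma. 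The main obstacle I anticipate is the second step: treating the ``upper'' facet coming from the affine root $\alpha_0 = -\widetilde{\alpha}$ uniformly with the other $\ell$ facets, and confirming that the excluded-facet rule ``$w(\alpha_i) > 0$'' is precisely the index set that appears in the definition of $\asc$; once these sign conventions are pinned down, everything reduces to a single application of Corollary \ref{cor:partialremove}.
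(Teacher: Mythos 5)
Your proof is correct and follows essentially the same route as the paper: both reduce the count on $q\semialcov$ to Corollary \ref{cor:partialremove} by observing that the facets left open in $\semialcov$ are exactly those indexed by $\{i : w(\alpha_i)>0\}$, whose $c_i$'s sum to $\asc(\alcovxi)$. You merely make explicit (via the lattice-preserving map $\phi$ and the sign check against $\rho$) the identifications that the paper's terser proof leaves implicit.
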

\begin{proof}
In the notation of \S \ref{subsec:rs} (see (\ref{eq:Axisemi})), 
$q\semialcov$ is expressed as  
\begin{equation}
q\semialcov=
\left\{x
\in V 
\left|
\begin{array}{ll}
(\alpha, x)>q k_\alpha & \mbox{ for }\alpha\in I\\
(\beta, x)\leq q k_\beta & \mbox{ for }\beta\in J
\end{array}
\right.
\right\}. 
\end{equation}
Hence we have 
\begin{equation}
q\semialcov\cap\cwtl(\Phi)=
\left\{x
\in \cwtl(\Phi) 
\left|
\begin{array}{ll}
(\alpha, x)\geq q k_\alpha+1, & \mbox{for }\alpha\in I\\
(\beta, x)\leq q k_\beta, & \mbox{for }\beta\in J
\end{array}
\right.
\right\}. 
\end{equation}
From Corollary \ref{cor:partialremove} and the definition (\ref{eq:ascgen}) 
of $\asc(\alcovxi)$,  we have the equality (\ref{eq:alcovediamond}). 
\end{proof}

We now turn to the proof of Theorem \ref{thm:worpart}. Using the 
partition (\ref{eq:latticeworp}) and Lemma \ref{lem:alcovediamond}, 
we have 
\begin{equation}
\begin{split}
q^\ell
&=
\#(q\parallelp\cap\cwtl(\Phi))\\
&=
\sum_{\xi\in\Xi}\#(q\semialcov\cap\cwtl(\Phi))\\
&=
\sum_{\xi\in\Xi}\ehr_{\baralcov}(q-\asc(\alcovxi)). 
\end{split}
\end{equation}
Then applying Theorem \ref{thm:eulerianparallelp} and the shift operator, 
the right hand side can be written as 
$(\geneul_\Phi(S)\ehr_{\baralcov})(q)$, which completes the proof. 

\begin{remark}
\label{rem:worpitzkytypeA}
As we noted in Proposition \ref{prop:properties} (4), 
if $\Phi=A_\ell$ then the Eulerian polynomial is equal to 
the classical one. Furthermore, 
the Ehrhart polynomial is explicitly known (\ref{eq:ehrA}). 
Theorem \ref{thm:worpart} 
gives the classical Worpitzky identity (\ref{eq:shiftwor}). 
\end{remark}

\begin{example}
\label{ex:b2}
Let $\Phi=B_2$. Set the simple roots $\alpha_1, \alpha_2$ 
as in Figure \ref{fig:b2}. Then $\widetilde{\alpha}=\varpi_1$. 
Since $f=2$ and $|W|=8$, $\parallelp$ contains $4$ alcoves, say 
$\{A_{\xi}\mid\xi\in\Xi\}=\{A_{\xi_1}^\circ, A_{\xi_2}^\circ, 
A_{\xi_3}^\circ, A_{\xi_4}^\circ\}$ 
with the fundamental alcove $A_{\xi_1}^\circ=\alcov$. 
\begin{figure}[htbp]
\centering
\begin{picture}(100,150)(0,0)
\thicklines

\thicklines

\put(50,50){\circle*{3}}
\put(50,50){\vector(1,0){50}}
\put(105,45){$\alpha_1$}

\put(50,50){\vector(1,1){50}}
\put(105,95){$\widetilde{\alpha}=\varpi_1$}

\put(50,50){\vector(0,1){50}}
\put(35,105){$\varpi_2$}

\put(50,50){\vector(-1,1){50}}
\put(-14,105){$\alpha_2$}

\put(50,50){\vector(-1,-1){50}}
\put(-14,0){$\alpha_0$}

\thinlines

\put(50,100){\line(1,-1){25}}
\put(50,100){\line(1,0){50}}
\put(50,100){\line(1,1){50}}
\put(100,100){\line(0,1){50}}
\put(100,100){\line(-1,1){25}}

\put(105,120){$\parallelp$}

\put(53,72){\footnotesize $A_{\xi_1}^\circ$}
\put(70,89){\footnotesize $A_{\xi_2}^\circ$}
\put(70,107){\footnotesize $A_{\xi_3}^\circ$}
\put(84,122){\footnotesize $A_{\xi_4}^\circ$}

\end{picture}
      \caption{Root system of type $B_2$.}
\label{fig:b2}
\end{figure}
Figure \ref{fig:worpitzky} is the Worpitzky partition of 
$q\parallelp\cap\cwtl(B_2)$ for $q=6$. 
The red dots in Figure \ref{fig:worpitzky} are 
the set $6\parallelp\cap\cwtl(B_2)$, which is 
decomposed into a disjoint sum of simplices of sizes $3, 4, 4$, and $5$. 
The Eulerian polynomial is $\geneul_{B_2}(t)=t+2t^2+t^3$. 
Hence 
\[
\begin{split}
6^2
=&
\ehr_{\baralcov}(5)+
2\ehr_{\baralcov}(4)+
\ehr_{\baralcov}(3)\\
=&
((S+2S^2+S^3)\ehr_{\baralcov})(6)\\
=&
(\geneul_{B_2}(S)\ehr_{\baralcov})(6). 
\end{split}
\]

\begin{figure}[htbp]
\centering
\begin{picture}(100,200)(0,0)
\thicklines

\thicklines

\color{red}

\multiput(20,40)(15,15){6}{\circle*{4}}
\multiput(20,55)(15,15){6}{\circle*{4}}
\multiput(20,70)(15,15){6}{\circle*{4}}
\multiput(20,85)(15,15){6}{\circle*{4}}
\multiput(20,100)(15,15){6}{\circle*{4}}
\multiput(20,115)(15,15){6}{\circle*{4}}

\thinlines

\normalcolor

\put(5,0){\line(0,1){150}}
\put(-10,155){\footnotesize $H_{\alpha_1,0}$}
\put(95,50){\line(0,1){150}}
\put(90,42){\footnotesize $H_{\alpha_1,q}$}

\put(-5,0){\line(1,1){130}}
\put(122,132){\footnotesize $H_{\alpha_2,0}$}

\put(-25,70){\line(1,1){130}}
\put(-32,62){\footnotesize $H_{\alpha_2,q}$}

\put(-5,110){\line(1,-1){65}}
\put(60,32){\footnotesize $H_{\widetilde{\alpha},q}$}

\put(40,155){\line(1,-1){65}}
\put(107,80){\footnotesize $H_{\widetilde{\alpha},2q}$}

\put(-20,100){\line(1,0){140}}
\put(122,97){\footnotesize $H_{\alpha_1+\alpha_2,q}$}

\multiput(5,10)(0,15){7}{\circle{3}}
\multiput(5,10)(15,15){7}{\circle{3}}

\thicklines

\color{blue}

\put(5,10){\line(0,1){15}}
\qbezier(5,10)(8.75,13.75)(12.5,17.5)
\qbezier(5,25)(8.75,21.25)(12.5,17.5)
\put(-7,15){\footnotesize $\alcov$}

\put(20,40){\line(1,1){22.5}}
\put(20,40){\line(0,1){45}}
\put(20,85){\line(1,-1){22.5}}
\put(23,58){\scriptsize $3\baralcov$}

\put(50,70){\line(1,1){30}}
\put(50,70){\line(-1,1){30}}
\put(20,100){\line(1,0){60}}
\put(43,90){\scriptsize $4\baralcov$}

\put(50,145){\line(1,-1){30}}
\put(50,145){\line(-1,-1){30}}
\put(20,115){\line(1,0){60}}
\put(43,120){\scriptsize $4\baralcov$}

\put(95,115){\line(0,1){75}}
\put(95,115){\line(-1,1){37.5}}
\put(95,190){\line(-1,-1){37.5}}
\put(75,150){\footnotesize $5\baralcov$}

\end{picture}
      \caption{Worpitzky partition for $\Phi=B_2$ with $q=6$.}
\label{fig:worpitzky}
\end{figure}
\end{example}
We can apply the above techniques to the Shi and 
Linial arrangements. The number of lattice points 
in $q\parallelp$ minus corresponding hyperplanes are 
expressed in terms of the generalized Eulerian 
polynomial and the Ehrhart quasi-polynomial. 
(See the next section for details.) 
\begin{example}
\label{ex:b2shilinial}
Figure \ref{fig:shilinial} shows the lattice points of 
$(q\parallelp\cap\cwtl(B_2))\smallsetminus
\bigcup_{\alpha, k}(H_{\alpha, kq}\cup H_{\alpha, kq+1})$ and 
$(q\parallelp\cap\cwtl(B_2))\smallsetminus
\bigcup_{\alpha, k}H_{\alpha, kq+1}$ with $q=10$, which 
correspond to the Shi and Linial arrangements, respectively. 
In both cases, 
the decomposition can be described by using the shift 
operator, the Eulerian polynomial and the Ehrhart quasi-polynomial. 
\[
\begin{split}
\ehr_{\baralcov}(5)+
2\ehr_{\baralcov}(4)+
\ehr_{\baralcov}(3)
&=
((S^5+2S^6+S^7)\ehr_{\baralcov})(10)\\
=&
(S^4\geneul_{B_2}(S)\ehr_{\baralcov})(10). 
\end{split}
\]
\[
\begin{split}
\ehr_{\baralcov}(8)+
2\ehr_{\baralcov}(6)+
\ehr_{\baralcov}(4)
&=
((S^2+2S^4+S^6)\ehr_{\baralcov})(10)\\
=&
(\geneul_{B_2}(S^2)\ehr_{\baralcov})(10). 
\end{split}
\]
These computations will be generalized to all the root systems in 
the next section. 
\begin{figure}[htbp]
\centering
\begin{picture}(300,200)(0,0)
\thicklines

\thicklines

\color{red}

\multiput(10,20)(10,10){10}{\circle*{4}}
\multiput(10,30)(10,10){10}{\circle*{4}}
\multiput(10,40)(10,10){10}{\circle*{4}}
\multiput(10,50)(10,10){10}{\circle*{4}}
\multiput(10,60)(10,10){10}{\circle*{4}}
\multiput(10,70)(10,10){10}{\circle*{4}}
\multiput(10,80)(10,10){10}{\circle*{4}}
\multiput(10,90)(10,10){10}{\circle*{4}}
\multiput(10,100)(10,10){10}{\circle*{4}}
\multiput(10,110)(10,10){10}{\circle*{4}}


\normalcolor

\put(0,-5){\line(0,1){155}}
\put(-16,155){\tiny $H_{\alpha_1,0}$}
\put(10,-5){\line(0,1){155}}
\put(7,155){\tiny $H_{\alpha_1,1}$}

\put(100,50){\line(0,1){155}}
\put(95,42){\tiny $H_{\alpha_1,q}$}

\put(-5,-5){\line(1,1){135}}
\put(132,127){\tiny $H_{\alpha_2,0}$}
\put(-5,5){\line(1,1){135}}
\put(132,137){\tiny $H_{\alpha_2,1}$}

\put(-25,75){\line(1,1){130}}
\put(-32,67){\tiny $H_{\alpha_2,q}$}

\put(-5,105){\line(1,-1){65}}
\put(55,35){\tiny $H_{\widetilde{\alpha},q}$}
\put(-5,115){\line(1,-1){70}}
\put(65,45){\tiny $H_{\widetilde{\alpha},q+1}$}

\put(40,160){\line(1,-1){70}}
\put(108,84){\tiny $H_{\widetilde{\alpha},2q}$}
\put(45,165){\line(1,-1){70}}
\put(30,170){\tiny $H_{\widetilde{\alpha},2q+1}$}

\put(-20,100){\line(1,0){140}}
\put(122,97){\tiny $H_{\alpha_1+\alpha_2,q}$}
\put(-20,110){\line(1,0){140}}
\put(122,107){\tiny $H_{\alpha_1+\alpha_2,q+1}$}

\thinlines
\multiput(0,0)(0,10){11}{\circle{3}}
\multiput(0,0)(10,10){11}{\circle{3}}

\thicklines

\color{blue}

\put(20,40){\line(1,1){15}}
\put(20,40){\line(0,1){30}}
\put(20,70){\line(1,-1){15}}

\put(21,52){\tiny $3\baralcov$}

\put(50,70){\line(1,1){20}}
\put(50,70){\line(-1,1){20}}
\put(30,90){\line(1,0){40}}

\put(45,83){\tiny $4\baralcov$}

\put(50,140){\line(1,-1){20}}
\put(50,140){\line(-1,-1){20}}
\put(30,120){\line(1,0){40}}

\put(45,122){\tiny $4\baralcov$}

\put(90,130){\line(0,1){50}}
\put(90,130){\line(-1,1){25}}
\put(90,180){\line(-1,-1){25}}

\put(77,152){\tiny $5\baralcov$}


%
%

\thicklines

\color{red}

\multiput(210,20)(10,10){10}{\circle*{4}}
\multiput(210,30)(10,10){10}{\circle*{4}}
\multiput(210,40)(10,10){10}{\circle*{4}}
\multiput(210,50)(10,10){10}{\circle*{4}}
\multiput(210,60)(10,10){10}{\circle*{4}}
\multiput(210,70)(10,10){10}{\circle*{4}}
\multiput(210,80)(10,10){10}{\circle*{4}}
\multiput(210,90)(10,10){10}{\circle*{4}}
\multiput(210,100)(10,10){10}{\circle*{4}}
\multiput(210,110)(10,10){10}{\circle*{4}}

\thinlines

\normalcolor

\put(200,-5){\line(0,1){155}}
\put(184,155){\tiny $H_{\alpha_1,0}$}

\put(300,50){\line(0,1){155}}
\put(295,42){\tiny $H_{\alpha_1,q}$}

\put(195,-5){\line(1,1){135}}
\put(332,127){\tiny $H_{\alpha_2,0}$}

\put(175,75){\line(1,1){130}}
\put(168,67){\tiny $H_{\alpha_2,q}$}

\put(195,105){\line(1,-1){65}}
\put(255,35){\tiny $H_{\widetilde{\alpha},q}$}

\put(240,160){\line(1,-1){70}}
\put(308,84){\tiny $H_{\widetilde{\alpha},2q}$}

\put(180,100){\line(1,0){140}}
\put(322,97){\tiny $H_{\alpha_1+\alpha_2,q}$}

\thicklines

\put(209.7,-5){\line(0,1){155}}
\put(210,-5){\line(0,1){155}}
\put(210.3,-5){\line(0,1){155}}
\put(207,155){\tiny $H_{\alpha_1,1}$}

\put(194.79,5.21){\line(1,1){135}}
\put(195,5){\line(1,1){135}}
\put(195.21,4.79){\line(1,1){135}}
\put(332,137){\tiny $H_{\alpha_2,1}$}

\put(195.21,115.21){\line(1,-1){70}}
\put(195,115){\line(1,-1){70}}
\put(194.79,114.79){\line(1,-1){70}}
\put(265,45){\tiny $H_{\widetilde{\alpha},q+1}$}

\put(245.21,165.21){\line(1,-1){70}}
\put(245,165){\line(1,-1){70}}
\put(244.79,164.79){\line(1,-1){70}}
\put(230,170){\tiny $H_{\widetilde{\alpha},2q+1}$}

\put(180,110.3){\line(1,0){140}}
\put(180,110){\line(1,0){140}}
\put(180,109.7){\line(1,0){140}}
\put(322,107){\tiny $H_{\alpha_1+\alpha_2,q+1}$}

\thinlines

\multiput(200,0)(0,10){11}{\circle{3}}
\multiput(200,0)(10,10){11}{\circle{3}}

\thicklines

\color{blue}

\put(220,40){\line(1,1){20}}
\put(220,40){\line(0,1){40}}
\put(220,80){\line(1,-1){20}}
\put(221,52){\tiny $4\baralcov$}

\put(250,70){\line(1,1){30}}
\put(250,70){\line(-1,1){30}}
\put(220,100){\line(1,0){60}}
\put(245,82){\tiny $6\baralcov$}

\put(250,150){\line(-1,-1){30}}
\put(250,150){\line(1,-1){30}}
\put(220,120){\line(1,0){60}}
\put(245,122){\tiny $6\baralcov$}

\put(300,120){\line(0,1){80}}
\put(300,120){\line(-1,1){40}}
\put(300,200){\line(-1,-1){40}}
\put(280,152){\tiny $8\baralcov$}

\end{picture}
      \caption{Shi and Linial arrangements ($\Phi=B_2, q=10$).}
\label{fig:shilinial}
\end{figure}

\end{example}

\section{Shi and Linial arrangements}

\label{sec:main}

We will apply the Worpitzky partition from the previous section to 
the computation of characteristic quasi-polynomials for the Shi and Linial 
arrangements. 

\subsection{Shi arrangements}
\label{subsec:shi}

\begin{theorem}
\label{thm:shi}
Let $k\in\bZ_{>0}$. 
The characteristic quasi-polynomial $\chi_{\quasi}(\A_\Phi^{[1-k,k]}, t)$ 
of the extended Shi arrangement $\A_\Phi^{[1-k, k]}$ is equal to the 
polynomial $(t-kh)^\ell$. 
\end{theorem}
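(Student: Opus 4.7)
My plan is to establish the intermediate formula
\[
\chi_{\quasi}(\A_\Phi^{[1-k,k]},q)=\bigl(S^{kh}\geneul_\Phi(S)\ehr_{\baralcov}\bigr)(q)
\qquad(q\gg 0),
\]
and then, invoking the Worpitzky identity (Theorem~\ref{thm:worpart}), rewrite the right-hand side as $S^{kh}q^\ell=(q-kh)^\ell$. Since the quasi-polynomial $\chi_{\quasi}$ agrees with the single polynomial $(t-kh)^\ell$ on all sufficiently large $q$ in every residue class modulo its period, each of its constituent polynomials must equal $(t-kh)^\ell$, and therefore $\chi_{\quasi}$ is itself this polynomial.

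To prove the intermediate formula, I use the bijection $q\parallelp\cap\cwtl(\Phi)\simeq\cwtl(\Phi)/q\cwtl(\Phi)$ from \S\ref{subsec:charquasi-poly} together with the Worpitzky partition~(\ref{eq:latticeworp}) to write
\[
\chi_{\quasi}(\A_\Phi^{[1-k,k]},q)=\sum_{\xi\in\Xi}N_\xi(q),
\]
where $N_\xi(q)$ counts lattice points $x\in q\semialcov\cap\cwtl(\Phi)$ satisfying the Shi congruences $(\alpha,x)\not\equiv j\pmod q$ for all $\alpha\in\Phi^+$ and $j\in[1-k,k]$. The key lemma I aim to prove is
\[
N_\xi(q)=\ehr_{\baralcov}\bigl(q-kh-\asc(\alcovxi)\bigr)
\qquad(q\gg 0).
\]
Summing this over $\xi$ and applying Theorem~\ref{thm:eulerianparallelp} (so that $\sum_\xi t^{\asc(\alcovxi)}=\geneul_\Phi(t)$) then yields the intermediate formula.

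The key lemma will be established via two successive affine substitutions. Writing $\alcovxi=w\alcov+\gamma$ with $w\in W$ and $\gamma\in\crtl(\Phi)$, the first substitution $y':=w^{-1}(x-q\gamma)$ transforms $q\semialcov$ into a ``twisted'' semi-closed version of $q\alcov$ (with each facet of $q\alcov$ open or closed according to the sign of $w\alpha_i$) and converts the Shi condition into its twisted form: $(\beta,y')\not\equiv j\pmod q$ for $\beta\in\Phi^+$, where $j\in[1-k,k]$ when $w\beta>0$ and $j\in[-k,k-1]$ when $w\beta<0$. Set $s_i:=k+1$ if $w\alpha_i>0$ and $s_i:=k$ if $w\alpha_i<0$, for $i=0,1,\dots,\ell$ (with $\alpha_0=-\widetilde{\alpha}$). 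I will show that the twisted Shi condition is equivalent to the wall conditions
\[
(\alpha_i,y')\geq s_i\quad(i=1,\dots,\ell),
\qquad
(\widetilde{\alpha},y')\leq q-s_0.
\]
The second substitution $y'':=y'-\sum_{i=1}^\ell s_i\varpi_i^\lor$, combined with $(\alpha_j,\varpi_i^\lor)=\delta_{ij}$, $(\widetilde{\alpha},\varpi_i^\lor)=c_i$, and the identity
\[
\sum_{i=0}^\ell s_ic_i=k\sum_{i=0}^\ell c_i+\sum_{i\colon w\alpha_i>0}c_i=kh+\asc(\alcovxi)
\]
(from Proposition~\ref{prop:rootsys}(i) and Definition~\ref{def:asc}), translates these into $(\alpha_i,y'')\geq 0$ and $(\widetilde{\alpha},y'')\leq q-kh-\asc(\alcovxi)$. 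Hence $y''$ ranges bijectively over $(q-kh-\asc(\alcovxi))\baralcov\cap\cwtl(\Phi)$, proving the key lemma.

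The principal technical difficulty is verifying that the twisted Shi condition for every positive root $\beta$ follows from the wall conditions on simple roots and $\widetilde{\alpha}$. For $\beta=\sum n_j\alpha_j\in\Phi^+$ with height $\mathrm{ht}(\beta):=\sum n_j\geq 2$, the estimates
\[
(\beta,y')\geq\sum n_js_j\geq k\cdot\mathrm{ht}(\beta)\geq 2k
\qquad\text{and}\qquad
(\beta,y')\leq(\widetilde{\alpha},y')-k\bigl(h-1-\mathrm{ht}(\beta)\bigr)
\]
place $(\beta,y')$ strictly inside the Shi-allowed band $[k+1,q-k]$ (or $[k,q-k-1]$, depending on the sign of $w\beta$) whenever $k\geq 1$ and $q\gg 0$, so the twisted Shi condition for such $\beta$ is automatic; the cases $\mathrm{ht}(\beta)=1$ and $\beta=\widetilde{\alpha}$ are precisely the stated wall conditions.
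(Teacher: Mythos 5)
Your proof follows the same route as the paper's: the Worpitzky partition reduces the computation to the key count $\#(q\semialcov\cap X_q)=\ehr_{\baralcov}(q-kh-\asc(\alcovxi))$, which is then summed via Theorem \ref{thm:eulerianparallelp} and collapsed to $(q-kh)^\ell$ by Theorem \ref{thm:worpart}. Your explicit change of coordinates and height estimates simply spell out, correctly, the step the paper delegates to Corollary \ref{cor:partialremove} and Lemma \ref{lem:alcovediamond} (in particular that the Shi congruences for non-wall positive roots hold automatically), so the argument is essentially identical.
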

\begin{proof}
Suppose $q\gg 0$ (indeed $q>(k+1)h$ is sufficient). Set 
\begin{equation}
X_q:=
\cwtl(\Phi)\smallsetminus
\bigcup_{\substack{\alpha\in\Phi^+,\\ i, m\in\bZ,\\
1-k\leq i\leq k}}H_{\alpha, mq+i}. 
\end{equation}
We have to compute (cf. \S \ref{subsec:charquasi-poly}), 
\begin{equation}
\chi_{\quasi}(\A_\Phi^{[1-k, k]}, q)=
\#(q\parallelp\cap X_q). 
\end{equation}
Consider the Worpitzky partition 
$q\parallelp\cap\cwtl(\Phi)=
\bigsqcup_{\xi\in\Xi}(q\semialcov\cap\cwtl(\Phi))$. 
We have 
\begin{equation}
\chi_{\quasi}(\A_\Phi^{[1-k, k]}, q)=
\sum_{\xi\in\Xi}
\# (q\semialcov\cap X_q). 
\end{equation}
In the notation of Definition \ref{def:semialcov}, we have 
\begin{equation}
q\semialcov\cap X_q=
\left\{x
\in \cwtl(\Phi)
\left|
\begin{array}{ll}
(\alpha, x)\geq q k_\alpha +k+1& \mbox{ for }\alpha\in I\\
(\beta, x)\leq qk_\beta -k& \mbox{ for }\beta\in J
\end{array}
\right.
\right\}. 
\end{equation}
Hence by Corollary \ref{cor:partialremove} and Lemma 
\ref{lem:alcovediamond}, 
\begin{equation}
\#(q\semialcov\cap X_q)=
\ehr_{\baralcov}(q-kh-\asc(\alcovxi)). 
\end{equation}
Then, applying Theorem \ref{thm:eulerianparallelp}, 
\begin{equation}
\chi_{\quasi}(\A_{\Phi}^{[1-k, k]}, q)=
(\geneul_\Phi(S)\ehr_{\baralcov})(q-kh). 
\end{equation}
By Theorem \ref{thm:worpart}, the right-hand side is equal to $(q-kh)^\ell$. 
\end{proof}

By considering the case that $q$ is relatively prime to $\widetilde{n}$, 
we can conclude that the characteristic polynomial is 
\[
\chi(\A_\Phi^{[1-k,k]}, t)=(t-kh)^\ell. 
\]
This gives an alternative proof of Theorem \ref{thm:charpoly} (ii).

\subsection{Linial arrangements}
\label{subsec:linial}

In this section, we express the characteristic quasi-polynomial 
for the Linial arrangement $\A_\Phi^{[1,1+n]}$ (with $n\geq 1$) and its 
extension $\A_\Phi^{[1-k,1+n+k]}$ (with $n\geq 1, k\geq 0$) in terms 
of generalized Eulerian polynomials and Ehrhart quasi-polynomials. 

\begin{theorem}
\label{thm:linial}
Let $n\geq 1$. 
The characteristic quasi-polynomial of 
the Linial arrangement $\A_\Phi^{[1, n]}$ is 
\begin{equation}
\label{eq:linial}
\chi_{\quasi}(\A_\Phi^{[1, n]}, q)=
(\geneul_\Phi(S^{n+1})\ehr_{\baralcov})(q). 
\end{equation}
\end{theorem}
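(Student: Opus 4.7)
The proof adapts the strategy of Theorem~\ref{thm:shi}. Fix $q\gg 0$ (for instance, $q>(n+1)h$ suffices), and set
\[
X_q := \cwtl(\Phi)\setminus\bigcup_{\substack{\alpha\in\Phi^+,\ m\in\bZ\\ 1\le i\le n}} H_{\alpha,mq+i},
\]
so that $\chi_{\quasi}(\A_\Phi^{[1,n]},q)=\#(q\parallelp\cap X_q)$. The Worpitzky partition~(\ref{eq:latticeworp}) gives
\[
\chi_{\quasi}(\A_\Phi^{[1,n]},q)=\sum_{\xi\in\Xi}\#\bigl(q\semialcov\cap X_q\bigr).
\]

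The crux is to describe $q\semialcov\cap X_q$ for each $\xi$. Unlike the Shi case, the forbidden residues $\{1,\ldots,n\}\bmod q$ lie just above $0$ and do \emph{not} include $0$ itself or residues near $q$; consequently the tightening is asymmetric---only the lower bounds at $I$-facets shift inward by $n$, while the upper bounds at $J$-facets remain unchanged. In the notation of~(\ref{eq:Axisemi}), I claim
\[
q\semialcov\cap X_q=\bigl\{x\in\cwtl(\Phi):\ (\alpha,x)\ge qk_\alpha+n+1\ \forall\alpha\in I;\ (\beta,x)\le qk_\beta\ \forall\beta\in J\bigr\}.
\]
The inclusion $\subseteq$ is immediate from residue avoidance on $I$-roots. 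The reverse inclusion requires verifying that the residue-avoidance conditions on $\beta\in J$ and on $\gamma\in\Phi^+\setminus(I\cup J)$ are automatically satisfied. After translating by an appropriate lattice vector (for $\alcovxi=\alcov$ one uses $(n+1)\rho$ with $\rho=\sum_{i=1}^{\ell}\varpi_i^\lor$ as in Proposition~\ref{prop:semiclosure}, and in general an appropriate $W$-translate thereof), the tightened region identifies with $(q-(n+1)\asc(\alcovxi))\baralcov\cap\cwtl(\Phi)$; within this scaled simplex every positive root $\gamma$ takes values in a range lying strictly between consecutive forbidden residue cycles, for $q$ sufficiently large.

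Given this description, the count follows by iterating Lemma~\ref{lem:key} (equivalently, by combining Lemma~\ref{lem:alcovediamond} with Corollary~\ref{cor:partialremove}): the semi-closure contributes a shift of $\asc(\alcovxi)=\sum_{\alpha\in I}c_\alpha$ to the Ehrhart argument, while the additional inward shift of $n$ units at each $I$-facet contributes a further $nc_\alpha$ per facet, for a total of $(n+1)\asc(\alcovxi)$. Thus
\[
\#\bigl(q\semialcov\cap X_q\bigr)=\ehr_{\baralcov}\bigl(q-(n+1)\asc(\alcovxi)\bigr).
\]

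Finally, summing over $\xi\in\Xi$ and invoking Theorem~\ref{thm:eulerianparallelp} to recognise $\geneul_\Phi(t)=\sum_{\xi}t^{\asc(\alcovxi)}$, we obtain
\[
\chi_{\quasi}(\A_\Phi^{[1,n]},q)=\sum_{\xi\in\Xi}\ehr_{\baralcov}\bigl(q-(n+1)\asc(\alcovxi)\bigr)=\bigl(\geneul_\Phi(S^{n+1})\ehr_{\baralcov}\bigr)(q),
\]
as desired. The principal obstacle is the asymmetric inequality description of $q\semialcov\cap X_q$: verifying that tightening \emph{only} the $I$-facet lower bounds suffices to enforce residue avoidance across \emph{every} positive root---a subtlety absent in the symmetric Shi setting.
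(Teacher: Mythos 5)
Your proof is correct and follows essentially the same route as the paper: the Worpitzky partition, the observation that intersecting $q\semialcov$ with $X_q$ tightens only the $I$-facet inequalities by $n$ additional units, the count via Lemma~\ref{lem:alcovediamond} and Corollary~\ref{cor:partialremove}, and the final summation via Theorem~\ref{thm:eulerianparallelp}. The only difference is that you explicitly flag and sketch the verification that residue avoidance for roots in $J$ and outside $I\cup J$ is automatic, a point the paper's proof passes over silently.
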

\begin{proof}
Suppose $q\gg 0$ (indeed $q>(n+1)h$ is sufficient). 
Set 
\begin{equation}
X_q:=
\cwtl(\Phi)\smallsetminus
\bigcup_{\substack{\alpha\in\Phi^+,\\ i, m\in\bZ,\\
1\leq i\leq n}}H_{\alpha, mq+i}. 
\end{equation}
In view of the bijection (\ref{eq:bij}), 
we have to compute (cf. \S \ref{subsec:charquasi-poly}) 
\begin{equation}
\chi_{\quasi}(\A_\Phi^{[1, n]}, q)=
q\parallelp\cap X_q. 
\end{equation}
By the Worpitzky partition 
$q\parallelp\cap\cwtl(\Phi)=
\bigsqcup_{\xi\in\Xi}(q\semialcov\cap\cwtl(\Phi))$, 
we have 
\begin{equation}
\chi_{\quasi}(\A_\Phi^{[1,n]}, q)=
\sum_{\xi\in\Xi}
\# (q\semialcov\cap X_q). 
\end{equation}
In the notation of Definition \ref{def:semialcov}, we have 
\begin{equation}
q\semialcov\cap X_q=
\left\{x
\in \cwtl(\Phi)
\left|
\begin{array}{ll}
(\alpha, x)\geq q k_\alpha +n+1& \mbox{ for }\alpha\in I\\
(\beta, x)\leq qk_\beta & \mbox{ for }\beta\in J
\end{array}
\right.
\right\}. 
\end{equation}
Hence by Corollary \ref{cor:partialremove} and Lemma 
\ref{lem:alcovediamond}, 
\begin{equation}
\#(q\semialcov\cap X_q)=
\ehr_{\baralcov}(q-(n+1)\asc(\alcovxi)). 
\end{equation}
Then, applying Theorem \ref{thm:eulerianparallelp}, we obtain 
\begin{equation}
\chi_{\quasi}(\A_{\Phi}^{[1,n]}, q)=
(\geneul_\Phi(S^{n+1})\ehr_{\baralcov})(q). 
\end{equation}
\end{proof}
Moreover, by an argument similar to that in the proof of 
Theorem \ref{thm:shi}, we have the following. 
\begin{theorem}
\label{thm:extendedlinial}
Let $n\geq 1$ and $k\geq 0$. 
The characteristic quasi-polynomial of 
the Linial arrangement $\A_\Phi^{[1-k, n+k]}$ is 
\begin{equation}
\label{eq:extendedlinial}
\chi_{\quasi}(\A_\Phi^{[1-k, n+k]}, q)=
\chi_{\quasi}(\A_\Phi^{[1, n]}, q-kh).
\end{equation}
\end{theorem}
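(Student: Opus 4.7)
The plan is to mimic the proofs of Theorems \ref{thm:shi} and \ref{thm:linial}, with the extra $k$-parameter on each side of the interval $[1-k, n+k]$ producing precisely the $-kh$ shift in the quasi-polynomial variable. First I would fix $q \gg 0$ (for instance $q > (n + 2k + 1)h$ is sufficient) and define
$$X_q := \cwtl(\Phi) \smallsetminus \bigcup_{\substack{\alpha \in \Phi^+,\\ i, m \in \bZ,\\ 1-k \leq i \leq n+k}} H_{\alpha, mq+i},$$
so that $\chi_{\quasi}(\A_\Phi^{[1-k, n+k]}, q) = \#(q\parallelp \cap X_q)$. The Worpitzky partition then decomposes this as $\sum_{\xi \in \Xi}\#(q\semialcov \cap X_q)$.

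For each partially closed alcove $\semialcov$ written as in Definition \ref{def:semialcov}, a direct analysis of which residues modulo $q$ are excluded on each facet gives
$$q\semialcov \cap X_q = \left\{x \in \cwtl(\Phi) \left| \begin{array}{ll} (\alpha, x) \geq qk_\alpha + n + k + 1 & \mbox{for } \alpha \in I \\ (\beta, x) \leq qk_\beta - k & \mbox{for } \beta \in J \end{array} \right. \right\};$$
the $n+k$ positive excluded residues $1,\dots,n+k$ eliminate layers on the $I$-side, and the $k$ non-positive excluded residues $1-k,\dots,0$ eliminate layers on the $J$-side. Compared with Lemma \ref{lem:alcovediamond}, this shifts each $I$-facet inward by an extra $n+k$ layers and each $J$-facet by an extra $k$ layers. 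Iterating Corollary \ref{cor:partialremove} and using the identity $\asc(\alcovxi) + \dsc(\alcovxi) = \sum_{i=0}^{\ell} c_i = h$ from Proposition \ref{prop:rootsys}(i), the additional reduction in the Ehrhart argument is
$$(n+k)\asc(\alcovxi) + k\dsc(\alcovxi) = n\asc(\alcovxi) + kh,$$
which, combined with the original $\asc(\alcovxi)$-shift from Lemma \ref{lem:alcovediamond}, yields
$$\#(q\semialcov \cap X_q) = \ehr_{\baralcov}\bigl((q - kh) - (n+1)\asc(\alcovxi)\bigr).$$

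Summing over $\xi \in \Xi$ and applying Theorem \ref{thm:eulerianparallelp} in shift-operator form gives
$$\chi_{\quasi}(\A_\Phi^{[1-k, n+k]}, q) = \bigl(\geneul_\Phi(S^{n+1})\,\ehr_{\baralcov}\bigr)(q - kh),$$
which by Theorem \ref{thm:linial} is precisely $\chi_{\quasi}(\A_\Phi^{[1, n]}, q - kh)$. Since two quasi-polynomials that agree for all $q \gg 0$ are identical, the identity holds for all $q$. The main obstacle is the careful residue bookkeeping that produces the asymmetric shift pattern ($n+k$ layers on $I$, $k$ layers on $J$); once this is in hand, the crucial algebraic step is the identity $\asc + \dsc = h$, which repackages the asymmetric shift as a uniform $kh$ plus the $(n+1)\asc$ shift already appearing in the Linial computation.
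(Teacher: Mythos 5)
Your proposal is correct and follows exactly the route the paper intends: the paper proves Theorem \ref{thm:extendedlinial} by noting it follows ``by an argument similar to that in the proof of Theorem \ref{thm:shi}'', and your residue bookkeeping ($n+k+1$ layers on the $I$-facets, $k$ layers on the $J$-facets, combined via $\asc+\dsc=h$ to give the total shift $(n+1)\asc(\alcovxi)+kh$) is precisely that argument, correctly executed.
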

Recall that by Theorem \ref{thm:ktt}, 
$\chi_{\quasi}(\A_\Phi^{[1-k, n+k]}, q)$ is a 
quasi-polynomial with $\gcd$-property. Furthermore, 
the Coxeter number $h$ is divisible by the radical 
$\rad(\widetilde{n})$ of the period $\widetilde{n}$ 
(Theorem \ref{thm:suter} (v)). 
Hence if $q$ is relatively prime to 
the period $\widetilde{n}$, then $q-kh$ is also relatively prime 
to $\widetilde{n}$. 
Hence 
$\# M(\A_\Phi^{[1, n]}, q)$ and 
$\# M(\A_\Phi^{[1,n]}, q-kh)$ are computed by using the 
same polynomial, the characteristic polynomial. Thus we obtain the 
following. 

\begin{corollary}
\label{cor:shift}
\begin{equation}
\label{eq:shift}
\chi(\A_\Phi^{[1-k, n+k]}, t)=
\chi(\A_\Phi^{[1, n]}, t-kh). 
\end{equation}
\end{corollary}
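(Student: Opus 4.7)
The plan is to derive Corollary \ref{cor:shift} directly from the quasi-polynomial statement in Theorem \ref{thm:extendedlinial} by extracting the constituent of the characteristic quasi-polynomial corresponding to $\gcd(q,\widetilde{n})=1$. Recall from (\ref{eq:chig1}) and Theorem \ref{thm:ktt} that the characteristic polynomial $\chi(\A,t)$ equals $g_1(t)$, i.e.\ the polynomial component of $\chi_{\quasi}(\A,t)$ corresponding to those integers $q$ with $\gcd(q,\widetilde{n})=1$, where $\widetilde{n}$ is any period of the quasi-polynomial. Using the formulas from Theorem \ref{thm:shi} and Theorem \ref{thm:linial}, the period $\widetilde{n}$ from Table \ref{fig:table} works simultaneously for both $\chi_{\quasi}(\A_\Phi^{[1-k,n+k]},t)$ and $\chi_{\quasi}(\A_\Phi^{[1,n]},t)$, since both are expressed through $\ehr_{\baralcov}$ composed with shift operators.

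First I would observe that for $q$ with $\gcd(q,\widetilde{n})=1$, one has
\[
\chi_{\quasi}(\A_\Phi^{[1-k,n+k]},q)=\chi(\A_\Phi^{[1-k,n+k]},q).
\]
Second, I would like to replace the right-hand side of (\ref{eq:extendedlinial}) by the characteristic polynomial evaluated at $q-kh$, i.e.\ to know that
\[
\chi_{\quasi}(\A_\Phi^{[1,n]},q-kh)=\chi(\A_\Phi^{[1,n]},q-kh).
\]
For this I need $\gcd(q-kh,\widetilde{n})=1$. This is precisely where Theorem \ref{thm:suter}(v) enters: since $\rad(\widetilde{n})\mid h$, every prime divisor of $\widetilde{n}$ also divides $h$, so shifting $q$ by a multiple of $h$ does not change the set of primes in $\widetilde{n}$ that divide it. Consequently $\gcd(q,\widetilde{n})=1$ implies $\gcd(q-kh,\widetilde{n})=1$.

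Combining these two observations with Theorem \ref{thm:extendedlinial} yields
\[
\chi(\A_\Phi^{[1-k,n+k]},q)=\chi(\A_\Phi^{[1,n]},q-kh)
\]
for every integer $q$ coprime to $\widetilde{n}$. Since there are infinitely many such $q$ (by Dirichlet, or simply because the set is a nonempty union of arithmetic progressions), and both sides are polynomials in $q$, they must coincide as polynomials in $t$, which is the claimed identity (\ref{eq:shift}).

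The main obstacle, if any, is conceptual rather than computational: one must be careful that the quasi-polynomial identity of Theorem \ref{thm:extendedlinial} does \emph{not} by itself imply the polynomial identity, because the equation $\chi_{\quasi}(\A_\Phi^{[1-k,n+k]},q)=\chi_{\quasi}(\A_\Phi^{[1,n]},q-kh)$ a priori relates \emph{different} constituents of the two quasi-polynomials (the one indexed by $\gcd(q,\widetilde{n})$ on the left and the one indexed by $\gcd(q-kh,\widetilde{n})$ on the right). The divisibility $\rad(\widetilde{n})\mid h$ is exactly what is needed to match the constituent $g_1$ on both sides, and it is the crucial ingredient making the reduction go through.
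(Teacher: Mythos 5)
Your proposal is correct and follows essentially the same route as the paper: both arguments invoke the $\gcd$-property of the characteristic quasi-polynomial, use Theorem \ref{thm:suter}(v) (that $\rad(\widetilde{n})\mid h$) to conclude that $\gcd(q,\widetilde{n})=1$ implies $\gcd(q-kh,\widetilde{n})=1$, and then specialize Theorem \ref{thm:extendedlinial} to the constituent $g_1$, which is the characteristic polynomial by (\ref{eq:chig1}). Your explicit remark that the divisibility $\rad(\widetilde{n})\mid h$ is exactly what matches the constituent $g_1$ on both sides is precisely the point the paper is making, just stated more carefully.
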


Now we have obtained two expressions of 
$\chi(\A_\Phi^{[1,n]}, t)$ for $\Phi=A_\ell$. The comparison 
of these two expressions yields a useful congruence relation 
concerning the classical Eulerian polynomial $\eul_\ell(t)$. 
Let $\Phi=A_\ell$. Set 
$g(t)=\frac{(t+1)(t+2)\cdots (t+\ell)}{\ell!}$. Then Theorem \ref{thm:linial} 
asserts that 
\begin{equation}
\label{eq:newexp}
\chi(\A_\Phi^{[1,n]}, t)=\eul_\ell(S^{n+1})g(t). 
\end{equation}
On the other hand, by formula (\ref{eq:extendedlinialA}) and the 
Worpitzky identity (\ref{eq:shiftwor}), we have another expression 
\begin{equation}
\label{eq:athana}
\chi(\A_\Phi^{[1,n]}, t)=
\left(
\frac{1+S+S^2+\dots+S^n}{1+n}
\right)^{\ell+1}
\eul_\ell(S)g(t). 
\end{equation}
By comparing the two formulae 
(\ref{eq:newexp}) and (\ref{eq:athana}) and using 
Proposition \ref{prop:annihilate}, 
we have the following congruence relation. 
\begin{proposition}
\label{prop:congruence}
Let $\ell\geq 1, m\geq 2$. Then 
\begin{equation}
\label{eq:congr}
\eul_\ell(S^{m})\equiv
\left(
\frac{1+S+S^2+\dots+S^{m-1}}{m}
\right)^{\ell+1}
\eul_\ell(S) 
\mod\ (S-1)^{\ell+1}. 
\end{equation}
\end{proposition}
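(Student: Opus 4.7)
The strategy is to read off the congruence directly from the two formulae (\ref{eq:newexp}) and (\ref{eq:athana}) for $\chi(\A_{A_\ell}^{[1,n]}, t)$ by invoking Proposition~\ref{prop:annihilate}.

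First I would set $n = m - 1$, so that the hypothesis $n \ge 1$ is the same as $m \ge 2$. Writing $g(t) = \frac{(t+1)(t+2)\cdots(t+\ell)}{\ell!}$, which is the Ehrhart polynomial of the fundamental alcove for $A_\ell$ (by (\ref{eq:ehrA})), I would note that $g(t) \in \bQ[t]$ is a polynomial of degree exactly $\ell$.

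Next I would equate the two expressions: since both (\ref{eq:newexp}) and (\ref{eq:athana}) give $\chi(\A_{A_\ell}^{[1,n]}, t)$, subtracting them yields
\begin{equation*}
\left[\,\eul_\ell(S^{m}) - \left(\frac{1+S+S^2+\dots+S^{m-1}}{m}\right)^{\ell+1} \eul_\ell(S)\,\right] g(t) \;=\; 0.
\end{equation*}
Call the operator in square brackets $h(S) \in \bQ[S]$. Since $h(S)$ annihilates the degree-$\ell$ polynomial $g(t)$, Proposition~\ref{prop:annihilate} forces $(1-S)^{\ell+1} \mid h(S)$ in $\bQ[S]$. Because $(1-S)^{\ell+1}$ and $(S-1)^{\ell+1}$ differ only by a sign, this is precisely the congruence (\ref{eq:congr}) modulo $(S-1)^{\ell+1}$, which is the assertion of the proposition.

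There is essentially no obstacle: the key inputs (the two expressions for the Linial characteristic polynomial in type $A_\ell$, and the characterization of annihilators of polynomials by powers of $1-S$) are already established in the excerpt. The only point worth double-checking is that Proposition~\ref{prop:annihilate} is stated over $\bR[S]$ and $\bR[t]$, but the argument works verbatim over $\bQ$, so applying it to $h(S) \in \bQ[S]$ and $g(t) \in \bQ[t]$ is unproblematic.
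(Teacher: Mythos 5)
Your argument is correct and is exactly the paper's own proof: the paper likewise equates the two expressions (\ref{eq:newexp}) and (\ref{eq:athana}) for $\chi(\A_{A_\ell}^{[1,n]},t)$ and invokes Proposition~\ref{prop:annihilate} to conclude that the difference of the two operators is divisible by $(S-1)^{\ell+1}$. Your additional remarks (that $\deg g = \ell$, the substitution $n=m-1$, and that Proposition~\ref{prop:annihilate} applies verbatim over $\bQ$) are accurate but only make explicit what the paper leaves implicit.
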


\subsection{The functional equation}
\label{subsec:functeq}

Next we prove the functional equation at the level of 
characteristic quasi-polynomials. The duality of 
the generalized Eulerian polynomial plays a crucial role 
in the proof. 

\begin{theorem}
\label{thm:dualityquasichar}
\begin{equation}
\label{eq:quasifuncteqLinial}
\chi_{\quasi}(\A_\Phi^{[1,n]}, nh-t)=(-1)^\ell
\chi_{\quasi}(\A_\Phi^{[1,n]}, t). 
\end{equation}
\end{theorem}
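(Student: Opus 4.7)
The plan is to combine the formula of Theorem \ref{thm:linial}, the duality of the generalized Eulerian polynomial (Proposition \ref{prop:properties}(2)), and the duality of the Ehrhart quasi-polynomial of $\baralcov$ (Corollary \ref{cor:duality}). All three ingredients are already in hand, so the proof should be a direct calculation with the shift operator.

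First I would fix the expansion $\geneul_{\Phi}(t)=\sum_{k=1}^{h-1}a_k t^k$, where the range of summation comes from the computation $\asc(e)=h-1$ and $\asc(w_0)=1$ via (\ref{eq:ascdsc}). By Proposition \ref{prop:properties}(2), the coefficients satisfy the palindromic relation $a_k=a_{h-k}$. Theorem \ref{thm:linial} then reads
\begin{equation*}
\chi_{\quasi}(\A_{\Phi}^{[1,n]},q)=\sum_{k=1}^{h-1}a_k\,\ehr_{\baralcov}\bigl(q-(n+1)k\bigr),
\end{equation*}
an equality of quasi-polynomials in $q$.

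Next I would substitute $q=nh-t$ and apply the change of index $k\mapsto h-k$ using $a_k=a_{h-k}$. A short manipulation gives
\begin{equation*}
nh-t-(n+1)(h-k)=(n+1)k-t-h,
\end{equation*}
so that
\begin{equation*}
\chi_{\quasi}(\A_{\Phi}^{[1,n]},nh-t)=\sum_{k=1}^{h-1}a_k\,\ehr_{\baralcov}\bigl((n+1)k-t-h\bigr).
\end{equation*}
Finally, Corollary \ref{cor:duality}, applied with $q=(n+1)k-t$, converts each term $\ehr_{\baralcov}(q-h)$ into $(-1)^{\ell}\ehr_{\baralcov}(-q)=(-1)^{\ell}\ehr_{\baralcov}(t-(n+1)k)$, and pulling the sign out of the sum yields the desired identity.

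I do not expect a serious obstacle: the palindromic identity for $\geneul_{\Phi}$ and the Ehrhart duality are exactly designed to fit together in this way. The only minor care-points are making sure the identity is read as an equality of quasi-polynomials (both sides are quasi-polynomials that agree for $q\gg 0$, hence agree everywhere), and keeping track of the shift by $h$ between the two dualities so that the ranges of the two summations match up after reindexing.
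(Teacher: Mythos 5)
Your proposal is correct and follows essentially the same route as the paper: both start from the expansion $\chi_{\quasi}(\A_{\Phi}^{[1,n]},q)=\sum_{k}a_k\,\ehr_{\baralcov}(q-(n+1)k)$ of Theorem \ref{thm:linial} and combine the palindromic property $a_k=a_{h-k}$ with Corollary \ref{cor:duality}, merely applying these two steps in the opposite order (the paper uses Ehrhart duality first and then reindexes). The bookkeeping in your computation checks out, so no gap remains.
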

\begin{proof}
Let $q\in\bZ$. 
We set $\geneul_\Phi(t)=\sum_{i=1}^{h-1}a_it^i$. 
Using Corollary \ref{cor:duality}, 
\[
\begin{split}
\chi_{\quasi}(\A_\Phi^{[1,n]}, nh-q)
&=
\geneul_\Phi(S^{n+1})\ehr_{\baralcov}(nh-q)\\
&=
\sum_{i=1}^{h-1}a_i\ehr_{\baralcov}(nh-q-(n+1)i)\\
&=
(-1)^\ell 
\sum_{i=1}^{h-1}a_i\ehr_{\baralcov}(q+(n+1)i-nh-h)\\
&=
(-1)^\ell 
\sum_{i=1}^{h-1}a_i\ehr_{\baralcov}(q-(n+1)(h-i)). 
\end{split}
\]
By applying the duality of $a_i=a_{h-i}$ 
(Proposition \ref{prop:properties} (2)), the right hand side 
is equal to 
\[
\begin{split}
(-1)^\ell 
\sum_{i=1}^{h-1}a_i\ehr_{\baralcov}(q-(n+1)i)
=&(-1)^\ell\geneul_\Phi(S^{n+1})\ehr_{\baralcov}(q)\\
=&(-1)^\ell\chi_{\quasi}(\A_{\Phi}^{[1,n]}, q). 
\end{split}
\]
\end{proof}
Recall that 
if $q$ is relatively prime to $\widetilde{n}$, then $mh-q$ is also 
relatively prime to $\widetilde{n}$ (Theorem \ref{thm:suter} (v)). 
By combining 
Theorem \ref{thm:charpoly}, 
Theorem \ref{thm:extendedlinial}, and 
Theorem \ref{thm:dualityquasichar}, we can formulate 
the functional equation. 
\begin{corollary}
\label{cor:generalfuncteq}
Let $a\leq 1\leq b$. Then 
\[
\chi(\A_\Phi^{[a, b]}, (b-a+1)h-t)=
(-1)^\ell\chi(\A_\Phi^{[a, b]}, t). 
\]
\end{corollary}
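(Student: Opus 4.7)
The plan is to prove the functional equation by splitting the range of intervals $[a,b]$ with $a\leq 1\leq b$ according to the value of $a+b$ and invoking the appropriate previously established result in each case. A preliminary reduction is the reflection symmetry $H_{\alpha,k}=H_{-\alpha,-k}$, which gives the equality of arrangements $\A_\Phi^{[a,b]}=\A_\Phi^{[-b,-a]}$ (and preserves $b-a+1$); this identifies the functional equation for $[a,b]$ with that for $[-b,-a]$, so one may assume $a+b\geq 0$. The remaining cases are $a+b=0$, $a+b=1$, and $a+b\geq 2$.

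The first two cases reduce directly to Theorem \ref{thm:charpoly}. When $a+b=0$, we have $[a,b]=[-k,k]$ with $k\geq 1$, and part (i) gives $\chi(\A_\Phi^{[-k,k]},t)=\prod_{i=1}^\ell(t-e_i-kh)$; the duality of exponents $e_i+e_{\ell-i+1}=h$ (recalled in Remark \ref{rem:remonrh}(3)) then yields the desired equation with $(b-a+1)h=(2k+1)h$. When $a+b=1$, we have $[a,b]=[1-k,k]$, and part (ii) gives $\chi=(t-kh)^\ell$, from which the functional equation with $(b-a+1)h=2kh$ is immediate.

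The substantive case is $a+b\geq 2$. I would set $k=1-a\geq 0$ and $n=a+b-1\geq 1$, so that $[a,b]=[1-k,n+k]$ and $(b-a+1)h=(n+2k)h$. By Corollary \ref{cor:shift} (derived in the text from Theorem \ref{thm:extendedlinial} via the fact that $\rad(\widetilde{n})\mid h$, Theorem \ref{thm:suter}(v), makes the gcd with $\widetilde{n}$ invariant under $h$-shifts),
\[
\chi(\A_\Phi^{[a,b]},t)=\chi(\A_\Phi^{[1,n]},t-kh).
\]
Substituting $t\mapsto (n+2k)h-t$ on the left and using the same shift identity on the right reduces the claim to $\chi(\A_\Phi^{[1,n]},nh-s)=(-1)^\ell\chi(\A_\Phi^{[1,n]},s)$ with $s=t-kh$. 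This polynomial identity follows from Theorem \ref{thm:dualityquasichar}: for any $q$ coprime to $\widetilde{n}$, also $nh-q$ is coprime to $\widetilde{n}$ (again by Theorem \ref{thm:suter}(v)), so both $\chi_{\quasi}(\A_\Phi^{[1,n]},q)$ and $\chi_{\quasi}(\A_\Phi^{[1,n]},nh-q)$ equal the characteristic polynomial $g_1$ evaluated there, and the quasi-polynomial equality then forces the polynomial equality on an infinite set, hence as polynomials.

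The main obstacle is the careful bookkeeping of two layers: the $h$-shift that transports Linial-type results to arbitrary $[a,b]$, and the passage from quasi-polynomials (where Theorem \ref{thm:dualityquasichar} lives) back to ordinary characteristic polynomials. Both hinge on the arithmetic input $\rad(\widetilde{n})\mid h$, which must be invoked twice: once to justify the $h$-shift for $\chi$ itself (Corollary \ref{cor:shift}), and once to convert the quasi-polynomial functional equation into the honest polynomial identity by exhibiting infinitely many $q$ for which the substitutions $q\mapsto nh-q$ and $q\mapsto q-kh$ preserve $\gcd(\,\cdot\,,\widetilde{n})=1$.
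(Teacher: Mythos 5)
Your proof is correct and follows essentially the same route as the paper, which likewise obtains the corollary by combining Theorem \ref{thm:charpoly}, Theorem \ref{thm:extendedlinial} (via Corollary \ref{cor:shift}), and Theorem \ref{thm:dualityquasichar}, using $\rad(\widetilde{n})\mid h$ to preserve coprimality to $\widetilde{n}$ and thereby pass from the quasi-polynomial identities to the polynomial ones. Your explicit case split on $a+b$ and the reduction of $a+b<0$ to $a+b>0$ via the symmetry $\A_\Phi^{[a,b]}\cong\A_\Phi^{[-b,-a]}$ merely fill in details the paper leaves implicit.
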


\subsection{Partial results on the ``Riemann hypothesis''}
\label{subsec:resultsonrh}

We will prove the ``Riemann hypothesis'' for several cases 
in $\Phi=E_6, E_7, E_8$ and $F_4$. Recall that 
\[
\rad(\widetilde{n})=
\left\{
\begin{array}{ll}
6,& \mbox{for }\Phi=E_6, E_7, F_4\\
30,& \mbox{for }\Phi=E_8. 
\end{array}
\right.
\]

\begin{theorem}
\label{thm:rh}
Let $\Phi$ be either $E_6, E_7, E_8$ or $F_4$. 
Suppose 
\[
n\equiv -1 \mod\ \rad(\widetilde{n}). 
\]
Then each root of the equation 
$\chi(\A_\Phi^{[1,n]}, t)=0$ satisfies 
$\Re=\frac{nh}{2}$. 
\end{theorem}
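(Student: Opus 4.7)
The plan is to express $\chi(\A_\Phi^{[1,n]},t) = g(S)\,t^\ell$ for an explicit polynomial $g(S)\in\bR[S]$ of degree $nh$ whose roots all lie on the unit circle, and then invoke Lemma~\ref{lem:polya} with $f(t)=t^\ell$ (whose unique root $0$ has real part $0$).

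First I would upgrade the quasi-polynomial identity of Theorem~\ref{thm:linial} to a polynomial identity for $\chi$. Since $\widetilde n$ and $\rad(\widetilde n)$ have the same prime divisors, $\gcd(q,\widetilde n)=1 \iff \gcd(q,\rad(\widetilde n))=1$; the hypothesis $\rad(\widetilde n)\mid(n+1)$ then forces $\gcd(q-k(n+1),\widetilde n)=1$ for every $k\in\bZ$ whenever $\gcd(q,\widetilde n)=1$. Applying Theorem~\ref{thm:suter}(iv) termwise to $\chi_{\quasi}(\A_\Phi^{[1,n]},q)=\sum_j a_j\,\ehr_{\baralcov}(q-j(n+1))$, where $\geneul_\Phi(t)=\sum_j a_j t^j$, yields the polynomial identity
\[
\chi(\A_\Phi^{[1,n]},t) \;=\; \tfrac{f}{|W|}\,\geneul_\Phi(S^{n+1})\prod_{i=1}^\ell(t+e_i).
\]
Next I would combine the Lam--Postnikov factorization $\geneul_\Phi(S)=\prod_i[c_i]_S\,\eul_\ell(S)$ (Theorem~\ref{thm:lp}), the congruence $\eul_\ell(S^{n+1})\equiv\bigl(\tfrac{[n+1]_S}{n+1}\bigr)^{\ell+1}\eul_\ell(S)\pmod{(S-1)^{\ell+1}}$ (Proposition~\ref{prop:congruence}), and the generalized Worpitzky identity $\tfrac{f}{|W|}\prod_i[c_i]_S\,\eul_\ell(S)\prod_i(t+e_i)=t^\ell$ (Theorem~\ref{thm:worpart}). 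Because $(S-1)^{\ell+1}$ annihilates polynomials of degree $\le\ell$ (Proposition~\ref{prop:annihilate}), all these substitutions are legitimate on $\prod_i(t+e_i)$, producing
\[
\chi(\A_\Phi^{[1,n]},t) \;=\; g(S)\,t^\ell,\qquad g(S)\;:=\;\prod_{i=1}^\ell\frac{[c_i]_{S^{n+1}}}{[c_i]_S}\cdot\frac{[n+1]_S^{\ell+1}}{(n+1)^{\ell+1}}.
\]

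The critical verification is that $g(S)\in\bR[S]$ has degree exactly $nh$ and all its zeros on the unit circle. Writing $S^m-1=\prod_{d\mid m}\Phi_d(S)$, for $d\ge 2$ the $\Phi_d$-adic valuation of $g$ computes to
\[
\nu_d(g) \;=\; \sum_{i=1}^\ell[d\mid(n+1)c_i]\;+\;[d\mid n+1]\;-\;\sum_{i=1}^\ell[d\mid c_i].
\]
If $d\mid n+1$ then each summand $[d\mid(n+1)c_i]$ equals $1$, so $\nu_d(g)\ge \ell+1-\ell=1$; if $d\nmid n+1$ then the pointwise bound $[d\mid c_i]\le[d\mid(n+1)c_i]$ gives $\nu_d(g)\ge 0$. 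Hence $g\in\bR[S]$ and its roots are roots of unity, so lie on the unit circle. The degree computation $\sum_{i=1}^\ell(c_i-1)n+n(\ell+1)=nh$ uses $\sum_{i=1}^\ell c_i=h-1$ (Proposition~\ref{prop:rootsys}(i) combined with $c_0=1$). Lemma~\ref{lem:polya} applied to $g(S)$ and $f(t)=t^\ell$ finishes the proof.

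The principal obstacle is precisely this polynomiality of $g$: an individual ratio $[c_i]_{S^{n+1}}/[c_i]_S$ generically fails to be a polynomial whenever $\gcd(c_i,n+1)>1$, and this genuinely occurs for $E_7,E_8,F_4$ where some $c_i=4$ while $n+1$ is forced to be even by $\rad(\widetilde n)\mid(n+1)$. Polynomiality is rescued only at the global level by the factor $[n+1]_S^{\ell+1}$: for each ``bad'' cyclotomic divisor $d$ the multiplicity of $\Phi_d$ contributed by $[n+1]_S^{\ell+1}$ is exactly $\ell+1$, which strictly exceeds the worst possible deficit $\sum_{i=1}^\ell[d\mid c_i]\le\ell$.
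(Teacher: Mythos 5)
Your reduction collapses at the step where you invoke Theorem \ref{thm:worpart} as the polynomial identity $\frac{f}{|W|}\prod_i[c_i]_S\,\eul_\ell(S)\prod_i(t+e_i)=t^\ell$. Theorem \ref{thm:worpart} states $q^\ell=(\geneul_\Phi(S)\ehr_{\baralcov})(q)$ with the genuine Ehrhart \emph{quasi}-polynomial on the right; the shifts $q-j$ there run over $1\le j\le h-1$, hence land in residue classes not coprime to $\widetilde n$, so the non-coprime constituents of $\ehr_{\baralcov}$ genuinely contribute and cannot be replaced by the coprime constituent $\frac{f}{|W|}\prod_i(q+e_i)$. (Your polynomiality step for $\chi$ works precisely because there the shifts are multiples of $n+1$ and stay in coprime classes; no such luck here.) Concretely, for $B_2$ one has $\frac{f}{|W|}\geneul_{B_2}(S)(t+1)(t+3)=\frac14\bigl((S+2S^2+S^3)(t^2+4t+3)\bigr)=t^2-\frac12\neq t^2$, and your final formula then gives $\chi(\A_{B_2}^{[1,1]},t)=g(S)t^2=t^2-4t+\frac{11}{2}$, whereas the true value (computable from Theorem \ref{thm:linial}, or from Theorem \ref{thm:rhABCD}) is $t^2-4t+5$. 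The same defect is visible in Theorem \ref{thm:rhABCD} for all $B_\ell$ with $n$ odd: the correct operator carries a factor $4S$ where your $g$ carries $(1+S)^2$, and $(1+S)^2-4S=(S-1)^2$ does not annihilate $t^\ell$ for $\ell\ge2$. Nothing in your argument is special to $E_6,E_7,E_8,F_4$ (all of which have $\widetilde n>1$), so the same failure occurs there.

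What survives is the correct and useful bookkeeping: the polynomiality of $\chi(\A_\Phi^{[1,n]},t)$ under $\rad(\widetilde n)\mid n+1$, the Lam--Postnikov factorization, the congruence of Proposition \ref{prop:congruence}, and the cyclotomic-valuation argument for $g$. The paper uses exactly these ingredients but stops the reduction one step earlier: it writes $\chi(\A_\Phi^{[1,n]},t)$ as a product of cyclotomic operators applied to $\eul_\ell(S^{\rad(\widetilde n)})g(t)$, where $g$ is the coprime constituent of $\ehr_{\baralcov}$, and then verifies by \emph{explicit computation} that the roots of this base polynomial lie on the critical line, after which Lemma \ref{lem:polya} transports the conclusion to all $n\equiv-1\bmod\rad(\widetilde n)$. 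That finite check cannot be eliminated by your formal manipulation --- a purely formal proof of the kind you propose would establish these cases of Conjecture \ref{conj:rh} with no computation at all, and would simultaneously ``reprove'' Theorem \ref{thm:rhABCD} with an incorrect formula.
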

\begin{proof}
We give the proof only for the case $\Phi=E_6$. The proof for the other cases 
are similar. Let $n=6m-1$ ($m\in\bZ$). By Theorem \ref{thm:linial} 
$\chi_{\quasi}(\A_\Phi^{[1,6m-1]},q)=
\geneul_\Phi(S^{6m})\ehr_{\baralcov}(q)$ for $q\gg 0$. 
Set $g(t)=\frac{(t+1)(t+4)(t+5)(t+7)(t+8)(t+11)}{2^3\cdot 3\cdot 6!}$ 
and recall Example \ref{ex:ehrhartquasi} that 
if $q$ is prime to $\rad(\widetilde{n})=6$ then 
\[
\ehr_{\baralcov}(q)=g(q). 
\]
In this case $q-6k$ is also relatively prime to $6$. Hence 
\[
\chi_{\quasi}(\A_\Phi^{[1,6m-1]},q)=
\geneul_\Phi(S^{6m})g(q). 
\]
Thus we have a formula for the characteristic polynomial. 
\[
\chi(\A_\Phi^{[1,6m-1]},t)=
\geneul_\Phi(S^{6m})g(t). 
\]
Set $c(t)=[2]_t^3\cdot [3]_t$. 
Using the formula proved by Lam and Postnikov 
(Theorem \ref{thm:lp}), 
$\geneul_{E_6}(t)=c(t)\cdot\eul_6(t)$. Hence 
\[
\chi(\A_\Phi^{[1,6m-1]},t)=
c(S^{6m})\eul_6(S^{6m})g(t). 
\]
Now we employ Proposition \ref{prop:congruence}; replacing 
$S$ by $S^6$, we have 
\[
\eul_6(S^{6m})
\equiv
\left(
\frac{1+S^{6}+S^{12}+\dots+S^{6(m-1)}}{m}
\right)^7
\eul_{6}(S^6) \mod (S^6-1)^7. 
\]
Therefore, using Proposition \ref{prop:annihilate}, 
$\chi(\A_\Phi^{[1,6m-1]},t)$ can be written as 
\[
c(S^{6m})
\left(
\frac{1+S^{6}+S^{12}+\dots+S^{6(m-1)}}{m}
\right)^7
\eul_{6}(S^6)
g(t). 
\]
The first two factors are clearly cyclotomic polynomials 
in $S$. In view of Lemma \ref{lem:polya}, it is sufficient 
to check $\eul_6(S^6)g(t)$ satisfies the Riemann hypothesis. 
The explicit computation of $\eul_6(S^6)g(t)$ (up to constant 
factor) gives 
\[
29288834-8855550 t+1159185 t^2-84600 t^3+3660 t^4-90 t^5+t^6. 
\]
We can check by explicit computation that 
the six complex roots of this polynomial have common 
real part $15$. 
\end{proof}

\medskip

\noindent
\textbf{Acknowledgements:} 
The author was partially supported by 
the Grant-in-Aid for Scientific Research (C) 25400060, JSPS.


\begin{thebibliography}{999}

\bibitem{ath-adv}
C. A. Athanasiadis, 
Characteristic polynomials of subspace arrangements and finite fields. 
\emph{Adv. Math.} \textbf{122} (1996), no. 2, 193--233. 

\bibitem{ath-survey}
C. A. Athanasiadis, 
Deformations of Coxeter hyperplane arrangements and 
their characteristic polynomials.  
Arrangements---Tokyo 1998,  1--26,
Adv. Stud. Pure Math., 27, Kinokuniya, Tokyo, 2000. 

\bibitem{ath-lin}
C. A. Athanasiadis, 
Extended Linial hyperplane arrangements for root systems and a
conjecture of Postnikov and Stanley.  
J. Algebraic Combin. 10 (1999), no. 3, 207--225. 

\bibitem{ath-gen}
C. A. Athanasiadis, 
Generalized Catalan numbers, Weyl groups and arrangements 
of hyperplanes. 
Bull. London Math. Soc. {\bf 36} (2004), no. 3, 294--302. 

\bibitem{be-ro}
M. Beck and S. Robins, 
Computing the continuous discretely. Integer-point enumeration in polyhedra. 
Undergraduate Texts in Mathematics. 
\emph{Springer, New York,} 2007. xviii+226 pp. 

\bibitem{bl-sa}
A. Blass and B. Sagan, 
Characteristic and Ehrhart polynomials. 
\emph{J. Algebraic Combin.} \textbf{7} (1998), no. 2, 115-126. 

\bibitem{bri-tress}
E. Brieskorn, 
Sur les groupes de tresses [d'apr\`es V. I. Arnol'd]. (French) S\'eminaire Bourbaki, 24\`eme ann\'ee (1971/1972), Exp. No. 401, pp. 21-44. Lecture Notes in Math., Vol. 317, Springer, Berlin, 1973. 

\bibitem{comtet}
L. Comtet, 
Advanced combinatorics. 
The art of finite and infinite expansions. Revised and enlarged edition. 
\emph{D. Reidel Publishing Co., Dordrecht}, 1974. xi+343 pp.

\bibitem{ede-rei}
P. H. Edelman and V. Reiner, 
Free arrangements and rhombic tilings. 
Discrete Comput. Geom.  {\bf 15}  (1996),  no. 3, 307--340. 

\bibitem{hai}
M. Haiman, 
Conjectures on the quotient ring by diagonal invariants. 
\emph{J. Algebraic Combin.} \textbf{3} (1994), no. 1, 17-76. 

\bibitem{hir-eu}
F. Hirzebruch, 
Eulerian polynomials. 
\emph{M\"unster J. Math.} \textbf{1} (2008), 9-14. 

\bibitem{hum}
J. E. Humphreys, 
Reflection groups and Coxeter groups. 
Cambridge Studies in Advanced Mathematics, 29. 
\emph{Cambridge University Press, Cambridge}, 1990. xii+204 pp.

\bibitem{ktt-cent}
H. Kamiya, A. Takemura and H. Terao, 
Periodicity of hyperplane arrangements with integral coefficients 
modulo positive integers. 
\emph{J. Algebraic Combin.} \textbf{27} (2008), no. 3, 317--330. 

\bibitem{ktt-noncent}
H. Kamiya, A. Takemura and H. Terao, 
Periodicity of non-central integral arrangements modulo positive integers. 
\emph{Ann. Comb.} \textbf{15} (2011), no. 3, 449--464. 

\bibitem{ktt-quasi}
H. Kamiya, A. Takemura and H. Terao, 
The characteristic quasi-polynomials of the arrangements of 
root systems and mid-hyperplane arrangements. 
\emph{Arrangements, local systems and singularities}, 177--190, 
\textbf{Progr. Math., 283}, \emph{Birkh\"auser Verlag, Basel}, 2010. 

\bibitem{lp-alc2}
T. Lam and A. Postnikov, 
Alcoved polytopes II. 
arXiv preprint arXiv:1202.4015 (2012). 

\bibitem{os}
P. Orlik and L. Solomon, 
Combinatorics and topology of complements of hyperplanes. 
\emph{Invent. Math.} {\bf 56} (1980), 167--189. 

\bibitem{ot} 
P. Orlik and H. Terao, Arrangements of hyperplanes. 
Grundlehren der Mathematischen Wissenschaften, 300. 
Springer-Verlag, Berlin, 1992. xviii+325 pp.

\bibitem{ps-def}
A. Postnikov and R. Stanley, 
Deformations of Coxeter hyperplane arrangements. 
J. Combin. Theory Ser. A {\bf 91} (2000), no. 1-2, 544--597. 

\bibitem{shi-kl}
J. -Y. Shi, 
The Kazhdan-Lusztig cells in certain affine Weyl groups. 
Lecture Notes in Mathematics, 1179. \emph{Springer-Verlag, Berlin}, 
1986. x+307 pp.

\bibitem{st-lect}
R. Stanley, 
An introduction to hyperplane arrangements. 
Geometric combinatorics, 389--496, 
IAS/Park City Math. Ser., 13, 
\emph{Amer. Math. Soc., Providence, RI}, 2007. 

\bibitem{st-ec1}
R. Stanley, 
Enumerative combinatorics. Volume 1. Second edition. 
Cambridge Studies in Advanced Mathematics, 49. 
\emph{Cambridge University Press}, Cambridge, 2012. xiv+626 pp.

\bibitem{sut}
R. Suter, 
The number of lattice points in alcoves and the exponents of the 
finite Weyl groups. 
\emph{Math. Comp.} \textbf{67} (1998), no. 222, 751--758. 

\bibitem{ter-fact}
H. Terao, 
Generalized exponents of a free arrangement of hyperplanes and
Shepherd-Todd-Brieskorn formula.  Invent. Math.  {\bf 63}  
(1981), no. 1,
159--179. 

\bibitem{ter-multi}
H. Terao, 
Multiderivations of Coxeter arrangements.  
\emph{Invent. Math.}  {\bf 148}  (2002),  no. 3, 659--674. 

\bibitem{wor}
J. Worpitzky, 
Studien \"uber die Bernoullischen und Eulerischen Zahlen. 
\emph{J. reine angew. Math.} \textbf{94} (1883), 203--232.

\bibitem{yos-char}
M. Yoshinaga, 
Characterization of a free arrangement and conjecture of Edelman and Reiner.  
\emph{Invent. Math.}  {\bf 157}  
(2004),  no. 2, 449--454.

\bibitem{zas-face}
T. Zaslavsky, 
Facing up to arrangements: Face-count formulas for 
partitions of space by hyperplanes. 
\emph{Memoirs Amer. Math. Soc.} 1 (1975), no. 154, vii+102 pp. 

\end{thebibliography}
\end{document}